\numberwithin{equation}{section}
\newtheorem{thm}{Theorem}[section]
\newtheorem{lem}[thm]{Lemma}
\newtheorem{cor}[thm]{Corollary}
{\bf}{\it}
\newtheorem{fthm}{Theorem}{\bf}{\it}
{\bf}{\it}
{\bf}{\it}
{\bf}{\it}
{\bf}{\it}
\theoremstyle{definition}
\newtheorem{defn}[thm]{Definition}
\theoremstyle{remark}
\newtheorem{rem}[thm]{Remark}
{\bf}{\it}
\newtheorem{definition and corollary}[thm]{Definition and Corollary}
\newcommand{\al}{\alpha}
\newcommand{\af}{\mathrm{af}}
\newcommand{\C}{{\mathbb C}}
\newcommand{\bC}{{\mathbf C}}
\newcommand{\cO}{{\mathcal O}}
\newcommand{\Hom}{\mbox{\rm Hom}}
\newcommand{\codim}{\mathrm{codim}}
\newcommand{\bI}{{\mathbf I}}
\newcommand{\ch}{\mathrm{ch}}
\newcommand{\gch}{\mathrm{gch}}
\newcommand{\la}{\lambda}
\newcommand{\lo}{\mathrm{loc}}
\newcommand{\Gr}{\mathrm{Gr}}
\newcommand{\ra}{\mathrm{rat}}
\newcommand{\pt}{\mathrm{pt}}
\newcommand{\g}{\mathfrak{g}}
\newcommand{\gb}{\mathfrak{b}}
\newcommand{\sB}{\mathscr B}
\newcommand{\sGB}{\mathscr{GB}}
\newcommand{\tI}{\mathtt{I}}
\newcommand{\tJ}{\mathtt{J}}
\newcommand{\bO}{\mathbb{O}}
\renewcommand{\P}{\mathbb{P}}
\newcommand{\bQ}{\mathbf{Q}}
\newcommand{\sQ}{\mathscr{Q}}
\newcommand{\sX}{\mathscr{X}}
\newcommand{\Q}{\mathbb{Q}}
\newcommand{\R}{\mathbb{R}}
\newcommand{\bX}{\mathbb{X}}
\newcommand{\Z}{\mathbb{Z}}
\newcommand{\Gm}{\mathbb G_m}
\title{On quantum $K$-groups of partial flag manifolds\footnote{MSC2010: 14N15,20G44}\footnote{Keywords: semi-infinite flag manifold, quantum \(K\)-group, partial flag manifold}}
\author{Syu \textsc{Kato}\footnote{Department of Mathematics, Kyoto University, Oiwake Kita-Shirakawa Sakyo Kyoto 606-8502 JAPAN \tt{E-mail:syuchan@math.kyoto-u.ac.jp}}}
\begin{document}

\maketitle

\begin{abstract}
We show that the equivariant small quantum $K$-group of a partial flag manifold is a quotient of that of the full flag manifold in a way that respects the Schubert classes. This is a $K$-theoretic analogue of the parabolic version of Peterson's theorem [Lam-Shimozono, Acta Math. {\bf 204} (2010)] that exhibits a different behavior from the case of quantum cohomology. Our quotient maps send some of the Novikov variables to $1$, and the geometric meaning of this specialization is unclear in quantum $K$-theory.
\end{abstract}
\renewcommand{\abstractname}{R\'esum\'e}
\begin{abstract}
Nous montrons que la petite \(K\)-th\'eorie quantique \'equivariante d'une vari\'et\'e de drapeaux partielle est un quotient de celle de la vari\'et\'e de drapeaux compl\`ete, de mani\`ere compatible avec les classes de Schubert. Il s'agit d'un analogue en \(K\)-th\'eorie du th\'eor\`eme de Peterson dans le cadre parabolique [Lam-Shimozono, Acta Math. {\bf 204} (2010)], dont le comportement diff\`ere toutefois de celui de la cohomologie quantique. Nos morphismes de quotient sp\'ecialisent certaines variables de Novikov \`a \(1\), mais la signification g\'eom\'etrique de cette sp\'ecialisation demeure obscure en \(K\)-th\'eorie quantique.
\end{abstract}

\section*{Introduction}
Let $G$ be a connected, simply connected, simple algebraic group over $\C$ with a maximal torus $H$ and a Borel subgroup $B$ that contains $H$. For each parabolic subgroup $P \subset G$ that contains $B$, the associated partial flag manifold is denoted by $G / P$. Let $\Gr$ denote the affine Grassmannian of $G$. In this paper, we describe the $H$-equivariant small quantum $K$-group $qK_H ( G / P )$ of $G / P$ as a quotient of the $H$-equivariant small quantum $K$-group $qK_H ( G / B )$ of $G / B$.

Peterson~\cite{Pet97} (on quantum cohomology) claimed that the ring structure of $qH_H(G/P)$ can be recovered from the equivariant homology ring $H_*^H(\Gr)$ with the Pontryagin product, and Lam--Shimozono~\cite{LS10} later established a precise form of this assertion. In this setting, the existence of a ring surjection $qH_H ( G / B ) \twoheadrightarrow qH_H ( G / P )$ follows from a detailed analysis, as demonstrated in~\cite{LL10}.

In~\cite{Kat18c,Kat18d}, we investigated the $K$-theoretic analogue of the aforementioned relation by utilizing the equivariant $K$-theory of a semi-infinite flag manifold (cf.~\cite{KNS17}) as a mediating object, following an idea of Givental~\cite{Giv94}. This perspective offers a clearer understanding of the relationship between the quantum $K$-groups $qK_H ( G / P )$ for varying $P$ via the pushforward maps along morphisms between semi-infinite flag manifolds.

Exploiting this perspective, we prove the following.

\begin{fthm}[$\doteq$ Theorem~\ref{qKquot}]\label{fqKsurj}
There exists a surjective morphism
\[
qK_H ( G / B ) \twoheadrightarrow qK_H ( G / P )
\]
of $K_H(\pt)$-algebras that sends each Schubert class to a Schubert class. Moreover, if $B \subset P' \subset P$ is an intermediate standard parabolic subgroup, then this morphism factors through $qK_H ( G / P' )$.
\end{fthm}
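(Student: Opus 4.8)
The plan is to establish the theorem via the equivariant $K$-theory of semi-infinite flag manifolds, extending the framework of \cite{Kat18c,Kat18d}. Write $\bQ_G$ for the semi-infinite flag manifold attached to $G$ and $\bQ_P$ for its parabolic counterpart attached to $P$, the latter carrying semi-infinite Schubert varieties $\bQ_P(x)$ indexed by $W^P$ and the monoid of effective curve classes on $G/P$. Recall from \cite{Kat18c,Kat18d} that, after inverting the Novikov variables, $qK_H(G/B)$ is isomorphic as an $R(H)$-algebra to $K_H(\bQ_G)$ equipped with the ring structure described there, the Schubert basis element $[\cO_{X_w}]\,Q^\beta$ corresponding to the structure sheaf $[\cO_{\bQ_G(wt_\beta)}]$ and $Q^\beta$ being read off from the degree grading on $\bQ_G$. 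The first task is the parabolic dictionary: rerunning the same arguments — normality and rational singularities of the $\bQ_P(x)$, the pertinent Frobenius-splitting and cohomology-vanishing statements, and the reconstruction of $qK_H(G/P)$ out of $\bQ_P$ in the spirit of Givental \cite{Giv94} and \cite{Kat18c,Kat18d} — to obtain an isomorphism $qK_H(G/P)\big[\,Q_i^{-1}:i\notin I_P\,\big]\cong K_H(\bQ_P)$ matching Schubert bases. If this is not already on record it is the technical prerequisite for the whole argument, and I expect it to be the first real obstacle.

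Next I would construct the projection $\varpi\colon\bQ_G\to\bQ_P$ induced by $G/B\twoheadrightarrow G/P$. It is proper, and its fibres are semi-infinite analogues of iterated $\P^1$-bundles (quasi-map spaces into $L/B_L$-type fibres, $L$ a Levi of $P$), so that $R\varpi_*\cO_{\bQ_G}=\cO_{\bQ_P}$; moreover $\varpi$ carries $\bQ_G(wt_\beta)$ onto $\bQ_P(\overline{wt_\beta})$ with cohomologically trivial generic fibre, whence $\varpi_*[\cO_{\bQ_G(wt_\beta)}]=[\cO_{\bQ_P(\overline{wt_\beta})}]$. On curve classes $\varpi$ realizes the surjection $H_2(G/B)\to H_2(G/P)$, so the induced map of Novikov rings sends $Q_i\mapsto Q_i$ for $i\notin I_P$ and $Q_i\mapsto 1$ for $i\in I_P$, which accounts for the specialization in the statement (and, following the disclaimer in the introduction, I would not try to interpret it geometrically). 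The pushforward $\varpi_*\colon K_H(\bQ_G)\to K_H(\bQ_P)$ is then surjective because $\varpi^*$ is a right inverse on $\cO$-modules ($\varpi_*\varpi^*=\id$ by the projection formula together with $R\varpi_*\cO_{\bQ_G}=\cO_{\bQ_P}$), it sends the Schubert basis onto the Schubert basis by the previous identity, and, since pushforward preserves effectivity, it becomes a quotient map of based rings once multiplicativity is established.

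Proving that $\varpi_*$ is an algebra map is the heart of the matter, since ordinary $K$-theoretic pushforward is not multiplicative. Here one uses the explicit description of the product on $K_H(\bQ_G)$ from \cite{Kat18c,Kat18d} together with the compatibility of $\varpi$ with the intersection and degeneration patterns of semi-infinite Schubert varieties: generically $[\cO_{\bQ_G(u)}]\cdot[\cO_{\bQ_G(v)}]$ is the class of a transversal-type intersection which $\varpi$ maps, with cohomologically trivial fibres, onto the corresponding intersection in $\bQ_P$, the error terms being controlled by the same vanishing theorems. A more economical route is to check the identity on algebra generators: $qK_H(G/B)$ is generated over $R(H)[Q_i^{\pm}]$ by classes of line bundles, and one matches the quantum $K$-theoretic Chevalley–Pieri formula of \cite{Kat18d} for $G/B$ with that for $G/P$ after the substitution $Q_i\mapsto 1$ $(i\in I_P)$. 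The finiteness that this substitution presupposes — that the structure constants of $qK_H(G/P)$ stay finite, so that $\varpi_*$ lands in the honest based ring rather than a completion — is furnished by the parabolic dictionary of the first step together with properness of $\varpi$ over each fixed target degree. I therefore expect the two genuine obstacles to be the parabolic dictionary $qK_H(G/P)\cong K_H(\bQ_P)$ and this combination of multiplicativity with finiteness.

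The factorization through $qK_H(G/P')$ for an intermediate parabolic $B\subset P'\subset P$ is then formal: $\varpi$ decomposes as $\bQ_G\to\bQ_{P'}\xrightarrow{\ \varpi'\ }\bQ_P$, so $\varpi_*=\varpi'_*\circ(\bQ_G\to\bQ_{P'})_*$, and by the construction above each factor is one of the algebra quotient maps produced for the pairs $B\subset P'$ and $P'\subset P$.
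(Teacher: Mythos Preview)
Your outline follows the paper's strategy: mediate through semi-infinite flag manifolds, establish the parabolic dictionary $qK_H(G/P)_\lo\cong K'_H(\bQ_P^{\mathrm{rat}})$, and descend the projection to an algebra map. The factorization through intermediate parabolics is indeed formal in both treatments.

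Where you diverge is in execution, and the differences matter. You frame the projection $\varpi_*$ as an honest $K$-theoretic pushforward and propose to justify $\varpi_*[\cO_{\bQ_G(wt_\beta)}]=[\cO_{\bQ_P(\overline{wt_\beta})}]$ via fibre geometry ($R\varpi_*\cO=\cO$, iterated $\P^1$-bundle fibres, etc.). The paper avoids this entirely: its $K'_{H\times\Gm}(\bQ_\tJ^{\mathrm{rat}})$ is a \emph{formal} $\C_qP$-module on Schubert symbols, and the map $\phi_\tJ$ is defined tautologically on basis elements. The geometric content you want is replaced by Theorem~\ref{coh-si}, which says the graded characters of $H^0(\bQ(w),\cO(\la))$ and $H^0(\bQ_\tJ([w]_\tJ),\cO(\la))$ agree for $\la\in P_\tJ$ and vanish otherwise --- this is the algebraic shadow of your $R\varpi_*\cO=\cO$, but sidesteps the genuine difficulties of pushforward on ind-infinite schemes.

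For multiplicativity the paper does not use transversality of semi-infinite Schubert intersections (your first suggestion, which would be delicate to make rigorous here) and does not literally match Chevalley formulas (your second). Instead it proves Lemma~\ref{i-ideal}: the kernel of $\phi_\tJ$ is the span of the subspaces $K_i$ ($i\in\tJ$), and each $K_i$ is stable under $A_j(q)$ for $j\neq i$ because, via the embedding of Theorem~\ref{ch-inj}, $K_i$ is exactly the set of classes whose associated functional vanishes on $P_{\{i\}}$. This stability makes $\ker\phi_\tJ$ an ideal of the subring $qK_H(\sB)_{\ge 0}$ generated by the line-bundle operators. The passage from ``ideal for line-bundle multiplication'' to ``algebra map on all of $qK_H(\sB)$'' then uses the observation of Buch--Chaput--Mihalcea--Perrin \cite{BCMP17} that the $\star$-product is completely determined by multiplication by the $[\cO(-\varpi_i)]$. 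This is cleaner than either of your proposed routes and is the step you are most likely to underestimate.
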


The same proof also works for the noncommutative variant (Corollary~\ref{qqKquot}). In the course of the proof of Theorem~\ref{fqKsurj}, we also prove a few results (Theorem~\ref{Jrat}, Theorem~\ref{qKcomp}, and Corollary~\ref{exact}) that are basic to the geometry of (the parabolic version of) semi-infinite flag manifolds and the associated Richardson varieties.

We emphasize that the existence of this map is inherently of quantum origin and does \emph{not} specialize to give an algebra map $K_H ( G / B ) \to K_H ( G / P )$. Notably, our map specializes certain Novikov variables to $1$. In the quantum cohomology setting, the corresponding constructions amount to specializing certain Novikov variables to $0$; see~\cite{LS10,LL10}. (In particular, this procedure makes sense only in the presence of the finiteness of the quantum $K$-groups~\cite{ACT18,Kat18c}.) Our algebra map has a hybrid nature, combining features of~\cite{LS10} and~\cite{LL10}, whose exact meaning is unclear at the moment. By setting $P=G$ in Theorem~\ref{fqKsurj} (so that $G/P=\pt$), we obtain, for each $P'$, the ring morphism
\[
qK_H(G/P') \to qK_H(\pt) = K_H(\pt)
\]
as presented in Buch--Chung--Li--Mihalcea~\cite[Corollary~10]{BCLM19}.

In light of the $K$-theoretic version of the Peterson isomorphism (conjectured in~\cite{LLMS17} and proved in~\cite[Corollary~C]{Kat18c}), we also obtain a surjective morphism
\begin{equation}
K_H ( \Gr )_\lo  \twoheadrightarrow qK_H ( G / P )_\lo\label{fKP}
\end{equation}
of suitably localized $K_H(\pt)$-algebras (Theorem~\ref{tcomp}). This morphism also sends each Schubert class to a Schubert class, up to a Novikov monomial twist, and hence reinforces the theme developed in~\cite{BM11,LLMS17,BCMP17} and the references therein.

We emphasize that the explicitness of Theorem~\ref{fqKsurj} and~\eqref{fKP} allows us to transfer various multiplication formulas for $qK_H(G/B)$ to the setting of $qK_H(G/P)$; see, for instance,~\cite{LSS10,KNS17}.

The structure of this paper is as follows. In~\S1, we present preliminary results, including equivariant quantum $K$-theory and quasi-map spaces. In~\S2, we cite results from~\cite{Kat18c,Kat18d} to establish that certain Schubert varieties of parabolic quasi-map spaces have rational singularities (Theorem~\ref{Jrat}). We also introduce variants of the equivariant $K$-group $K_H(\bQ_{\tJ}^\ra)$ of the semi-infinite (partial) flag manifold $\bQ_{\tJ}^\ra$, which differ from those in~\cite{KNS17,Kat18c} and are more suited to our purposes (Theorem~\ref{ch-inj} and Theorem~\ref{qKcomp}). In addition, we derive subtle identities arising from tensor products of line bundles (Corollary~\ref{exact}). These reformulations allow us to deduce the equality of structure constants in Theorem~\ref{qKquot} using key observations made in this paper (Lemma~\ref{i-ideal}), together with~\cite{CKS08,BCMP17}. Apart from these new developments, our overall approach follows that of~\cite{Kat18c} with necessary modifications, although we have aimed to present it from a slightly different perspective. We also provide an example for $G=\mathop{SL}(3)$ in~\S\ref{example}.

The work presented here was announced in~\cite{Kat21} and can be seen as a continuation of~\cite{Kat18c,Kat18d}. After the initial version of this paper appeared on arXiv, another proof of Theorem~\ref{fqKsurj} by a completely different method appeared in~\cite{CL22}.

\section{Preliminaries}\label{sec:prelim}
Throughout this paper, a vector space always refers to a $\C$-vector space, and a graded vector space refers to a $\Z$-graded vector space whose graded pieces are finite-dimensional and whose grading is bounded from above. Tensor products are taken over $\C$ unless stated otherwise. We define the graded dimension of a graded vector space as
\[
\mathrm{gdim} \, M := \sum_{i\in \Z} q^i \dim _{\C} M_i \in \Q (\!(q^{-1})\!).
\]
For notational convenience, we set $\C_q^{0} := \C [q^{-1}]$, $\C_{q} := \C [q,q^{-1}]$, and $\bC_{q} := \C (\!(q^{-1})\!)$. As a rule, we suppress $\emptyset$ and associated parentheses in our notation. This convention particularly applies to $\emptyset = \tJ \subset \tI$ frequently used to specify parabolic subgroups.

\subsection{Groups, root systems, and Weyl groups}
We refer the reader to~\cite{CG97, Kum02} for precise expositions of general material presented in this subsection.

Let $G$ be a connected, simply connected simple algebraic group of rank $r$ over $\C$, and let $B$ and $H$ be a Borel subgroup and a maximal torus of $G$ such that $H \subset B$. We set $N$ $(= [B,B])$ to be the unipotent radical of $B$. We denote the Lie algebra of an algebraic group by the corresponding German lowercase letter. The (finite) Weyl group is given by $W := N_G ( H ) / H$. For an algebraic group $E$, we denote its set of $\C [\![z]\!]$-valued points by $E [\![z]\!]$, and its set of $\C (\!(z)\!)$-valued points by $E (\!(z)\!)$ and similarly for other cases. Let $\mathbf I \subset G [\![z]\!]$ be the preimage of $B \subset G$ via the evaluation at $z = 0$ (the Iwahori subgroup of $G [\![z]\!]$). By a slight abuse of notation, we may consider $\bI$ and $G [\![z]\!]$ as group schemes over $\C$ whose $\C$-valued points are given by these.

Let $P := \Hom(H,\Gm)$ be the weight lattice of $H$, and let $\Delta \subset P$ be the set of roots. (Throughout the main body of this paper, we reserve the symbol $P$ without an argument for the weight lattice; our parabolic subgroups will be denoted by symbols of the form $P(\tJ)$.) Denote by $\Delta_+ \subset \Delta$ the set of positive roots, i.e., those that correspond to root subspaces within $\gb$, and by $\Pi \subset \Delta _+$ the set of simple roots. Each $\al \in \Delta_{+}$ defines a reflection $s_\al \in W$. Let $Q^{\vee}$ be the dual lattice of $P$ with a natural pairing $\left< \bullet, \bullet \right> : Q^{\vee} \times P \rightarrow \Z$. Denote by $\Pi^{\vee} \subset Q ^{\vee}$ the set of positive simple coroots, and let $Q_+^{\vee} \subset Q ^{\vee}$ be the non-negative integer span of $\Pi^{\vee}$. For $\beta, \gamma \in Q^{\vee}$, we write $\beta \ge \gamma$ if and only if $\beta - \gamma \in Q^{\vee}_+$. We denote the set of dominant weights by $P_+ := \{ \lambda \in P \mid \left< \alpha^{\vee}, \lambda \right> \ge 0, \hskip 2mm \forall \alpha^{\vee} \in \Pi^{\vee} \}$ and the set of strictly dominant weights by $P_{++} := \{ \lambda \in P \mid \left< \alpha^{\vee}, \lambda \right> > 0, \hskip 2mm \forall \alpha^{\vee} \in \Pi^{\vee} \}$. Let $\tI := \{1,2,\ldots ,r\}$. We fix bijections $\tI \cong \Pi \cong \Pi^{\vee}$ such that $i \in \tI$ corresponds to $\alpha_i \in \Pi$, its coroot $\alpha_i^{\vee} \in \Pi ^{\vee}$, and a simple reflection $s_i = s_{\al_i} \in W$. Let $\{\varpi_i\}_{i \in \tI} \subset P_+$ be the set of fundamental weights, characterized by the relations $\left< \al_i^{\vee}, \varpi_j \right> = \delta_{ij}$.

For a subset $\tJ \subset \tI$, we define $P ( \tJ )$ as the standard parabolic subgroup of $G$ corresponding to $\tJ$. Specifically, we have the inclusions $\gb \subset \mathfrak p (\tJ) \subset \g$, and $\mathfrak p (\tJ)$ contains the root subspace corresponding to $- \alpha_i$ ($i \in \tI$) if and only if $i \in \tJ$. We define the complement of $\tJ$ as $\tJ^c := \tI \setminus \tJ$. Then, the set of characters of $P ( \tJ )$ is identified with $P_{\tJ} := \sum_{i \in \tJ^c} \Z \varpi_i$. We also set $P_{\tJ, +} := \sum_{i \in \tJ^c} \Z_{\ge 0} \varpi_i = ( P_+ \cap P_{\tJ} )$, $\rho_{\tJ} := \sum_{i \in \tJ^c} \varpi_i$, and $P_{\tJ, ++} := ( \rho_{\tJ} + P_{\tJ,+} )$. We set $Q^{\vee} _{\tJ} := \sum_{i \in \tJ^c} \Z \al_i^{\vee}$ and $Q^{\vee} _{\tJ, +} := \sum_{i \in \tJ^c} \Z_{\ge 0} \al_i^{\vee}$. (In the notation of~\cite[\S2.2]{KNS17}, our $Q^{\vee}_{\tJ}$ is denoted by $Q^{\vee}_{\tJ^c}$.) Next, we define $W_\tJ \subset W$ to be the reflection subgroup generated by $\{s_i\}_{i \in \tJ}$. This subgroup is the Weyl group of the semisimple quotient of $P ( \tJ )$.

Let $\Delta_{\af} := \Delta \times \Z \delta \cup \{m \delta \mid m \neq 0 \}$ be the untwisted affine root system of $\Delta$, with its positive part $\Delta_{\af, +}$ such that $\Delta_+ \subset \Delta_{\af, +}$. We set $\alpha_0 := - \vartheta + \delta$, $\Pi_{\af} := \Pi \cup \{ \alpha_0 \}$, and $\tI_{\af} := \tI \cup \{ 0 \}$, where $\vartheta$ is the highest root of $\Delta_+$. We set $W _{\af} := W \ltimes Q^{\vee}$ and call it the affine Weyl group. It is a reflection group generated by $\{s_i \mid i \in \tI_{\af} \}$, where $s_0$ is the reflection with respect to $\alpha_0$. Let $\ell : W_\af \rightarrow \Z_{\ge 0}$ be the length function and let $w_0 \in W$ be the longest element in $W \subset W_\af$. Together with the normalization $t_{- \vartheta^{\vee}} := s_{\vartheta} s_0$ (for the coroot $\vartheta^{\vee}$ of $\vartheta$), we introduce the translation element $t_{\beta} \in W _{\af}$ for each $\beta \in Q^{\vee}$. By a slight abuse of notation, we denote by $W / W_{\tJ}$ the set of minimal length $W_{\tJ}$-coset representatives within $W$.

Let $W_\af^-$ denote the set of minimal length representatives of $W_\af / W$ within $W_\af$. We define
\[
Q^{\vee}_< := \{\beta \in Q^{\vee} \mid \left< \beta, \al_i \right> < 0, \forall i \in \tI \}.
\]

For each $\la \in P_+$, let $L ( \la )$ denote the irreducible $G$-module with a highest $B$-weight $\la$, i.e. $L ( \la )$ has a $B$-eigenvector of $H$-weight $\la$. For a semisimple $H$-module $V$, we set
\[
\ch \, V := \sum_{\la \in P} e^\la \cdot \dim \mathrm{Hom}_H ( \C_\la, V ).
\]
If $V$ is a $\Z$-graded $H$-module in addition, then we define the graded character as
\[
\gch \, V := \sum_{\la \in P, n \in \Z} q^n e^\la \cdot \dim  \mathrm{Hom}_H ( \C_\la, V_n ).
\]

Let $\sB_{\tJ} := G / P ( \tJ )$ denote the (partial) flag manifold of $G$ associated with $\tJ$. The Bruhat decomposition of $\sB_{\tJ}$ is given by
\begin{equation}
\sB_{\tJ} = \bigsqcup _{u \in W / W_{\tJ}} \bO_{\tJ} ( u )\label{Bdec}
\end{equation}
into $B$-orbits such that $\codim_{\sB_{\tJ}} \, \bO_{\tJ} ( u ) = \ell ( u )$ for each $u\in W / W_\tJ \subset W_\af$. We set $\sB_{\tJ} ( u ) := \overline{\bO_{\tJ} ( u )} \subset \sB_{\tJ}$.

For each $\la \in P_{\tJ}$, we have a line bundle $\cO _{\sB_{\tJ}} ( \la )$ such that
\[
H ^0 ( \sB_{\tJ}, \cO_{\sB_{\tJ}} ( \la ) ) \cong L ( - w_0 \la ), \hskip 3mm \cO_{\sB_{\tJ}} ( \la ) \otimes_{\cO_{\sB_{\tJ}}} \cO_{\sB_{\tJ}} ( - \mu ) \cong \cO_{\sB_{\tJ}} ( \la - \mu ) \hskip 5mm \la, \mu \in P_{\tJ,+}.
\]

For each $u \in W / W_{\tJ}$, let $p_u \in \bO_{\tJ} ( u )$ be the unique $H$-fixed point. We normalize $p_u$ (and hence $\bO_{\tJ} ( u )$) such that the restriction of $H ^0 ( \sB_{\tJ}, \cO_{\sB_{\tJ}} ( \la ) )$ to $p_u$ is isomorphic to $\C_{- w_0 u \la}$ for every $\la \in P_{\tJ,+}$. Note that this convention differs from~\cite{Kat18c} by replacing $\la$ with $-w_0\la$. This change of convention also applies to $Q^{\vee}$ in \S\ref{sec:QM} to ensure consistency in the degree counts in Theorem~\ref{Dr}.

\subsection{Quasi-map spaces}\label{sec:QM}

Here we recall basics of quasi-map spaces from~\cite{FM99,FFKM,Kat18d}.

We have isomorphisms $H^2 ( \sB_{\tJ}, \Z ) \cong P_{\tJ}$ and $H_2 ( \sB_{\tJ}, \Z ) \cong Q ^{\vee}_{\tJ}$. This identifies the (integral points of the) nef cone of $\sB_{\tJ}$ with $P_{\tJ,+} \subset P_{\tJ}$ and the effective cone of $\sB_{\tJ}$ with $Q_{\tJ,+}^{\vee}$. A quasi-map $( f, D )$ consists of an algebraic map $f : \P ^1 \rightarrow \sB_{\tJ}$ together with a colored effective divisor
\[
D = \sum_{x \in \P^1 (\C)} \beta_x \otimes [x] \in Q^{\vee}_{\tJ} \otimes_\Z \mathrm{Div} \, \P^1 \hskip 5mm \beta_x \in Q^{\vee}_{\tJ,+}.
\]
We refer to $D$ as the defect of $(f, D)$, and we define the total defect of $(f,D)$ by
\[
|D| := \sum_{x \in \P^1 (\C)} \beta_x \in Q_{\tJ,+}^{\vee}.
\]

For each $\beta \in Q_{\tJ,+}^{\vee}$, we set
$$\sQ ( \sB_{\tJ}, \beta ) := \{ (f,D) \mid (f,D)\ \text{is a quasi-map and } f_* [ \P^1 ] + | D | = \beta \},$$
where $f_* [\P^1]$ is the class of the image of $\P^1$ multiplied by the degree of $\P^1 \to \mathrm{Im} \, f$. We denote $\sQ ( \sB_{\tJ}, \beta )$ by $\sQ_{\tJ} ( \beta )$ in case there is no danger of confusion. By construction, the space $\sQ_{\tJ} ( \beta )$ admits a $G$-action induced from $\sB_{\tJ}$.

\begin{defn}[Drinfeld-Pl\"ucker data]\label{Zas}
Consider a collection $\mathcal L = \{( \psi_{\la}, \mathcal L^{\la} ) \}_{\la \in P_{\tJ,+}}$ of inclusions $\psi_{\la} : \mathcal L ^{\la} \hookrightarrow L ( \la ) \otimes \mathcal O _{\P^1}$ of line bundles $\mathcal L ^{\la}$ over $\P^1$. The data $\mathcal L$ is called Drinfeld-Pl\"ucker data (DP-data) if the canonical inclusion of $G$-modules
$$\eta_{\la, \mu} : L ( \la + \mu ) \hookrightarrow L ( \la ) \otimes L ( \mu )$$
induces an isomorphism
$$\eta_{\la, \mu} \otimes \mathrm{id} : \psi_{\la + \mu} ( \mathcal L ^{\la + \mu} ) \stackrel{\cong}{\longrightarrow} \psi _{\la} ( \mathcal L^{\la} ) \otimes_{\cO_{\P^1}} \psi_{\mu} ( \mathcal L^{\mu} )$$
for every $\la, \mu \in P_{\tJ,+}$.
\end{defn}

\begin{thm}[Drinfeld, see~\cite{FM99,BFGM} and~\cite{Kat18d}]\label{Dr}
The space $\sQ _{\tJ} ( \beta )$ is isomorphic to the variety formed by isomorphism classes of the DP-data $\mathcal L = \{( \psi_{\la}, \mathcal L^{\la} ) \}_{\la \in P_{\tJ,+}}$ such that $\deg \, \mathcal L ^{\la} = - \left< \beta, \la \right>$.
\end{thm}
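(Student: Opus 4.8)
The plan is to identify $\sQ_{\tJ}(\beta)$ with a moduli space of Drinfeld–Plücker data and then match that moduli description with the quasi-map definition degree by degree. First I would recall that a quasi-map $(f,D) : \P^1 \to \sB_{\tJ}$ of total class $\beta$ gives, by pulling back the tautological inclusions on $\sB_{\tJ}$, a collection of line-bundle inclusions: for each $\la \in P_{\tJ,+}$ the bundle $\cO_{\sB_{\tJ}}(-\la)$ has $H^0(\sB_{\tJ},\cO_{\sB_{\tJ}}(\la))^* \cong L(\la)$ giving a canonical inclusion $\cO_{\sB_{\tJ}}(-\la) \hookrightarrow L(\la)\otimes \cO_{\sB_{\tJ}}$, and $f^*$ of this, twisted by $\cO_{\P^1}(D_\la)$ where $D_\la = \sum_x \langle \beta_x,\la\rangle [x]$ records the defect paired against $\la$, produces an inclusion $\psi_\la : \sL^\la \hookrightarrow L(\la)\otimes \cO_{\P^1}$ with $\sL^\la := f^*\cO_{\sB_{\tJ}}(-\la)(-D_\la)$. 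The compatibility of the $\eta_{\la,\mu}$ on $\sB_{\tJ}$ (the Plücker relations defining $\sB_{\tJ}$ inside $\prod \P(L(\varpi_i))$) pulls back to the DP-data condition, and $\deg \sL^\la = -\langle f_*[\P^1],\la\rangle - |D_\la| = -\langle \beta,\la\rangle$ since $f_*[\P^1] + |D| = \beta$. This builds the map from $\sQ_{\tJ}(\beta)$ to the DP-moduli space.

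Next I would construct the inverse. Given a DP-data $\sL$ with $\deg \sL^\la = -\langle\beta,\la\rangle$, on the open locus of $\P^1$ where all $\psi_\la$ are subbundle inclusions (i.e. where $L(\la)\otimes\cO/\psi_\la(\sL^\la)$ is locally free), the quotients define a map to $\prod_i \P(L(\varpi_i))$ landing in $\sB_{\tJ}$ by the Plücker/DP compatibility; this is the honest map $f$ on a dense open set, and since $\P^1$ is a smooth curve and $\sB_{\tJ}$ is projective, $f$ extends to all of $\P^1$. The failure of $\psi_\la$ to be a subbundle at a point $x$ is measured by a torsion quotient whose length, suitably organized over the $i\in\tJ^c$, defines the coefficient $\beta_x \in Q^\vee_{\tJ,+}$ of the defect divisor $D$; one checks $f^*\cO_{\sB_{\tJ}}(-\la) \cong \sL^\la(D_\la)$ so that the degree bookkeeping forces $f_*[\P^1] + |D| = \beta$. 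It is routine that these two constructions are mutually inverse and algebraic in families, hence give an isomorphism of varieties (or schemes).

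The main obstacle I expect is the careful local analysis at points of positive defect: one must verify that the torsion in the cokernels of the $\psi_\la$ is consistent across all dominant $\tJ$-weights $\la$ in a way that genuinely produces a single coweight $\beta_x \in Q^\vee_{\tJ,+}$ (rather than incompatible data for different $\la$), and that the resulting $D$ is independent of the choices made. This is exactly where the DP-data axiom does the work — the isomorphisms $\eta_{\la,\mu}\otimes\id$ force the order of vanishing to be additive in $\la$, hence linear, hence given by pairing with a fixed element of $Q^\vee_{\tJ,+}$ — but making this precise, and checking it behaves well in flat families so that one gets an isomorphism of schemes and not merely a bijection on points, is the technical heart. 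Everything else (the Plücker description of $\sB_{\tJ}$, extension of rational maps from a smooth curve, the degree computation) is standard, and for $\tJ = \emptyset$ this recovers Drinfeld's original identification; the general case differs only in restricting attention to $\la \in P_{\tJ,+}$ throughout.
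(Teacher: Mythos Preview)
The paper does not supply a proof of this statement: it is recorded as a theorem due to Drinfeld with references to \cite{FM99,BFGM} and \cite{Kat18d}, and the text moves on immediately afterward. So there is no in-paper argument to compare your proposal against.

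Your sketch follows the standard route one finds in those references: pull back the tautological inclusions $\cO_{\sB_{\tJ}}(-\la)\hookrightarrow L(\la)\otimes\cO_{\sB_{\tJ}}$ along $f$, absorb the defect into a twist, check the Pl\"ucker compatibilities, and invert by reading off the map on the saturated locus and the defect from the torsion in the cokernels. The point you single out as the main obstacle---that the DP axiom forces the vanishing orders to be additive in $\la$ and hence to come from a single $\beta_x\in Q^\vee_{\tJ,+}$, and that this is well-behaved in families---is indeed the substance of the argument in the cited papers. Nothing in your outline is wrong, and it is faithful to how the result is actually established in the literature the paper points to; it simply is not something the present paper undertakes to reprove.
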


For each $u \in W / W_\tJ$, let $\sQ_{\tJ} ( \beta, u ) \subset \sQ_{\tJ} ( \beta )$ be the closure of the set formed by quasi-maps that are defined at $z = 0$ and whose value at $z=0$ is contained in $\sB_{\tJ}(u)\subset\sB_{\tJ}$. (Hence, we have $\sQ_{\tJ} ( \beta ) = \sQ_{\tJ} ( \beta, e )$.) When convenient, we fix a (possibly non-minimal) representative $u\in W$ of the coset $u\in W/W_\tJ$ and still denote it by $u$.

For each $\la \in P_{\tJ}$, $\beta \in Q_{\tJ,+}^{\vee}$, and $u \in W$, we have a $G$-equivariant line bundle $\cO _{\sQ_{\tJ} ( \beta, u )} ( \la )$ obtained by the (tensor product of the) pull-backs $\cO _{\sQ_{\tJ} ( \beta, u )}( \varpi_i )$ of the $i$-th $\cO ( 1 )$ via the embedding
\begin{equation}
\sQ_{\tJ} ( \beta, u ) \hookrightarrow \prod_{i \in \tJ^c} \P ( L ( \varpi_i ) \otimes_{\C} \C [z] _{\le \left< \beta, \varpi_i \right>} )\label{Pemb}
\end{equation}
for each $\beta \in Q_{\tJ,+}^{\vee}$. Using this, we set
$$\chi_q ( \sQ_{\tJ} ( \beta, u ), \cO_{\sQ_{\tJ} ( \beta, u )} ( \la ) ) := \sum_{i \ge 0} (-1)^i \gch \, H^i ( \sQ_{\tJ} ( \beta, u ), \cO_{\sQ_{\tJ} ( \beta, u )} ( \la ) ) \in \C_{q}^0 P$$
for each $\beta \in Q^{\vee}_{\tJ}$ and $\la \in P_{\tJ}$, where the grading $q$ is understood to count the degree of $z$ detected by the $\Gm$-action, and $\C_{q}^0 P$ denotes the group ring of $P$ over $\C_q^0$. Here we understand that
\begin{equation}
\chi_q ( \sQ_{\tJ} ( \beta, u ), \cO_{\sQ_{\tJ} ( \beta, u )} ( \la ) ) = 0 \hskip 5mm \text{if } \beta \not\in Q^{\vee}_{\tJ,+}.\label{Jnondom}
\end{equation}

\begin{rem}
One may twist the $\Gm$-linearization on $\cO_{\sQ_{\tJ}(\beta,u)}(\la)$, which shifts the $q$-grading.
We always take the natural linearization coming from~\eqref{Pemb}; with this choice, the relevant cohomology groups are concentrated in nonnegative $\Gm$-degrees, so the graded characters lie in $\C_q^0 P$ (see~\cite[Corollary~C]{Kat18c}).
\end{rem}

\subsection{Graph and map spaces and their line bundles}\label{GQL}

For each non-negative integer $n$ and $\beta \in Q^{\vee}_{\tJ,+}$, we set $\sGB_{\tJ, n, \beta}$ to be the space of stable maps of genus zero curves with $n$-marked points to $( \P^1 \times \sB_{\tJ} )$ of bidegree $( 1, \beta )$, which is also called the graph space of $\sB_{\tJ}$. A point of $\sGB_{\tJ, n, \beta}$ is a genus zero quasi-stable curve $C$ with $n$-marked points, together with a map to $\P^1$ of degree one. Hence, we have a unique $\P^1$-component of $C$ that maps isomorphically onto $\P^1$. We call this component the main component of $C$ and denote it by $C_0$. The space $\sGB_{\tJ, n, \beta}$ is a normal projective variety by~\cite[Theorem~2]{FP95} and has at worst quotient singularities arising from automorphisms of the source curves. The natural $( \Gm \times H )$-action on $( \P^1 \times \sB_{\tJ} )$ induces a natural $( \Gm \times H )$-action on $\sGB_{\tJ, n, \beta}$.

We have a morphism $\pi_{\tJ, n, \beta} : \sGB_{\tJ, n, \beta} \rightarrow \sQ _{\tJ}( \beta )$ that factors through $\sGB_{\tJ, 0, \beta}$ (Givental's main lemma~\cite{Giv96}; see~\cite[\S8]{FFKM} and~\cite[\S 1.3]{FP95}). Let $\widetilde{\mathtt{ev}}_j : \sGB_{\tJ, n, \beta} \to \P^1 \times \sB_\tJ$ ($1 \le j \le n$) be the evaluation at the $j$-th marked point, and let $\mathtt{ev}_j : \sGB_{\tJ, n, \beta} \to \sB_{\tJ}$ be its composition with the second projection.

The following result is responsible for the basic case (the case of $\tJ = \emptyset$) of our computation:

\begin{thm}[Braverman-Finkelberg~\cite{BF14a,BF14b,BF14c}, see also~\cite{Kat18c} \S4.1]\label{rat-res}
The morphism $\pi_{0,\beta}$ is a rational resolution of singularities.
\end{thm}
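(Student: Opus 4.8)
The plan is to prove that $\pi_{0,\beta} : \sGB_{\tJ,0,\beta} \to \sQ_\tJ(\beta)$ is a rational resolution in the orbifold sense, i.e.\ that it is a proper birational morphism with $R\pi_{0,\beta*}\cO = \cO$ (and the dual vanishing for the dualizing complex), where both sides are understood in the sense appropriate to varieties with quotient singularities. Since this is cited as Braverman--Finkelberg, I would simply assemble the argument from their work, but let me sketch what that argument looks like. First, I would recall from Givental's main lemma and \cite[\S 8]{FFKM} that $\pi_{0,\beta}$ is well-defined and proper, and that it is an isomorphism over the open locus of $\sQ_\tJ(\beta)$ consisting of honest maps (no defect), which is dense; this gives birationality. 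The source $\sGB_{\tJ,0,\beta}$ has at worst finite-quotient (orbifold) singularities by \cite[Theorem 2]{FP95}, so it is normal with rational singularities, hence the statement reduces to controlling the higher direct images of $\cO$ along $\pi_{0,\beta}$.

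The key geometric input is Braverman--Finkelberg's identification of $\sQ_\tJ(\beta)$ (via the Drinfeld--Pl\"ucker description, Theorem \ref{Dr}) with a Zastava-type space, together with their analysis of the fibers of $\pi_{0,\beta}$ over the defect stratification. Concretely, over a quasi-map with defect $D = \sum_x \beta_x \otimes [x]$, the fiber of $\pi_{0,\beta}$ is built out of products of (possibly twisted) spaces parametrizing the bubbling of genus-zero tails carrying the lost degree $\beta_x$ at each point $x$, and these fiber spaces are rationally connected with the required cohomology vanishing. The cleanest route is to use the factorization property of Zastava spaces: both $\sGB$ and $\sQ_\tJ$ factorize over configuration space in the $\P^1$-variable, so the rationality of the resolution is local on $\P^1$ and reduces to the ``one-point'' situation, where $\sQ_\tJ$ near a maximal-defect point looks like (a slice of) the Zastava space $\mathcal Z^\beta$, which is known to have rational (indeed, for $\sl_2$-type pieces, even rational Gorenstein) singularities by Braverman--Finkelberg, and the corresponding piece of $\sGB$ is an explicit small-type resolution. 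Applying the theorem on rational singularities being preserved under such resolutions (and the fact that a proper birational map from a rational-singularity orbifold to a normal variety, with rationally connected fibers satisfying $R^{>0}\pi_*\cO = 0$, exhibits the target as having rational singularities with $R\pi_*\cO = \cO$) then finishes the argument.

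In more detail, the steps I would carry out are: (i) establish properness and birationality of $\pi_{0,\beta}$ from the definitions of \S\ref{GQL} and Givental's lemma; (ii) invoke \cite[Theorem 2]{FP95} to get that $\sGB_{\tJ,0,\beta}$ has quotient singularities, hence rational singularities and $\Q$-Gorenstein-ness in the orbifold sense; (iii) stratify $\sQ_\tJ(\beta)$ by total defect $|D|$ and use the Drinfeld--Pl\"ucker data of Theorem \ref{Dr} to describe each stratum as a locally closed subvariety fibered over $\mathrm{Sym}$ of $\P^1$, identifying the normal slice with a Zastava space $\mathcal Z^{\gamma}$ for $\gamma \le \beta$; (iv) cite Braverman--Finkelberg's theorem that $\mathcal Z^{\gamma}$ has rational singularities and that the corresponding open piece of $\sGB$ maps to it by a rational resolution (their computation of the fibers, which are iterated affine/projective bundles over products of smaller graph spaces, yields the vanishing $R^{>0}\pi_*\cO = 0$ and $\pi_*\cO = \cO$ fiberwise via the theorem on formal functions); (v) globalize via the factorization structure, upgrading the fiberwise statement to $R\pi_{0,\beta*}\cO_{\sGB_{\tJ,0,\beta}} = \cO_{\sQ_\tJ(\beta)}$, and run the dual argument for the dualizing sheaf using orbifold Grothendieck duality.

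The main obstacle, and the only genuinely nontrivial point, is step (iv)--(v): controlling the higher direct images along the defect strata, since the fibers of $\pi_{0,\beta}$ become increasingly complicated as $|D|$ grows, and a naive stratum-by-stratum argument does not immediately patch to a global statement about $R\pi_{0,\beta*}\cO$. This is precisely where Braverman--Finkelberg's factorization-and-Zastava machinery does the work: it reduces the global higher-direct-image computation to the rank-one case on small discs, where everything is explicit. The orbifold subtleties (replacing ``rational singularities'' by its quotient-singularity analogue, and making sure Grothendieck duality and the vanishing theorems hold in that setting) are routine but must be tracked carefully throughout; I would handle them by passing to a smooth cover or by citing the standard fact that a variety with quotient singularities has rational singularities and satisfies Kawamata--Viehweg-type vanishing for its $\Q$-Cartier structure. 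Since the statement is quoted as a black box from \cite{BF14a,BF14b,BF14c}, in the actual write-up I would give only the citation plus a one-line indication that $\sGB_{\tJ,0,\beta}$ has quotient singularities so that the orbifold formulation applies.
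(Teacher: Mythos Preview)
The paper gives no proof of this theorem: it is stated with attribution to Braverman--Finkelberg and closed with a $\Box$. Your final remark is therefore exactly right---in the actual write-up, a citation is all that is needed---and everything preceding it is surplus to requirements here.

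Two small points are still worth flagging. First, you have misread the scope of the statement: by the paper's convention of suppressing $\emptyset$ from notation, $\pi_{0,\beta}$ means $\pi_{\emptyset,0,\beta}$, i.e.\ the map for the \emph{full} flag variety $\sB = \sB_{\emptyset}$, not for general $\tJ$. The sentence just before the theorem says explicitly that this is ``the basic case (the case of $\tJ = \emptyset$)''. The parabolic analogue is not assumed here; it is instead \emph{deduced} later in Theorem \ref{Jrat} by pushing forward from $\sX(\beta^+,u)$ through the commutative square involving $\eta$ and $\widetilde{\eta}$. So your sketch, which treats general $\tJ$ directly via factorization/Zastava, is logically a different route from the paper's, which bootstraps the parabolic case from the $\tJ=\emptyset$ case using \cite[Corollary 3.35]{Kat18d} and Koll\'ar's criterion (Theorem \ref{Kol}).

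Second, as a sketch of Braverman--Finkelberg's own argument your outline is broadly reasonable, but step (iv)--(v) as you describe it is not quite how they proceed: rather than a stratum-by-stratum factorization reduction, their argument for $R^{>0}\pi_*\cO = 0$ goes through a dimension estimate on the fibers combined with a semismallness-type bound and the normality of the Zastava space; the ``iterated affine/projective bundle'' description of fibers you invoke is not literally available. If you were actually reproducing their proof rather than citing it, that step would need to be rewritten.
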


We note that $\sGB_{\tJ,n,\beta}$ is irreducible (\cite{KP01}).

For each $\la \in P_{\tJ}$, we have a line bundle
$\cO_{\sGB_{\tJ,n,\beta}}(\la):=\pi_{\tJ,n,\beta}^*\cO_{\sQ_{\tJ}(\beta)}(\la)$.
For a $(\Gm\times H)$-equivariant coherent sheaf $\mathcal F$ on a projective $(\Gm\times H)$-variety $\mathcal X$, let
$\chi_q(\mathcal X,\mathcal F)\in \C_q P$
denote its $(\Gm\times H)$-equivariant Euler--Poincar\'e characteristic, valued in the group ring of $P$ over $\C_q$; this notation enhances the element
$\chi_q(\sQ_{\tJ}(\beta,u),\cO_{\sQ_{\tJ}(\beta,u)}(\la))$
defined in~\S\ref{sec:QM}.

\subsection{Equivariant quantum $K$-group of $\sB_{\tJ}$}\label{eqK}

We introduce a polynomial ring $\C Q^{\vee}_{\tJ,+}$ with its variables $Q_i = Q^{\al_i^{\vee}}$ ($i \in \tJ^c$). We set $Q^{\beta} := \prod_{i \in \tJ^c} Q_i ^{\left< \beta, \varpi_i \right>}$ for each $\beta \in Q^{\vee}_{\tJ}$. We define the $H$-equivariant (small) quantum $K$-group of $\sB_{\tJ}$ as:
\begin{equation}
qK_H ( \sB_{\tJ} ) := K_H ( \sB_{\tJ} ) \otimes \C Q^{\vee}_{\tJ,+},\label{qKdef}
\end{equation}
where $K_H ( \sB_{\tJ} )$ is the complexified $H$-equivariant $K$-group of $\sB_{\tJ}$.

Thanks to (the $H$-equivariant versions of)~\cite{Giv00,Lee04} and the finiteness of the quantum multiplication~\cite{ACT18}, $qK_H ( \sB_{\tJ} )$ is equipped with the commutative and associative product $\star$ (called the quantum multiplication) such that:
\begin{enumerate}
\item the element $[\cO_{\sB_{\tJ}}] \otimes 1 \in qK_H ( \sB_{\tJ} )$ is the identity (with respect to $\cdot$ and $\star$);
\item the map $Q^{\beta}\star$ $(\beta \in Q^{\vee}_{\tJ,+})$ is the multiplication of $Q^{\beta}$ in the RHS of~\eqref{qKdef};
\item we have $\xi \star \eta \equiv \xi \cdot \eta \mod ( Q_i ; i \in \tJ^c )$ for every $\xi,\eta \in K_H ( \sB_{\tJ} ) \otimes 1$.
\end{enumerate}

We set
\[
qK_{\Gm \times H} ( \sB_{\tJ} ) := K_H ( \sB_{\tJ} ) \otimes \C_q Q^{\vee}_{\tJ,+} \hskip 3mm \text{and} \hskip 3mm qK_{\Gm \times H}^{\wedge} ( \sB_{\tJ} ) := K_H ( \sB_{\tJ} ) \otimes \bC_q [\![Q^{\vee}_{\tJ,+}]\!],
\]
where we may regard $\C_q Q^{\vee}_{\tJ,+}$ as a subring of $\bC_q[\![Q^{\vee}_{\tJ,+}]\!]$ via extension of scalars from $\C_q$ to $\bC_q$, followed by $(Q_i \mid i \notin \tJ^c)$-adic completion. We can localize $qK_H ( \sB_{\tJ} )$ (resp. $qK_{\Gm \times H} ( \sB_{\tJ} )$ and $qK_{\Gm \times H}^{\wedge} ( \sB_{\tJ} )$) in terms of $\{Q^{\beta}\}_{\beta \in Q^{\vee}_{\tJ,+}}$ to obtain a ring $qK_H ( \sB_{\tJ} )_\lo$ (resp. vector spaces $qK_{\Gm \times H} ( \sB_{\tJ} )_\lo$ and $qK_{\Gm \times H}^{\wedge} ( \sB_{\tJ} )_\lo$).

We sometimes identify $K_H ( \sB_{\tJ} )$ with the submodule $K_H ( \sB_{\tJ} ) \otimes 1$ of $qK_H ( \sB_{\tJ} )$ or $qK_{\Gm \times H} ( \sB_{\tJ} )$. We set $p_i := [\cO_{\sB_{\tJ}} ( \varpi_i )]$ for $i \in \tJ^c$, and we sometimes consider it as an endomorphism of $qK_{\Gm \times H} ( \sB_{\tJ} )$ through the scalar extension of the product of $K_H ( \sB_{\tJ} )$ (i.e. the classical product). For each $i \in \tJ^c$, let $q^{Q_i \partial_{Q_i}}$ denote the $\bC_q P$-endomorphism of $qK_{\Gm \times H} ( \sB_{\tJ} )$ such that
\[
q^{Q_i \partial_{Q_i}} ( \xi \otimes Q^{\beta} ) = q ^{\left< \beta, \varpi_i \right>} \xi \otimes Q^{\beta} \hskip 5mm \xi \in K_H ( \sB_{\tJ} ), \beta \in Q^{\vee}_{\tJ,+}.
\]

Following~\cite[\S2.4]{IMT15}, we consider the operator $T \in \mathrm{End}_{\bC_q P} \, qK_{\Gm \times H}^{\wedge} ( \sB_{\tJ} )$ (also obtained from the operator $T = T(q,t)$ in~\cite{IMT15} by specializing the parameter $t \in K(\sB_{\tJ})$ to $0$). Then, we have the shift operator (also obtained from the operator $A_i = A_i ( q, t )$ in~\cite{IMT15} by specializing $t$ to $0$) defined by
\begin{equation}
A_i ( q ) = T^{-1} \circ p_i^{-1} q^{Q_i \partial_{Q_i}} \circ T \in \mathrm{End} \, qK^{\wedge}_{\Gm \times H} ( \sB_{\tJ} ) \hskip 5mm i \in \tJ^c.\label{sShift}
\end{equation}

\begin{thm}[\cite{IMT15} and~\cite{ACT18}]\label{smult}
For $i \in \tJ^c$, the operator $A_i ( 1 )$ is well-defined and defines the $\star$-multiplication by $[\cO_{\sB_{\tJ}} ( - \varpi_i )]$ in $qK_H ( \sB _{\tJ} )$.
\end{thm}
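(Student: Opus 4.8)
The plan is to reduce the statement to the shift-operator formalism of \cite{IMT15}, specialized at $t=0$ as in the excerpt, and then to invoke the finiteness result of \cite{ACT18} to justify the limit $q \to 1$. Concretely, the operator $A_i(q)$ of (\ref{sShift}) is, by construction, the conjugate of the rescaling $p_i^{-1} q^{Q_i \partial_{Q_i}}$ by the operator $T$; in the work of Iritani--Milanov--Tonita this conjugate is identified (after the substitution $t=0$) with the operator of quantum multiplication by a $q$-deformation of $[\cO_{\sB_{\tJ}}(-\varpi_i)]$ in an appropriate version of quantum $K$-theory with the Novikov variables $Q^{\vee}_+$ present and the dilaton/loop parameter $q$ not yet set to $1$. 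So the first step is simply to cite this: for each $i \in \tJ^c$, $A_i(q)$ acts on $qK_{H\times\Gm}(\sB_\tJ)^\wedge$ as $\star_q$-multiplication by an element $u_i(q) \in qK_{H\times\Gm}(\sB_\tJ)^\wedge$ with $u_i(q) \equiv [\cO_{\sB_\tJ}(-\varpi_i)] \bmod (Q_j; j\in\tJ^c)$.

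The second step is to show that $u_i(q)$ is in fact regular at $q=1$ and specializes there to $[\cO_{\sB_\tJ}(-\varpi_i)]$, and that the operator $A_i(1)$ is correspondingly well-defined on $qK_H(\sB_\tJ)$ (i.e. on the non-completed, non-graded ring). This is exactly where the finiteness of the quantum product \cite{ACT18} enters: finiteness says that $\xi \star \eta$ lies in $K_H(\sB_\tJ)\otimes \C Q^{\vee}_{\tJ,+}$ (a polynomial, not merely a power series, in the $Q_j$), so that the completion $\bC_q[\![Q^{\vee}_+]\!]$ can be replaced by $\C_q Q^{\vee}_+$ and one may then evaluate at $q=1$ without convergence issues. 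One must check that $T$ and $T^{-1}$ preserve this polynomiality up to the action on $q$ — again a consequence of the structure results of \cite{IMT15} together with \cite{ACT18} — so that $A_i(q)$ descends to an operator on $K_H(\sB_\tJ)\otimes\C_q Q^{\vee}_+$ and the fibre at $q=1$ makes sense. Having established that, the $q=1$ specialization of $u_i(q)$ must by the congruence in Step 1 reduce modulo the Novikov ideal to $[\cO_{\sB_\tJ}(-\varpi_i)]$, and since $\star$ at $q=1$ is precisely the small quantum $K$-product, $A_i(1)$ is $\star$-multiplication by the (unique) class $[\cO_{\sB_\tJ}(-\varpi_i)]\otimes 1 \in qK_H(\sB_\tJ)$ — here I use that $[\cO_{\sB_\tJ}(-\varpi_i)]$ is already invertible classically, so no quantum correction to the symbol is possible once we know $u_i(1)$ is a unit reducing to it.

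The main obstacle I expect is Step 2: making rigorous the passage from the completed, $q$-graded ring $qK_{H\times\Gm}(\sB_\tJ)^\wedge$ — where the shift operators of \cite{IMT15} naturally live — to the honest finite-dimensional ring $qK_H(\sB_\tJ)$, and in particular checking that $T$ does not introduce poles at $q=1$ or destroy polynomiality in the Novikov variables. The clean way to handle this is to combine \cite[Theorem]{ACT18} (finiteness/polynomiality) with the explicit recursive description of $T$ in \cite[\S 2.4]{IMT15}: $T$ is built from the $J$-function and its $q$-difference equations, whose coefficients are themselves finite in the Novikov variables, and the relevant denominators are of the form $1-q^{-k}Q^\beta$ with $\beta\neq 0$, which are units in $\C_q Q^{\vee}_+$ after inverting the $Q$'s — or, better, one observes that the $q\to 1$ limit of the whole composite $T^{-1}\circ p_i^{-1}q^{Q_i\partial_{Q_i}}\circ T$ is governed only by the leading behaviour and the finitely many surviving terms. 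Once this bookkeeping is done the identification of $A_i(1)$ with $[\cO_{\sB_\tJ}(-\varpi_i)]\star(-)$ is immediate, and the theorem follows.
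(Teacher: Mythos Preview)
Your outline diverges from the paper's argument and contains a genuine gap in the identification step.

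The gap is in your final sentence of Step 2: from the congruence $u_i(1) \equiv [\cO_{\sB_\tJ}(-\varpi_i)] \bmod (Q_j; j\in\tJ^c)$ you conclude $u_i(1) = [\cO_{\sB_\tJ}(-\varpi_i)]$ on the grounds that the latter is ``already invertible classically, so no quantum correction to the symbol is possible once we know $u_i(1)$ is a unit reducing to it.'' This inference is not valid: there are many units in $qK_H(\sB_\tJ)$ reducing to a given unit modulo the Novikov ideal, so invertibility alone does not pin down $u_i(1)$. You have not ruled out higher Novikov terms in $u_i(1)$.

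The paper's proof avoids this difficulty by a shorter route that does not attempt to identify a $q$-deformed multiplication class $u_i(q)$ at all. After citing \cite[Remark 2.14]{IMT15} for the well-definedness of the substitution $q=1$, it invokes \cite[Corollary 2.9]{IMT15} together with \cite[Theorem 8]{ACT18} to conclude directly that the $A_i(1)$ are mutually commuting endomorphisms of $qK_H(\sB_\tJ)$ that commute with the $\star$-multiplication. Since for any commutative ring $R$ one has $\mathrm{End}_R R \cong R$, each $A_i(1)$ is $\star$-multiplication by $A_i(1)([\cO_{\sB_\tJ}])$, and \cite[Lemma 6]{ACT18} computes this value to be exactly $[\cO_{\sB_\tJ}(-\varpi_i)]$. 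The key step you are missing is this last evaluation at the identity element: it determines the multiplier on the nose, with no need to argue about quantum corrections. Your Step~1 (an a~priori identification of $A_i(q)$ as a $\star_q$-multiplication before specializing) is not needed, and the analysis of poles of $T$ at $q=1$ that you flag as the ``main obstacle'' is subsumed in the cited remark of \cite{IMT15}.
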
 

\begin{proof}
The well-definedness of the substitution $q = 1$ is by~\cite[Remark~2.14]{IMT15}. By~\cite[Corollary~2.9]{IMT15} and~\cite[Theorem~8]{ACT18}, the set $\{A_i ( 1 )\}_{i \in \tJ^c}$ defines mutually commutative endomorphisms of $qK_H ( \sB _{\tJ} )$ that commute with the $\star$-multiplication. Since $\mathrm{End} _R R \cong R$ for every ring $R$, we conclude the assertion by $A_i (1)( [\cO_{\sB_{\tJ}}] ) = [\cO_{\sB_{\tJ}} ( - \varpi_i )]$ (\cite[Lemma~6]{ACT18}).
\end{proof}

\section{A description of the quantum $K$-groups}

We continue to work in the setting of the previous section.

\subsection{$K$-groups of semi-infinite partial flag manifolds}

Let $\tJ \subset \tI$ be a subset. The semi-infinite partial flag manifold $\bQ_{\tJ} ^{\ra}$ is an ind-scheme whose set of $\C$-valued points is
\[
G (\!(z)\!) / H ( \C ) \cdot ( [P ( \tJ ), P ( \tJ )] (\!(z)\!) ).
\]
This is a pure ind-scheme of ind-infinite type~\cite{Kat18d}. Note that the group $Q^{\vee} \subset H (\!(z)\!) / H$ acts on $\bQ_{\tJ}^{\ra}$ from the right, whose action factors through $Q^{\vee}_{\tJ}$ via the projection described below. The ind-scheme $\bQ_{\tJ}^{\ra}$ is equipped with a $G [\![z]\!]$-equivariant line bundle $\cO _{\bQ_{\tJ}^{\ra}} ( \la )$ for each $\la \in P_{\tJ}$. Here we normalize so that $\Gamma ( \bQ_{\tJ}^{\ra}, \cO_{\bQ_{\tJ}^{\ra}} ( \la ) )$ is co-generated by its $H$-weight $(-\la)$-part as a $B^- [\![z]\!]$-module.

The following two results are not recorded in the literature in a strict sense, but they are straightforward consequences of the set-theoretic considerations that are allowed in view of~\cite[Theorem~A]{Kat18d}.

\begin{thm}\label{si-Bruhat}
Let $\tJ \subset \tI$. We have an $\bI$-orbit decomposition
$$\bQ_{\tJ}^{\ra} = \bigsqcup_{u \in W / W_\tJ, \beta \in Q^{\vee}_{\tJ}} \bO_{\tJ} ( u t_{\beta} ).$$
\end{thm}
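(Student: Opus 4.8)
The plan is to reduce the statement to the known Iwahori--Bruhat decomposition of the \emph{full} semi-infinite flag manifold $\bQ^{\ra}_{\emptyset}$ (equivalently $\bQ^{\ra}$), for which the corresponding result
$$\bQ^{\ra} = \bigsqcup_{u \in W,\ \beta \in Q^{\vee}} \bO(u t_\beta)$$
is available from \cite{Kat18d} (Theorem A there gives the scheme structure, and the set-theoretic orbit description is classical, going back to the loop-group picture of \cite{FFKM,KNS17}). The key point is that the natural projection of ind-schemes
$$\pi_{\tJ}\colon \bQ^{\ra}_{\emptyset} = G(\!(z)\!)/H(\C)\cdot (N(\!(z)\!)) \longrightarrow \bQ^{\ra}_{\tJ} = G(\!(z)\!)/H(\C)\cdot ([P(\tJ),P(\tJ)](\!(z)\!))$$
is $\bI$-equivariant and surjective on $\C$-points (it is induced by the inclusion $N(\!(z)\!) \subset [P(\tJ),P(\tJ)](\!(z)\!)$), so every $\bI$-orbit in $\bQ^{\ra}_{\tJ}$ is the image of at least one $\bI$-orbit in $\bQ^{\ra}_{\emptyset}$.

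First I would set up the combinatorial bookkeeping: the target of the map $W_\af = W \ltimes Q^{\vee} \to W/W_\tJ \times Q^{\vee}_\tJ$ should be described explicitly. On the level of translation parts, the right $Q^{\vee}$-action on $\bQ^{\ra}_{\emptyset}$ descends to the right $Q^{\vee}_{\tJ}$-action on $\bQ^{\ra}_{\tJ}$ via the projection $Q^{\vee} \twoheadrightarrow Q^{\vee}/Q^{\vee}\cap \mathfrak{p}(\tJ)\text{-part}$, which under our identifications is exactly $Q^{\vee} \to Q^{\vee}_{\tJ}$ coming from pairing against $\{\varpi_i\}_{i\in\tJ^c}$; on the level of the finite Weyl part, $W \to W/W_\tJ$ is the minimal-length coset projection. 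So a reasonable guess for the image of $\bO(w t_\beta)$ ($w\in W$, $\beta\in Q^{\vee}$) is $\bO_{\tJ}(\bar w\, t_{\bar\beta})$ where $\bar w \in W/W_\tJ$ is the minimal coset representative of $w$ and $\bar\beta$ the image of $\beta$ in $Q^{\vee}_\tJ$. The steps are then: (i) show $\pi_\tJ(\bO(w t_\beta)) = \bO_\tJ(\bar w\, t_{\bar\beta})$ as $\C$-points — this is a direct $\bI$-orbit computation using Bruhat-type normal forms for $G(\!(z)\!)$ modulo the parabolic loop subgroup; (ii) show the $\bO_\tJ(u t_{\gamma})$ for $(u,\gamma)\in W/W_\tJ \times Q^{\vee}_\tJ$ are pairwise disjoint — here one pairs points against the line bundles $\cO_{\bQ^{\ra}_\tJ}(\la)$, $\la \in P_{\tJ,+}$, recovering $(u,\gamma)$ from the $H$-weight and $z$-degree data of the corresponding Drinfeld--Pl\"ucker-type coordinates, exactly as in the $\sB_\tJ$ case of \eqref{Bdec}; and (iii) conclude the decomposition is a disjoint union covering all of $\bQ^{\ra}_\tJ$, using surjectivity of $\pi_\tJ$ together with (i).

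The main obstacle I anticipate is step (ii): establishing that the $\bI$-orbits labelled by \emph{distinct} pairs $(u,\gamma)$ really are distinct, and in particular that no two pairs $(w,\beta)$, $(w',\beta')$ with the \emph{same} image collapse in an unexpected way while two with different images accidentally coincide. Since $\bQ^{\ra}_\tJ$ is of ind-infinite type, one cannot just count dimensions; the clean route is to use the $G[\![z]\!]$-equivariant line bundles and the fact (guaranteed by \cite[Theorem A]{Kat18d}) that $\bQ^{\ra}_\tJ$ embeds, locally on each $\bI$-stable finite piece, into products of projectivizations of level-one $\widehat{\mathfrak g}$-modules; then the $(H\times\Gm)$-fixed point of $\bO_\tJ(u t_\gamma)$ has a computable weight, and these weights separate the pairs $(u,\gamma) \in W/W_\tJ \times Q^{\vee}_\tJ$. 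A secondary technical nuisance is matching the normalization conventions (the $-w_0$-twist warned about after \eqref{Bdec}) so that the $\beta$-grading in Theorem~\ref{Dr} is respected; this is bookkeeping rather than a genuine difficulty. Once (ii) is in place, (i) and (iii) are formal consequences of $\bI$-equivariance and surjectivity of $\pi_\tJ$, so the proof should be short modulo citing \cite{Kat18d}.
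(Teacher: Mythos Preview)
Your proposal is correct and aligns with the paper's own treatment: the paper gives no proof beyond the remark that this theorem (together with Corollary~\ref{Iproj}) is a ``straight-forward consequence of the set-theoretic consideration that is allowed in view of \cite[Theorem A]{Kat18d}.'' Your plan---reduce to the full-flag case via the $\bI$-equivariant projection $\pi_{\tJ}$ and separate orbits by the $(H\times\Gm)$-weights on the line bundles---is precisely the sort of set-theoretic argument the paper has in mind, and in fact supplies more detail than the paper itself.
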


\begin{cor}\label{Iproj}
Let $\tJ \subset \tI$, and let $w = u t_{\beta}$ with $u \in W$ and $\beta \in Q^{\vee}$. We define
\[
[u t_{\beta}]_{\tJ} := u' t_{\beta'} \in W_\af \hskip 10mm [\beta]_{\tJ} := \beta' \in Q^{\vee}_{\tJ},
\]
where $u'$ is the minimal-length representative of the coset $uW_{\tJ}$ and
$$\beta' := \beta - \sum_{j \in \tJ} \left< \beta, \varpi_j \right> \al_j^{\vee}.$$
Then, the natural quotient map $\bQ^{\ra} \rightarrow \bQ_{\tJ}^{\ra}$ sends the $\bI$-orbit $\bO ( u t_{\beta} )$ to $\bO_{\tJ} ( [u t_{\beta}]_{\tJ} )$.
\end{cor}

For each $u \in W / W_\tJ$ and $\beta \in Q^{\vee}_\tJ$, we set $\bQ_\tJ ( u t _{\beta} ) := \overline{\bO_{\tJ} ( u t_{\beta} )} \subset \bQ_{\tJ}^\ra$. We have embeddings $\sB_{\tJ} ( u ) \subset \sQ_{\tJ} ( \beta, u ) \subset \bQ_{\tJ} ( u )$ ($u \in W / W_{\tJ}$) such that the line bundles $\cO ( \la )$ ($\la \in P_{\tJ}$) correspond to each other by restrictions (\cite{BF14b,Kat18,KNS17}).

The following result is essentially contained in~\cite{Kat18d}.

\begin{thm}\label{char-limit}
Let $\tJ \subset \tI$. For each $u \in W / W_\tJ$ and $\la \in P_{\tJ, +}$, we have
\begin{equation}
\lim_{\beta \to \infty} \chi_q ( \sQ_{\tJ} ( \beta,u ), \cO_{\sQ_{\tJ} ( \beta,u )} ( \la ) )
= \gch \, H^0 ( \bQ_{\tJ} ( u ), \cO_{\bQ_{\tJ} ( u )} ( \la ) ) \in \bC_q P.
\label{chi-limit}
\end{equation}
\end{thm}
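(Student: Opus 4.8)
The plan is to reduce the partial-flag statement to the known full-flag ($\tJ = \emptyset$) case via the projection map $\pi \colon \bQ^{\ra} \to \bQ_{\tJ}^{\ra}$ together with the stabilization/limit formula over quasi-map spaces. First I would recall that for $\tJ = \emptyset$ the analogous vanishing $H^{>0}(\bQ(u),\cO_{\bQ(u)}(\la)) = 0$ and the limit formula \eqref{chi-limit} are established in \cite{Kat18d}; these constitute the base case. The task is to push these two facts through $\pi$. The key geometric input is that $\pi$ restricts to a map $\bQ(u) \to \bQ_{\tJ}(u')$ (with $u' = [u]_{\tJ}$) which should be identified, fiberwise, with a (pro-)fibration whose fibers are semi-infinite flag manifolds of the Levi $[P(\tJ),P(\tJ)]$ — concretely, $\bQ(u)$ fibers over $\bQ_{\tJ}(u')$ with fiber modeled on $Q^{\vee}_{\tJ}$-many copies of the finite-dimensional partial flag $P(\tJ)/B$ glued along the semi-infinite structure. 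Granting $R\pi_* \cO_{\bQ(u)} = \cO_{\bQ_{\tJ}(u')}$ (a rational-fibration / Kempf-type vanishing along the fibers, using that $P(\tJ)/B$ has rational singularities trivially and $H^{>0}(P(\tJ)/B, \cO) = 0$), the Leray spectral sequence and the projection formula give
\begin{equation}
H^i(\bQ_{\tJ}(u'), \cO_{\bQ_{\tJ}(u')}(\la)) \cong H^i(\bQ(u), \pi^* \cO_{\bQ_{\tJ}(u')}(\la)) = H^i(\bQ(u), \cO_{\bQ(u)}(\la))
\end{equation}
for all $i$, where the last identification uses that the line bundles $\cO(\la)$ match under $\pi$ for $\la \in P_{\tJ}$ (recorded after Corollary \ref{Iproj}). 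This immediately transports the vanishing $H^{>0} = 0$ from the full-flag case, and also identifies the $H^0$'s, hence their graded characters.

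Next I would handle the limit formula. On the quasi-map side, there is a proper morphism $\sQ(\beta, u) \to \sQ_{\tJ}([\beta]_{\tJ}, u')$ (the $\P^1$-valued analogue of $\pi$), and the same fiberwise rational-fibration statement should give $R(\text{this map})_*\cO = \cO$ with $\cO(\la)$ pulling back to $\cO(\la)$ for $\la \in P_{\tJ}$. Therefore
\begin{equation}
\chi(\sQ_{\tJ}([\beta]_{\tJ}, u'), \cO_{\sQ_{\tJ}}(\la)) = \chi(\sQ(\beta, u), \cO_{\sQ}(\la)).
\end{equation}
Taking $\beta \to \infty$ in $Q^{\vee}_+$ forces $[\beta]_{\tJ} \to \infty$ in $Q^{\vee}_{\tJ}$; combining with the already-established full-flag limit \eqref{chi-limit} and the $H^0$-identification of the previous paragraph yields \eqref{chi-limit} for $\bQ_{\tJ}(u')$. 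A minor bookkeeping point is that one must check all $u' \in W/W_{\tJ}$ arise as $[u]_{\tJ}$ for some $u \in W$ (they do, taking $u$ the minimal-length representative) and that the $\Gm$-grading is respected by $\pi$ — it is, since $\pi$ is $(H\times\Gm)$-equivariant by construction of the $z$-action.

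**The main obstacle** I expect is establishing the fiberwise statement $R\pi_* \cO_{\bQ(u)} = \cO_{\bQ_{\tJ}(u')}$ (and its quasi-map analogue) cleanly, since $\bQ^{\ra}$ is an ind-scheme of ind-infinite type and $\bQ(u)$ is only a (possibly singular) semi-infinite Schubert variety, so "fiber of $\pi$" and "$R\pi_*$" need to be interpreted with care. The honest route is probably not to work on $\bQ$ directly but to descend to a finite-dimensional model: use the embedding $\sB_{\tJ}(u) \subset \sQ_{\tJ}(\beta,u) \subset \bQ_{\tJ}(u)$ and the results of \cite{Kat18d} (Theorem A, Corollary C) that realize sections of $\cO(\la)$ on $\bQ_{\tJ}(u)$ as a limit of sections on the $\sQ_{\tJ}(\beta,u)$, then prove the pushforward statement on each $\sQ(\beta,u) \to \sQ_{\tJ}([\beta]_{\tJ},u')$ — a genuine morphism of finite-type varieties — where one can invoke the rational-singularities results of \S 2 (Theorem \ref{Jrat}) and a relative Kempf vanishing for the $P(\tJ)/B$-bundle structure. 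Passing to the limit then needs a flatness or uniformity argument so that cohomology and the limit over $\beta$ commute, which is exactly the kind of statement proved in \cite{Kat18d} Appendix A for the full flag case and which should extend verbatim once the finite-level pushforward is in hand. If instead one wants the purely ind-scheme argument, the obstacle becomes checking that $\pi$ is ind-proper with geometrically connected, rationally-connected fibers admitting no higher $\cO$-cohomology, which can be reduced to the analogous well-known statement for the map $\bQ^{\ra} \to \bQ_{\tJ}^{\ra}$ of Kashiwara's semi-infinite flag ind-schemes; citing \cite{KNS17,Kat18d} for the structure of these fibers as (ind-)$P(\tJ)/B$-bundles would suffice.
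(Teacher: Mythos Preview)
The paper does not give its own proof of this statement: Theorem~\ref{char-limit} is simply quoted from \cite{Kat18d} (Corollary~C and Appendix~A), with no argument supplied here. So there is no in-paper proof to compare against.

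That said, your reduction strategy is essentially the one behind the cited reference, as one can infer from this very paper. The character identity you are aiming for---$\gch\,H^0(\bQ_\tJ([w]_\tJ),\cO(\la)) = \gch\,H^0(\bQ(w),\cO(\la))$ for $\la \in P_\tJ$, together with the higher vanishing---is exactly Theorem~\ref{coh-si}, also attributed to \cite{Kat18d} Appendix~A. And the finite-level pushforward statement you flag as the ``main obstacle,'' namely $\R^{\bullet}\eta_*\cO_{\sQ(\beta^+,u)} = \cO_{\sQ_\tJ([\beta^+]_\tJ,u')}$, is precisely \cite[Corollary~3.35]{Kat18d}, which this paper invokes verbatim in the proof of Theorem~\ref{Jrat}. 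So the ingredient you worry about is already on the shelf; you do not need Theorem~\ref{Jrat} itself (which would be a forward reference), only that cited corollary together with the surjectivity from \cite[Remark~3.36]{Kat18d} to ensure every $\beta \in Q^\vee_{\tJ,+}$ is hit. With that in hand, your Euler-characteristic identity $\chi(\sQ_\tJ(\beta,u'),\cO(\la)) = \chi(\sQ(\beta^+,u),\cO(\la))$ follows by the projection formula, and taking $\beta^+ \to \infty$ gives the limit statement via the full-flag case.

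One small caution: your discussion of the fibers of $\pi$ as ``$P(\tJ)/B$-bundles'' is heuristically right but not literally how the argument runs in the ind-infinite setting; as you yourself note at the end, the honest route is to stay on the finite-type quasi-map spaces and pass to the limit, which is what \cite{Kat18d} Appendix~A does. You have correctly identified both the strategy and its one nontrivial input.
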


\begin{proof}
By~\cite[Corollary~C]{Kat18d}, the limit
\[
\lim_{\beta \to \infty} \chi_q ( \sQ_{\tJ} ( \beta,u ), \cO_{\sQ_{\tJ} ( \beta,u )} ( \la ) )
\]
exists. Its value is the graded character of the dual of the Demazure module
${\mathbb W}_{uw_0} ( \la ) \subset \bX ( \la )$ (see~\cite[\S3.2]{Kat18d} for notation).
This follows from the normality of the ring $R( \tJ )$ (\cite[Proposition~A.1]{Kat18d}) and the definition of $R_{u}^{t_{\beta}}(\tJ)$ in~\cite[\S4.1]{Kat18d}. Therefore, we obtain~\eqref{chi-limit} as
\[
\lim_{\beta \to \infty} \chi_q ( \sQ_{\tJ} ( \beta,u ), \cO_{\sQ_{\tJ} ( \beta,u )} ( \la ) )
= \gch \, {\mathbb W}_{uw_0} ( \la )^{\vee}
= \gch \, H^0 ( \bQ_{\tJ} ( u ), \cO_{\bQ_{\tJ} ( u )} ( \la ) )
\]
by~\cite[Corollary~A.3]{Kat18d}; cf.~\cite[Theorem~4.30]{KNS17} and Theorem~\ref{coh-si} below.
\end{proof}

\begin{thm}[\cite{KNS17} Corollary~4.31 and~\cite{Kat18d} Appendix~A]\label{coh-si}
Let $\tJ \subset \tI$. For $w \in W _\af$ and $\la \in P_{\tJ}$, we have
\[
\mathrm{gch} \, H^0 ( \bQ_{\tJ} ( [w]_{\tJ} ), \cO_{\bQ_{\tJ} ( [w]_{\tJ} )} ( \la ) ) = \mathrm{gch} \, H^0 ( \bQ ( w ), \cO_{\bQ ( w )} ( \la ) ) \in \Z_{\ge 0} [\![q^{-1}]\!] P.
\]
This is zero if $\la \not\in P_{\tJ,+}$. Moreover, their higher cohomologies vanish.
\end{thm}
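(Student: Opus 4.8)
The plan is to deduce the partial-flag statement from the known full-flag case (already available from \cite{KNS17} Corollary 4.29 and \cite{Kat18d} Appendix A) by realizing the projection $\pi : \bQ^{\mathrm{rat}} \to \bQ_{\tJ}^{\mathrm{rat}}$ as a (pro-)fibration with nice fibers and pushing forward line bundles along it. First I would set up the geometry: for $w \in W_\af$, the Schubert variety $\bQ(w) \subset \bQ^{\mathrm{rat}}$ maps onto $\bQ_{\tJ}([w]_{\tJ})$ under $\pi$, and I would identify the restriction $\pi|_{\bQ(w)} : \bQ(w) \to \bQ_{\tJ}([w]_{\tJ})$. The key local fact is that the fiber of $\bQ^{\mathrm{rat}} \to \bQ_{\tJ}^{\mathrm{rat}}$ is built from the affine flag variety of the Levi of $P(\tJ)$ (more precisely, from $[P(\tJ),P(\tJ)](\!(z)\!)/([P(\tJ),P(\tJ)] \cap \bI\text{-type subgroup})$), so that $\pi$ restricted to Schubert varieties is a fibration with fibers that are semi-infinite Schubert varieties for the subsystem $\tJ$. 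Since $\la \in P_{\tJ}$ means $\la$ is a character of $P(\tJ)$, the line bundle $\cO_{\bQ(w)}(\la)$ is pulled back from $\bQ_{\tJ}([w]_{\tJ})$ up to a correction supported on the fibers; concretely $\cO_{\bQ(w)}(\la) \cong \pi^* \cO_{\bQ_{\tJ}([w]_{\tJ})}(\la)$ when $\la \in P_{\tJ,+}$ because the Plücker-type embedding (\ref{Pemb}) for the $\tJ^c$-coordinates factors through $\bQ_{\tJ}^{\mathrm{rat}}$.

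Next I would invoke the vanishing and connectedness of fibers. Using Theorem \ref{char-limit} (or rather its $\bQ^{\mathrm{rat}}$-analogue from \cite{Kat18d}) together with the normality/rational-singularity statements cited in \S\ref{sec:QM}, one knows $H^{>0}(\bQ(w), \cO_{\bQ(w)}(\la)) = 0$ and that $\pi_* \cO_{\bQ(w)} = \cO_{\bQ_{\tJ}([w]_{\tJ})}$ (the fibers are connected with trivial higher cohomology of the structure sheaf — this is where the semi-infinite-Schubert-variety-of-the-Levi description and its known cohomological properties enter). Combined with the projection formula, $R\pi_* \cO_{\bQ(w)}(\la) = R\pi_*(\pi^*\cO_{\bQ_{\tJ}([w]_{\tJ})}(\la)) = \cO_{\bQ_{\tJ}([w]_{\tJ})}(\la) \otimes R\pi_*\cO_{\bQ(w)} = \cO_{\bQ_{\tJ}([w]_{\tJ})}(\la)$, so the Leray spectral sequence degenerates and gives both the vanishing of $H^{>0}(\bQ_{\tJ}([w]_{\tJ}), \cO(\la))$ and the equality
\[
H^i(\bQ_{\tJ}([w]_{\tJ}), \cO_{\bQ_{\tJ}([w]_{\tJ})}(\la)) \cong H^i(\bQ(w), \cO_{\bQ(w)}(\la))
\]
as $(H\times\Gm)$-modules for all $i$. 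Taking $\gch$ of the $i=0$ piece yields the claimed identity, and the fact that both sides lie in $\Z_{\ge 0}[\![q^{-1}]\!]P$ follows since they are genuine graded $(H\times\Gm)$-modules with grading bounded above by the normalization conventions. The statement that the common value is $0$ when $\la \notin P_{\tJ,+}$ follows because then $\la$ is not dominant for the Levi, so the fiberwise sections already vanish, forcing $\pi_*\cO_{\bQ(w)}(\la) = 0$; equivalently one sees it directly from $H^0(\bQ(w),\cO(\la))$ being co-generated by its extremal weight space as a $B^-[\![z]\!]$-module, which is empty when $\la$ fails the $\tJ$-dominance inequalities.

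I expect the main obstacle to be making the fiber description of $\pi : \bQ^{\mathrm{rat}} \to \bQ_{\tJ}^{\mathrm{rat}}$ rigorous at the level of ind-schemes and, in particular, controlling it compatibly with the stratification of Theorem \ref{si-Bruhat} and Corollary \ref{Iproj} — one must check that $\pi(\bQ(w)) = \bQ_{\tJ}([w]_{\tJ})$ with reduced, normal image and that $\pi$ really is flat (or at least has the base-change behavior needed for the projection formula) over each $\bI$-orbit, not merely set-theoretically a bijection on orbit labels. Since \cite{Kat18d} Theorem A supplies the scheme structure and \cite{KNS17}/\cite{Kat18d} Appendix A supply the requisite normality and cohomology-vanishing inputs for the full-flag side, this should reduce to a careful but routine verification; the genuinely new content is just the identification of the pushforward, after which the statement is formal. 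One subtlety worth isolating: the line bundle $\cO_{\bQ(w)}(\la)$ for $\la \in P_{\tJ}$ is $G[\![z]\!]$-equivariantly pulled back from $\bQ_{\tJ}^{\mathrm{rat}}$ by construction of the normalization (its sections are co-generated by the $H$-weight $-\la$ part under $B^-[\![z]\!]$, and that $B^-[\![z]\!]$-submodule structure only sees the $\tJ^c$-directions), so no correction term actually appears — this needs to be stated carefully but costs nothing once the conventions are unwound.
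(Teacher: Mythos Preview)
The paper does not prove this theorem: the $\Box$ after the statement indicates it is quoted without proof from \cite{KNS17} Corollary~4.29 (which treats $\tJ=\emptyset$) and \cite{Kat18d} Appendix~A (which carries out the parabolic extension). So there is no in-paper argument to compare against; your task is really to check that your reduction to the full-flag case is what those references do, or at least that it works.

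Your strategy---pull back $\cO(\la)$ along $\pi:\bQ(w)\to\bQ_\tJ([w]_\tJ)$, prove $\R\pi_*\cO_{\bQ(w)}=\cO_{\bQ_\tJ([w]_\tJ)}$, then apply the projection formula and Leray---is the natural geometric route and is essentially how \cite{Kat18d} proceeds, though there the argument is typically run at the finite level of quasi-map spaces (using $\R\eta_*\cO_{\sQ(\beta^+,u)}=\cO_{\sQ_\tJ(\beta,u)}$, which is \cite[Corollary~3.35]{Kat18d} and appears again in the proof of Theorem~\ref{Jrat}) and then passed to the limit via Theorem~\ref{char-limit}, rather than directly on the ind-scheme. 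That finite-level route sidesteps exactly the obstacle you flag: one never needs to make sense of flatness or the projection formula for a map of ind-schemes of ind-infinite type, because the pushforward statement is established scheme-by-scheme and the limit is only taken at the level of characters. Your direct approach would require, as you say, controlling $\pi$ on Schubert varieties as a genuine fibration with the Levi semi-infinite flag as fiber; this is plausible but is not set up in the present paper, and I would not call it ``routine''---the ind-infinite-type issues are real. If you want to write a self-contained argument here, I recommend following the finite-level-then-limit pattern instead.

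One small correction: the isomorphism $\cO_{\bQ(w)}(\la)\cong\pi^*\cO_{\bQ_\tJ([w]_\tJ)}(\la)$ holds for all $\la\in P_\tJ$, not just $\la\in P_{\tJ,+}$; you need this to get the vanishing statement for $\la\notin P_{\tJ,+}$ out of the same projection-formula argument.
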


The following is a consequence of the character estimate coming from~\cite[(2.19)]{KNS17} applied to~\cite[Corollary~4.31 and Proposition~D.1]{KNS17}:

\begin{cor}\label{char-est}
For $u \in W$, $\beta \in Q^{\vee}$, and $\la \in P_+$, we have
$$q^{\left< \beta,\la \right>} \gch \, H^0 ( \bQ ( u t_{\beta} ), \cO_{\bQ ( u t_{\beta})} ( \la ) ) \in e^{-u\la} ( 1 + \sum_{0 \neq \gamma \in \Z_{\ge 0} \Pi} \Z e^{\gamma} ) + q^{-1} \C [\![q^{-1}]\!] P.$$
\end{cor}

We define a $\C_q^0 P$-module $K_{\Gm \times H}''(\bQ_{\tJ}^{\ra})$, a naive version of the equivariant $K$-group of $\bQ_{\tJ}^{\ra}$, as follows:
\[
K_{\Gm \times H}'' ( \bQ_{\tJ}^{\ra} ) := \{ \sum_{u \in W / W_{\tJ}, \beta \in Q^{\vee}_{\tJ}} a_{u, \beta} [\cO_{\bQ_{\tJ} ( u t_{\beta} )}] \mid a_{u, \beta} \in \C _{q}^0 P \}.
\]
Here we remark that the sum in the definition of $K_{\Gm \times H}'' ( \bQ_{\tJ}^{\ra} )$ is finite. We set $K_{\Gm \times H}' ( \bQ_{\tJ}^{\ra} ) := \C_q \otimes_{\C_q^0} K_{\Gm \times H}'' ( \bQ_{\tJ}^{\ra} )$. For each $\gamma \in Q^{\vee}_{\tJ}$, we also define
\[
K_{\Gm \times H}'' ( \bQ_{\tJ} ( t_{\gamma} ) ) := \{ \sum_{u \in W / W_{\tJ}, \beta - \gamma \in Q^{\vee}_{\tJ,+}} a_{u, \beta} [\cO_{\bQ_{\tJ} ( u t_{\beta} )}] \in K_{\Gm \times H}'' ( \bQ_{\tJ}^{\ra} ) \}.
\]
We  also consider its completion
\[
K_{\Gm \times H}^{\wedge} ( \bQ_{\tJ}^{\ra} ) := \C_q \otimes_{\C_q^0} \varprojlim _{\gamma} \bigl( K_{\Gm \times H}'' ( \bQ_{\tJ}^{\ra} ) / K_{\Gm \times H}'' ( \bQ_{\tJ} ( t_{\gamma} ) ) \bigr)
\]
and its subset
\[
K_{\Gm \times H}^+ ( \bQ_{\tJ}^{\ra} ) := \{ \!\!\! \sum_{u \in W / W_{\tJ}, \beta \in Q^{\vee}_{\tJ}} \!\!\! a_{u, \beta} [\cO_{\bQ_{\tJ} ( u t_{\beta} )}] \in K_{\Gm \times H}^{\wedge} ( \bQ_{\tJ}^{\ra} ) \mid \sum_{u,\beta} |a_{u,\beta}| q^{-\left< \beta, \rho_\tJ \right>} \in \bC_q P \},
\]
where we understand that
\[
|a| = \sum_{m \in \Z, \la \in P} | a_{m,\la} | \cdot q^m e^{\la} \hskip 5mm \text{when} \hskip 5mm a = \sum_{m \in \Z, \la \in P} a_{m,\la} \cdot q^m e^{\la} \hskip 5mm (a_{m,\la}\in \C).
\]
Note that $K_{\Gm \times H}^+ ( \bQ_{\tJ}^{\ra} )$ and $K_{\Gm \times H} ^{\wedge} ( \bQ_{\tJ}^{\ra} )$ are $\C_q P$-modules.

Let $\mathrm{Fun}_{P_\tJ} ( \bC_q P )$ denote the set of functions on $P_\tJ$ valued in $\bC_q P$. We set
\[
\mathrm{Fun}_{P_\tJ}^{\mathrm{neg}} ( \bC_q P ) := \{f \in \mathrm{Fun}_{P_\tJ} ( \bC_q P ) \mid \exists \gamma \in P_{\tJ} \text{ s.t. } f ( \la ) = 0 \text{ for each } \la \in \gamma + P_{\tJ, +}  \}
\]
and $\mathrm{Fun}_{P_\tJ}^{\mathrm{ess}} ( \bC_q P ) := \mathrm{Fun}_{P_\tJ} ( \bC_q P ) / \mathrm{Fun}_{P_\tJ}^{\mathrm{neg}} ( \bC_q P )$. We define the $\C _{q} P$-linear map
\begin{align}\nonumber
K' _{\Gm \times H} & ( \bQ_{\tJ}^{\ra} ) \ni \sum_{u \in W / W_\tJ, \beta \in Q^{\vee}_{\tJ}} a_{u, \beta} [\cO_{\bQ_{\tJ} ( u t_{\beta} )}] \\
\mapsto & \left( \la \mapsto \sum_{u, \beta} a_{u, \beta} \gch \, H ^0 ( \bQ_{\tJ}^{\ra}, \cO_{\bQ_{\tJ} ( u t_{\beta} )} ( \la ) ) \right) \in \mathrm{Fun}_{P_\tJ} ( \bC_q P ),\label{FJ}
\end{align}
that we denote by $F_{\tJ}$. Here we regard $\cO_{\bQ_{\tJ}(u t_{\beta})}(\la)$ as a sheaf on $\bQ_{\tJ}^{\ra}$ supported on $\bQ_{\tJ}(u t_{\beta})$ via pushforward along the natural closed immersion.

\begin{thm}\label{ch-inj}
Let $\tJ \subset \tI$. The functional $F_{\tJ}$ induces an injective $\C _{q} P$-linear map
\[
K' _{\Gm \times H} ( \bQ_{\tJ}^{\ra} ) \longrightarrow \mathrm{Fun}_{P_\tJ}^{\mathrm{ess}} ( \bC_q P ).
\]
This extends to a $\bC _{q} P$-linear map
\[
K^+ _{\Gm \times H} ( \bQ_{\tJ}^{\ra} ) \longrightarrow \mathrm{Fun}_{P_\tJ}^{\mathrm{ess}} ( \bC_q P ).
\]
\end{thm}

\begin{proof}
The first assertion reduces to the $\C _{q} P$-linear independence of the functionals
\[
P_{\tJ,+} \ni \la \mapsto \mathrm{gch} \, H^0 ( \bQ_{\tJ} ( u t_{\beta} ), \cO_{\bQ_{\tJ} ( u t_{\beta} )} ( \la ) ) \hskip 5mm u \in W / W_{\tJ}, \beta \in Q^{\vee}_{\tJ}.
\]
In view of Theorem~\ref{coh-si}, this follows from Corollary~\ref{char-est} (see also~\cite[Proof of Proposition~5.11]{KNS17}).

We prove the second assertion. Since we pass $F_{\tJ} ( a ) \in \mathrm{Fun}_{P_\tJ} ( \bC_q P )$ for each $a \in K' _{\Gm \times H} ( \bQ_{\tJ}^{\ra} )$ to $\mathrm{Fun}_{P_\tJ}^{\mathrm{ess}} ( \bC_q P )$, we can restrict $F_{\tJ} ( a )$ to $P_{\tJ,++} \subset P_{\tJ}$. The collection of $\C_q P$-coefficients $\{ a_{u, \beta} \}_{u,\beta}$ of an element of $K^+ _{\Gm \times H} ( \bQ_{\tJ}^{\ra} )$ satisfies $a_{u, \beta} = 0$ for $\beta \not\ge \beta_0$ for some $\beta_0 \in Q^{\vee}_{\tJ,+}$, each coefficient $a_{u,\beta}$ is a Laurent polynomial with a uniform upper bound on its $q$-degree, and $\sum_{u,\beta} |a_{u,\beta}| q^{-\left< \beta, \rho_{\tJ} \right>}\in \bC_q P$ by unwinding the definition.

In view of~\cite[Corollary~4.31 and Proposition~D.1]{KNS17} and Theorem~\ref{coh-si}, we have $\left< \beta - \beta_0,\la \right> \ge \left< \beta - \beta_0, \rho_{\tJ} \right>$ ($\Leftrightarrow \left< \beta, - \la + \rho_{\tJ} \right> \le \left< \beta_0, - \la + \rho_{\tJ} \right>$) and
\begin{align}\nonumber
& q^{\left< \beta, \rho_{\tJ} \right>} \mathrm{gch} \, H^0 ( \bQ_\tJ ( u t_{\beta} ), \cO_{\bQ_\tJ ( u t_{\beta} )} ( \la ) ) = q^{\left< \beta, - \la + \rho_{\tJ} \right>} \mathrm{gch} \, H^0 ( \bQ_\tJ ( u ), \cO_{\bQ_\tJ ( u )} ( \la ) )\\
&\le q^{\left< \beta_0, - \la + \rho_{\tJ} \right>}\mathrm{gch} \, H^0 ( \bQ_\tJ ( e ), \cO_{\bQ_\tJ ( e )} ( \la ) ) = q^{\left< \beta_0, \rho_{\tJ} \right>}\mathrm{gch} \, H^0 ( \bQ_\tJ ( t_{\beta_0} ), \cO_{\bQ_\tJ ( t_{\beta_0} )} ( \la ) )\label{ch-est}
\end{align}
for each $\la \in P_{\tJ,++}$, $u \in W$, and $\beta_0\le\beta \in Q^{\vee}_{\tJ,+}$, where the inequality in~\eqref{ch-est} is understood to be coefficient-wise (that are in $\Z_{\ge 0}$) with respect to $\{q^m e^\la\}_{m \in \Z, \la \in P}$. Here the RHS of~\eqref{ch-est} belongs to $\bC_q P$ again by~\cite[Corollary~4.31]{KNS17} (cf.~\cite{BF14b}).
We set
\begin{equation}
a :=\sum_{u,\beta} |a_{u,\beta} | q^{-\left< \beta - \beta_0, \rho_{\tJ} \right>} = q^{\left< \beta_0, \rho_{\tJ} \right>} \sum_{u,\beta} |a_{u,\beta} | q^{-\left< \beta, \rho_{\tJ} \right>}  \in \bC_q P.\label{a-est}
\end{equation}
Using~\eqref{ch-est} and~\eqref{a-est}, we deduce
\[
\sum_{u,\beta} |a_{u,\beta}| \mathrm{gch} \, H^0 ( \bQ_\tJ ( u t_{\beta} ), \cO_{\bQ_\tJ ( u t_{\beta} )} ( \la ) ) \le a \cdot \mathrm{gch} \, H^0 ( \bQ_\tJ ( t_{\beta_0} ), \cO_{\bQ_\tJ ( t_{\beta_0} )} ( \la ) )
\]
for $\la \in P_{\tJ,++}$, which implies the (coefficient-wise) absolute convergence of our functional for each $\la \in P_{\tJ,++}$. This yields the desired map.
\end{proof}

We define the $(\Gm \times H)$-equivariant $K$-group of $\bQ_\tJ^{\ra}$ to be the image of $K^+_{\Gm \times H}(\bQ_\tJ^{\ra})$ in $\mathrm{Fun}_{P_\tJ}^{\mathrm{ess}}(\bC_q P)$, and denote it by $K_{\Gm \times H}(\bQ_\tJ^{\ra})$.

Here the $q=1$ specializations of $K_{\Gm \times H}'(\bQ_{\tJ}^{\ra})$ and $K_{\Gm \times H}(\bQ_{\tJ}^{\ra})$, induced by forgetting the $\Gm$-actions, are well defined since each coefficient of $[\cO_{\bQ_\tJ([w]_\tJ)}]$ ($w \in W_\af$) belongs to $\C_q P$. We denote them by $K_H'(\bQ_{\tJ}^{\ra})$ and $K_H(\bQ_{\tJ}^{\ra})$, respectively.

\begin{thm}[\cite{KNS17} Theorem~6.5 for the case $\tJ = \emptyset$]\label{HQc}
Let $\tJ \subset \tI$. For each $\mu \in P_\tJ$, there exists a $\C_q P$-linear map
\[
\Xi_{\tJ}^+ ( \mu ): K^+ _{\Gm \times H} ( \bQ_{\tJ}^{\ra} ) \longrightarrow K_{\Gm \times H}(\bQ_{\tJ}^{\ra}) \subset \mathrm{Fun}^{\mathrm{ess}}_{P_{\tJ}}(\bC_q P)
\]
defined by the rule
\begin{align*}
& K^+ _{\Gm \times H} ( \bQ_{\tJ}^{\ra} ) \, \ni \sum_{u\in W/W_\tJ,\ \beta\in Q^{\vee}_{\tJ}} a_{u,\beta}\,[\cO_{\bQ_{\tJ}(ut_{\beta})}] \hskip 10mm (a_{u,\beta} \in \C_q P)\\
& \longmapsto \left(\la \mapsto \begin{cases}\sum_{u,\beta} a_{u,\beta}\,\gch\, H^0\bigl(\bQ_{\tJ}^{\ra},\cO_{\bQ_{\tJ}(ut_{\beta})}(\la+\mu)\bigr) & (\la \in P_{\tJ,++}) \\0 & (\la \not\in P_{\tJ,++}) \end{cases} \right) \in \mathrm{Fun}_{P_{\tJ}}(\bC_q P),
\end{align*}
where we view the RHS in $\mathrm{Fun}^{\mathrm{ess}}_{P_{\tJ}}(\bC_q P)$ via the natural surjection
\[
\mathrm{Fun}_{P_{\tJ}}(\bC_q P)\twoheadrightarrow \mathrm{Fun}^{\mathrm{ess}}_{P_{\tJ}}(\bC_q P).
\]
Consequently, $\Xi_{\tJ}^+(\mu)$ gives rise to a $\C_q P$-linear endomorphism $\Xi_{\tJ}(\mu)$ of $K_{\Gm \times H}(\bQ_{\tJ}^{\ra})$.
\end{thm}

\begin{proof}
The reasoning we need is the same as that provided in~\cite[Proof of Theorem~6.5]{KNS17} and~\cite[Proof of Theorem~1.25]{Kat18c} in view of the definition of $K _{\Gm \times H} ( \bQ_{\tJ}^{\ra} )$ and Theorem~\ref{coh-si}.
\end{proof}

\subsection{$\sQ_{\tJ} ( \beta, w )$ has at worst rational singularities}

Let $\sX_{\tJ} (\beta)$ denote the subvariety of $\sGB_{\tJ, 2,\beta}$ such that the first marked point projects to $0 \in \P^1$, and the second marked point projects to $\infty \in \P^1$ under the projection of quasi-stable curves $C$ to the main component $C_0 \cong \P^1$. Let us denote the restrictions of $\mathtt{ev}_i$ $(i=1,2)$ to $\sX_{\tJ} (\beta)$ by the same letter. Since $\sX_{\tJ} (\beta)$ is a normal scheme that has at worst quotient singularities, we may regard it as a smooth stack (\cite{FP95}). As we know that $\sQ_{\tJ} ( \beta )$ is normal (\cite{Kat18d}), we conclude that the restriction of $\pi_{\tJ,2,\beta}$ to $\sX_{\tJ} (\beta)$ also gives a resolution of singularities of $\sQ_{\tJ} ( \beta )$.

For each $\beta \in Q^{\vee}_{\tJ,+}$ and $u \in W / W _{\tJ}$, we set $\sX_{\tJ} (\beta, u) := \mathtt{ev}_1^{-1} ( \sB_{\tJ} ( u ) ) \subset \sX_{\tJ} (\beta)$.

\begin{lem}\label{XJ}
Let $\tJ \subset \tI$. For each $\beta \in Q^{\vee}_{\tJ,+}$ and $u \in W / W _{\tJ}$, the variety $\sX_{\tJ} (\beta, u)$ is projective, normal, and has at worst rational singularities.
\end{lem}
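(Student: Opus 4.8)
The plan is to deduce the three properties of $\sX_{\tJ}(\beta,u)$ from the corresponding properties of the parabolic quasi-map Schubert varieties $\sQ_{\tJ}(\beta,u)$ together with the fact that $\pi_{\tJ,2,\beta}$ restricted to $\sX_{\tJ}(\beta)$ is a resolution of singularities (indeed, the map factors through $\sGB_{\tJ,0,\beta}$, and by Givental's main lemma together with Theorem \ref{rat-res} in the $\tJ=\emptyset$ case, this is even a rational resolution in the orbifold sense; the general case is the content of Theorem \ref{Jrat}, which the surrounding section is establishing). First I would observe that $\sX_{\tJ}(\beta,u) = \mathtt{ev}_1^{-1}(\sB_{\tJ}(u))$ maps onto $\sQ_{\tJ}(\beta,u)$: a quasi-map in $\sQ_{\tJ}(\beta,u)$ is by definition a limit of quasi-maps defined at $z=0$ with value in $\sB_\tJ(u)$, and the marked-point evaluation $\mathtt{ev}_1$ on $\sX_\tJ(\beta)$ records precisely the value at $0\in\P^1$ (along the main component $C_0$), so the preimage of the closed Schubert cell $\sB_\tJ(u)$ is exactly the closed subvariety lying over $\sQ_\tJ(\beta,u)$. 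Projectivity is then immediate: $\sB_\tJ(u)$ is closed in $\sB_\tJ$, $\mathtt{ev}_1$ is a morphism of projective varieties (or proper stacks), hence $\sX_\tJ(\beta,u)$ is a closed subvariety of the projective variety $\sX_\tJ(\beta)$.

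Next, for normality and rational singularities I would argue that $\sX_\tJ(\beta,u)\to\sQ_\tJ(\beta,u)$ is a resolution with the rationality property, and then invoke the standard fact that a variety admitting a resolution by a smooth (stack) with $R^{>0}\pi_*\cO = 0$ and $\pi_*\cO = \cO$ is normal with rational singularities. Concretely: since $\sX_\tJ(\beta)$ is a normal scheme with at worst quotient singularities we may treat it as a smooth stack, so $\sX_\tJ(\beta,u)$, being $\mathtt{ev}_1^{-1}$ of a subvariety, is a smooth stack over the locus where $\mathtt{ev}_1$ is transverse — but more robustly, $\sX_\tJ(\beta,u)$ is itself identified (via the two-marked-point graph space construction, exactly as in \cite{Kat18c}) with a fiber-product-type space whose normality and rational singularities are inherited from those of the Schubert variety $\sB_\tJ(u)$ in the flag manifold and the rational resolution $\pi_\tJ$. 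The key input here is Theorem \ref{Jrat} (stated just before this lemma in the section plan), which asserts that the relevant Schubert subvarieties of parabolic quasi-map spaces have rational singularities; combined with the Kempf-type vanishing for Schubert varieties of $\sB_\tJ$ and the standard behavior of rational singularities under fiber products with a smooth base, this yields the claim.

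The main obstacle I anticipate is bookkeeping the stack structure: $\sX_\tJ(\beta)$ has genuine orbifold (quotient) singularities coming from automorphisms of the domain curves, so "smooth" must be read as "smooth Deligne-Mumford stack," and every invocation of "rational singularities" / "$R^{>0}\pi_*\cO_{\sX}=0$" has to be made in the orbifold sense (as in Theorem \ref{rat-res}). One must check that pulling back by $\mathtt{ev}_1$ preserves this — i.e., that $\mathtt{ev}_1$ is flat along the Schubert stratum, or at least that the scheme-theoretic preimage $\mathtt{ev}_1^{-1}(\sB_\tJ(u))$ is reduced, irreducible (using $\sGB_{\tJ,n,\beta}$ irreducible, \cite{KP01}) and of the expected dimension so that no component collapses. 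I would handle this exactly as in the $\tJ=\emptyset$ case of \cite{Kat18c}: reduce to the statement that the composite $\sX_\tJ(\beta,u)\to\sQ_\tJ(\beta,u)$ has connected fibers with rational singularities along the generic locus, push the rationality statement down using the Leray spectral sequence, and then upgrade from "generic" to "everywhere" via the normality of $\sQ_\tJ(\beta)$ (\cite{Kat18d}) and semicontinuity. The remaining verifications — that $\mathtt{ev}_1$ restricted to $\sX_\tJ(\beta)$ actually surjects onto $\sB_\tJ$ and that its restriction over each open cell is a fibration — are routine given Theorem \ref{Dr} and the explicit DP-data description.
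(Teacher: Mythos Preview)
Your proposal has a genuine circularity and a logical misdirection that would make the argument fail as written.

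First, the circularity. You repeatedly invoke Theorem \ref{Jrat} (rational singularities of $\sQ_{\tJ}(\beta,u)$) as the ``key input.'' But in the paper's logical order, Theorem \ref{Jrat} is proved \emph{after} Lemma \ref{XJ} and explicitly uses it: the proof of Theorem \ref{Jrat} begins ``By Lemma \ref{XJ}, we know that $\sX_{\tJ}(\beta,u)$ has at worst rational singularities.'' So you cannot appeal to Theorem \ref{Jrat} here.

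Second, even granting that $\sQ_\tJ(\beta,u)$ has rational singularities, your deduction is in the wrong direction. A rational resolution $\pi:\sX\to\sQ$ (i.e.\ $\sX$ smooth, $\R^{>0}\pi_*\cO_\sX=0$, $\pi_*\cO_\sX=\cO_\sQ$) tells you that the \emph{target} $\sQ$ has rational singularities; it says nothing new about the \emph{source} $\sX$, which you already assumed smooth. You need $\sX_\tJ(\beta,u)$ to have rational singularities, and for that you need an argument intrinsic to $\sX_\tJ(\beta,u)$, not one that passes through $\sQ_\tJ(\beta,u)$.

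The paper's actual mechanism is the one you dismiss as ``routine'' at the very end: the $G$-equivariance of $\mathtt{ev}_1 : \sX_\tJ(\beta)\to\sB_\tJ$. Because $G$ acts transitively on $\sB_\tJ$, one gets a local product decomposition $\mathtt{ev}_1^{-1}(\bO_\tJ(e))\cong N_\tJ \times \mathtt{ev}_1^{-1}(p_e)$, and by $G$-translation $\mathtt{ev}_1$ is a locally trivial fibration. The fiber inherits (quotient, hence rational) singularities from $\sX_\tJ(\beta)$, and the base $\sB_\tJ(u)$ has rational singularities by classical Schubert variety theory. Since rational singularities are local and preserved under products, $\sX_\tJ(\beta,u)=\mathtt{ev}_1^{-1}(\sB_\tJ(u))$ has rational singularities. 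Projectivity is as you say. This is the whole proof; no input from $\sQ_\tJ$ is needed or available at this stage.
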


\begin{proof}
Since it is a closed subset of a projective variety $\sGB_{\tJ, 2,\beta}$, it follows that $\sX_{\tJ} (\beta, u)$ is projective. The evaluation map $\mathtt{ev}_1 : \sX_{\tJ} (\beta ) \rightarrow \sB_{\tJ}$ is homogeneous with respect to the $G$-action. Let $N_{\tJ} \subset N$ be the opposite unipotent radical of the conjugation of $P ( \tJ )$ by a lift of $w_0 \in W$ in $N_G ( H )$. By restricting to the open $N$-orbit $N_{\tJ} \times \{ p _e \} \cong \bO_{\tJ} ( e ) \subset \sB_{\tJ}$, we deduce that $\mathtt{ev}_1^{-1} ( \bO_{\tJ} ( e ) ) \cong N_{\tJ} \times \mathtt{ev}_1^{-1} ( p_e )$. By translating using the $G$-action, we conclude that $\mathtt{ev}_1$ is a locally trivial fibration. We know that $\sB_{\tJ} ( u )$ ($u \in W / W_\tJ$) is normal and has at worst rational singularities (see~\cite{KLS14}). Thus, the singularities of $\sX_{\tJ} (\beta, u)$ are locally a product of two rational singularities. From basic properties of rational singularities~\cite[\S5.1]{KM98}, we deduce that having rational singularities is a local condition and it is preserved by taking products. Therefore, we conclude that $\sX_{\tJ} (\beta, u)$ has at worst rational singularities (and the normality is its consequence).
\end{proof}

We have $\sX_{\tJ} (\beta ) = \sX_{\tJ} (\beta, e)$. The map $\pi_{\tJ, 2, \beta}$ restricts to give a $( B \times \Gm)$-equivariant birational proper map
$$\pi_{\tJ, \beta, u} : \sX_{\tJ} (\beta, u) \to \sQ_{\tJ} ( \beta, u )$$
by inspection (cf.~\cite[\S5.2]{Kat18d}). Let $\cO _{\sX_{\tJ} (\beta,u)} ( \la )$ denote the restriction of $\cO _{\sGB_{\tJ, 2, \beta}} ( \la )$ to $\sX_{\tJ} (\beta,u)$ for each $\la \in P_{\tJ}$ and $u \in W / W_{\tJ}$.

\begin{thm}[Koll\'ar~\cite{Kol86} Theorem~7.1]\label{Kol}
Let $f: X \to Z$ be a surjective map between projective varieties, $X$ smooth, and $Z$ normal. Let $F$ be the geometric generic fiber of $f$ and assume that $F$ is connected. The following two statements are equivalent:
\begin{enumerate}
\item $\R^{i} f_* \cO_X = 0$ for all $i > 0$;
\item $Z$ has rational singularities and $H^i( F, \cO_F ) = 0$ for all $i > 0$.
\end{enumerate}
\end{thm}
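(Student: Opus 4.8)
The plan is to reduce the statement to the case of a smooth base $Z$, and then to treat that case by Grothendieck--Serre duality together with Koll\'ar's torsion--freeness and decomposition theorems for higher direct images of dualizing sheaves. Everything takes place over $\C$, so resolution of singularities and generic smoothness are at our disposal; in particular $f$ is smooth over a dense open $U\subseteq Z$, hence the geometric generic fibre $F$ is smooth, and we may assume $X$ irreducible. Since $Z$ is normal and $F$ is geometrically connected we have $f_*\cO_X=\cO_Z$ (so, by Stein factorization, every fibre of $f$ is connected); and since $Z$ has rational singularities exactly when it is normal and $\R\pi_*\cO_{\widetilde Z}=\cO_Z$ for a resolution $\pi\colon\widetilde Z\to Z$ (see \cite[\S5.1]{KM98}), condition (2) is equivalent to the conjunction ``$\R\pi_*\cO_{\widetilde Z}=\cO_Z$ and $H^{>0}(F,\cO_F)=0$''.

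First I would reduce to a smooth base. Fix a resolution $\pi\colon\widetilde Z\to Z$, let $V$ be a resolution of singularities of the main (dominant) component of $X\times_Z\widetilde Z$, and write $p\colon V\to X$, $q\colon V\to\widetilde Z$ and $h:=f\circ p=\pi\circ q\colon V\to Z$ for the structure maps. Then $V$ and $\widetilde Z$ are smooth projective, $p$ is proper and birational, and $q$ is surjective with geometric generic fibre again $F$ (the generic point of $Z$ lies in the locus over which $Z$ is smooth and $f$ is smooth, where $V$, $X$ and $X\times_Z\widetilde Z$ all coincide). Because $X$ is smooth, $\R p_*\cO_V=\cO_X$, so $\R f_*\cO_X=\R h_*\cO_V=\R\pi_*\R q_*\cO_V$. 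Granting the smooth--base case of the theorem for $q$ (which I establish below), ``$\R^{>0}q_*\cO_V=0$'' is equivalent to ``$H^{>0}(F,\cO_F)=0$'', and then $\R q_*\cO_V=\cO_{\widetilde Z}$, so the displayed identity reads $\R f_*\cO_X=\R\pi_*\cO_{\widetilde Z}$. A short chase then gives the biconditional: if (1) holds, restriction to the generic point of $Z$ yields $H^{>0}(F,\cO_F)=0$, hence $\R\pi_*\cO_{\widetilde Z}=\R f_*\cO_X=\cO_Z$, i.e.\ (2); conversely, (2) supplies $H^{>0}(F,\cO_F)=0$ as well as $\R\pi_*\cO_{\widetilde Z}=\cO_Z$, whence $\R f_*\cO_X=\cO_Z$, i.e.\ (1).

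It then remains to treat the case $Z$ smooth. Here (1)$\Rightarrow$(2) is immediate: $Z$ has rational singularities trivially, and passing to stalks at the generic point of $Z$ turns $\R^{>0}f_*\cO_X=0$ into $H^{>0}(F,\cO_F)=0$. For (2)$\Rightarrow$(1) I would put $d:=\dim X-\dim Z$ and use Grothendieck--Serre duality for the proper map $f$ between smooth varieties:
\[
\R f_*\cO_X\ \cong\ \R\cHom_Z\bigl(\R f_*\omega_X,\ \omega_Z\bigr)[-d].
\]
By Koll\'ar's theorems \cite{Kol86}, $\R f_*\omega_X\cong\bigoplus_q\R^q f_*\omega_X[-q]$, each $\R^q f_*\omega_X$ is torsion--free, and $\R^q f_*\omega_X=0$ for $q>d$. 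Over the open locus $U$ on which $f$ is smooth, relative duality identifies $\R^q f_*\omega_X|_U$, up to a twist by $\omega_U$, with the dual of $\R^{d-q}f_*\cO_X|_U$, whose fibres are $H^{d-q}(F,\cO_F)$; these vanish for $q<d$ by hypothesis, so for $q<d$ the torsion--free sheaf $\R^q f_*\omega_X$ is generically zero, hence zero. Thus $\R f_*\omega_X\cong(\R^d f_*\omega_X)[-d]$, and the duality isomorphism collapses to $\R f_*\cO_X\cong\R\cHom_Z(\R^d f_*\omega_X,\omega_Z)$. Taking the zeroth cohomology sheaf and using $f_*\cO_X=\cO_Z$ forces $\cHom_Z(\R^d f_*\omega_X,\omega_Z)=\cO_Z$; since $\R^d f_*\omega_X$ is torsion--free of rank one and reflexive on the smooth variety $Z$ (again by \cite{Kol86}), it must be the line bundle $\omega_Z$, so $\R\cHom_Z(\R^d f_*\omega_X,\omega_Z)=\R\cHom_Z(\omega_Z,\omega_Z)=\cO_Z$ is concentrated in degree $0$; that is, $\R^{>0}f_*\cO_X=0$.

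The step I expect to be the real obstacle is the last one, and inside it the appeal to Koll\'ar's torsion--freeness and decomposition theorems for $\R^q f_*\omega_X$, together with the reflexivity of the top direct image. These lie far deeper than the formal manipulations around them: they are Hodge--theoretic, resting ultimately on the $E_1$--degeneration of the Hodge--de Rham spectral sequence and on properties of polarizable variations of Hodge structure (equivalently, on Koll\'ar's injectivity theorem). By contrast, the reduction to a smooth base is purely formal once one has resolution of singularities and the identity $\R p_*\cO_V=\cO_X$.
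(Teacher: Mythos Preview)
The paper gives no proof of this theorem: it is quoted as Koll\'ar's Theorem~7.1 from \cite{Kol86} and used as a black box in the proof of Theorem~\ref{Jrat}. There is therefore nothing in the paper to compare your argument against.

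That said, your sketch is essentially Koll\'ar's own strategy: reduce to a smooth base via a resolution diagram and the identity $\R p_*\cO_V=\cO_X$, and over a smooth base combine Grothendieck duality with the torsion-freeness and decomposition theorems for $\R^q f_*\omega_X$ proved earlier in \cite{Kol86}. The reduction step and the implication (1)$\Rightarrow$(2) are clean. For (2)$\Rightarrow$(1) over smooth $Z$, the one place I would tighten is the assertion that $\R^d f_*\omega_X$ is \emph{reflexive}: torsion-freeness (which is what \cite{Kol86} Theorem~2.1 states) does not by itself give this, and a torsion-free rank-one sheaf on a smooth variety need not be invertible (e.g.\ the ideal of a point in $\mathbb A^2$). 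You can close this gap without leaving your setup: having established $\R f_*\omega_X\simeq M[-d]$ with $M:=\R^d f_*\omega_X$, apply the dualizing functor $D_Z=\R\cHom(-,\omega_Z[\dim Z])$ to your identity $\R f_*\cO_X\simeq \R\cHom(M,\omega_Z)$ and use Grothendieck duality once more to get $D_ZD_Z(M)\simeq M$. On smooth $Z$ this forces $M$ to be maximal Cohen--Macaulay, hence locally free, hence (being rank one) a line bundle; the rest of your argument then goes through verbatim.
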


\begin{thm}\label{Jrat}
Let $\tJ \subset \tI$. For each $\beta \in Q^{\vee}_{\tJ,+}$ and $u \in W / W_{\tJ}$, the variety $\sQ_{\tJ} ( \beta, u )$ has at worst rational singularities. In addition, we have
$$( \pi_{\tJ, \beta, u} )_* \cO_{\sX_{\tJ} (\beta, u)} \cong \cO_{\sQ_{\tJ} ( \beta, u )}, \hskip 5mm \R^{>0} ( \pi_{\tJ, \beta, u} )_* \cO_{\sX_{\tJ} (\beta, u)} \cong \{ 0 \}.$$
\end{thm}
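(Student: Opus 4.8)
The plan is to deduce the whole statement from Koll\'ar's criterion (Theorem \ref{Kol}) and Lemma \ref{XJ}, thereby reducing it to the single assertion that $\sQ_{\tJ} ( \beta, u )$ is normal and has at worst rational singularities, and then to obtain that assertion by bootstrapping from the Braverman--Finkelberg base case through the semi-infinite machinery of \cite{Kat18c,Kat18d}.

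\emph{Reduction to a property of $\sQ_{\tJ}(\beta,u)$.} By Lemma \ref{XJ} the variety $\sX_{\tJ} ( \beta, u )$ has at worst rational singularities, so I would pick a resolution $\rho : \widetilde{X} \to \sX_{\tJ} ( \beta, u )$; then $\R \rho_* \cO_{\widetilde{X}} = \cO_{\sX_{\tJ} ( \beta, u )}$ with no higher terms, whence $\R^{i} ( \pi_{\tJ, \beta, u} \circ \rho )_* \cO_{\widetilde{X}} = \R^{i} ( \pi_{\tJ, \beta, u} )_* \cO_{\sX_{\tJ} ( \beta, u )}$ for every $i$ by composing derived pushforwards. (Alternatively one applies the orbifold form of Theorem \ref{Kol} directly to $\pi_{\tJ, \beta, u}$, the source being a smooth stack.) Once $\sQ_{\tJ} ( \beta, u )$ is known to be normal, coherence of proper pushforward together with the birationality of $\pi_{\tJ, \beta, u}$ forces $( \pi_{\tJ, \beta, u} )_* \cO_{\sX_{\tJ} ( \beta, u )} = \cO_{\sQ_{\tJ} ( \beta, u )}$, which is the first displayed identity. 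The composite $f := \pi_{\tJ, \beta, u} \circ \rho : \widetilde{X} \to \sQ_{\tJ} ( \beta, u )$ is then a proper surjection from a smooth projective variety onto the normal projective variety $\sQ_{\tJ} ( \beta, u )$, and, being birational, it has a single reduced point as geometric generic fiber, so the higher cohomology of that fiber vanishes automatically. Theorem \ref{Kol} applied to $f$ therefore yields that $\R^{>0} f_* \cO_{\widetilde{X}} = 0$ holds if and only if $\sQ_{\tJ} ( \beta, u )$ has rational singularities; by the identification of derived pushforwards above, this is exactly the equivalence of the remaining vanishing $\R^{>0} ( \pi_{\tJ, \beta, u} )_* \cO_{\sX_{\tJ} ( \beta, u )} = 0$ with the rationality of $\sQ_{\tJ} ( \beta, u )$. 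So it remains to prove that $\sQ_{\tJ}(\beta,u)$ is normal and has at worst rational singularities.

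\emph{Normality and the base case.} The normality of $\sQ_{\tJ} ( \beta ) = \sQ_{\tJ} ( \beta, e )$ is the input from \cite{Kat18d} quoted before Lemma \ref{XJ}; for general $u$ the normality of $\sQ_{\tJ} ( \beta, u )$ follows from that of the Schubert subvarieties of $\bQ_{\tJ}^{\ra}$ through the embedding $\sQ_{\tJ} ( \beta, u ) \subset \bQ_{\tJ} ( u )$, which rests on the Frobenius splittings of \cite{Kat18d} and the line-bundle cohomology vanishing of Theorems \ref{char-limit} and \ref{coh-si}. For the rational singularities, the case $\tJ = \emptyset$, $u = e$ is immediate from Theorem \ref{rat-res}: since $\pi_{0, \beta}$ is a rational resolution, $\sQ ( \beta )$ has rational singularities. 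The extension to arbitrary $u$ with $\tJ = \emptyset$ is provided by \cite{Kat18c}, where $\sQ ( \beta, u )$ is shown to have rational singularities by means of compatible resolutions---equivalently, compatible Frobenius splittings---together with the cohomology vanishing on the semi-infinite flag manifold.

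\emph{The parabolic case and the main obstacle.} For general $\tJ$ I would rerun the argument of \cite{Kat18c} with $\bQ^{\ra}$ replaced by $\bQ_{\tJ}^{\ra}$ throughout, the requisite parabolic inputs being Theorems \ref{si-Bruhat}, \ref{char-limit}, \ref{coh-si} and Corollary \ref{Iproj}: one combines the locally trivial fibration $\mathtt{ev}_1 : \sX_{\tJ} ( \beta, u ) \to \sB_{\tJ} ( u )$ with smooth proper fiber (established in the proof of Lemma \ref{XJ}), the chain of embeddings $\sB_{\tJ} ( u ) \subset \sQ_{\tJ} ( \beta, u ) \subset \bQ_{\tJ} ( u )$, and the quotient map $\phi_{\tJ}$, so as to transport the rationality from the full-flag picture and the cohomology vanishing on $\bQ_{\tJ}(u)$ down to $\sQ_{\tJ} ( \beta, u )$. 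I expect the main obstacle to be precisely this transport. The evaluation-at-$z = 0$ map is only partially defined on $\sQ_{\tJ} ( \beta )$---undefined exactly where a quasi-map acquires defect over $0 \in \P^1$---so $\sQ_{\tJ} ( \beta, u )$ does not itself fiber over $\sB_{\tJ} ( u )$; one must argue on the resolution $\sX_{\tJ} ( \beta, u )$, where $\mathtt{ev}_1$ is globally defined, and then descend the higher-pushforward vanishing, and it is at this step that the parabolic analogues of the semi-infinite cohomology vanishing (Theorems \ref{char-limit} and \ref{coh-si}), rather than their full-flag counterparts, become indispensable.
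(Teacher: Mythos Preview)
Your reduction step is correct: once normality of $\sQ_{\tJ}(\beta,u)$ is granted (and it is---the paper simply cites \cite[Corollary 4.20]{Kat18d}, so your detour through the embedding into $\bQ_{\tJ}(u)$ is unnecessary), Koll\'ar's criterion applied to $\pi_{\tJ,\beta,u}$ precomposed with a resolution of $\sX_{\tJ}(\beta,u)$ does make the two displayed identities equivalent to the single assertion that $\sQ_{\tJ}(\beta,u)$ has rational singularities. The gap is that you never establish this last assertion. Your plan to ``rerun the argument of \cite{Kat18c}'' with $\bQ^{\ra}$ replaced by $\bQ_{\tJ}^{\ra}$, invoking the parabolic cohomology vanishing of Theorems \ref{char-limit} and \ref{coh-si}, remains a gesture; you yourself flag the descent step as the main unresolved obstacle, and nothing in the proposal bridges it.

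The paper's route is different and bypasses the semi-infinite side entirely for this theorem. One picks $\beta^+ \in Q^{\vee}_+$ with $[\beta^+]_{\tJ} = \beta$ large enough that $\eta : \sQ(\beta^+,u) \to \sQ_{\tJ}(\beta,u)$ is surjective (\cite[Remark 3.36]{Kat18d}), and forms the commutative square with $\widetilde{\eta} : \sX(\beta^+,u) \to \sX_{\tJ}(\beta,u)$ on top. Two black-box inputs from the full-flag picture do all the work: $\R^{\bullet}(\pi_{\beta^+,u})_* \cO = \cO$ from \cite[Theorem 4.10]{Kat18c} and $\R^{\bullet}\eta_* \cO = \cO$ from \cite[Corollary 3.35]{Kat18d}. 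Composing via Leray gives $\R^{\bullet}(\pi_{\tJ,\beta,u} \circ \widetilde{\eta})_* \cO = \cO_{\sQ_{\tJ}(\beta,u)}$; since $\pi_{\tJ,\beta,u}$ is birational, the geometric generic fiber of the composite coincides with that of $\widetilde{\eta}$ and thus has vanishing higher cohomology. Now Koll\'ar is applied not to the vertical map $\pi_{\tJ,\beta,u}$ but to the \emph{horizontal} map $\widetilde{\eta}$, whose target $\sX_{\tJ}(\beta,u)$ already has rational singularities by Lemma \ref{XJ}: this yields $\R^{\bullet}\widetilde{\eta}_* \cO = \cO$, and a second application of Leray along $\pi_{\tJ,\beta,u} \circ \widetilde{\eta}$ produces the displayed vanishing directly, from which rational singularities of $\sQ_{\tJ}(\beta,u)$ follow by \cite[Theorem 5.10]{KM98}. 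The missing idea in your proposal is precisely this diagram: rather than reproving a parabolic analogue of \cite{Kat18c} from scratch, the paper uses the full-flag case as input and transfers it across $\eta$ and $\widetilde{\eta}$.
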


\begin{proof}
By~\cite[Corollary~5.20]{Kat18d}, the variety $\sQ_{\tJ} ( \beta, u )$ is normal. By Lemma~\ref{XJ}, we know that $\sX_{\tJ} ( \beta, u )$ has at worst rational singularities. The same is true for $\tJ = \emptyset$ by~\cite{BF14a,FP95}. The coarse moduli property of $\sX ( \beta )$ yields a morphism $\widetilde{\eta}: \sX ( \beta^+ ) \rightarrow \sX_{\tJ} ( \beta )$ for every $\beta^+ \in Q^{\vee}_+$ such that $\beta = [\beta^+]_{\tJ}$. In view of~\cite[Remark~4.36]{Kat18d} (cf.~Woodward~\cite[Lemma~1]{Woo05}), we can choose $\beta^+$ such that the natural map $\eta : \sQ(\beta^+,u) \rightarrow \sQ_{\tJ}(\beta,u)$ is surjective. (If desired, one may further take $\beta^+$ to be the distinguished choice in~\cite[Lemma~1]{Woo05}; for this choice, $\eta$ enjoys the birationality property described in~\cite[Theorem~3]{Woo05}.)

We have the following commutative diagram:
$$
\xymatrix{
\sX ( \beta^+, u ) \ar[r]^{\widetilde{\eta}} \ar[d]_{\pi_{\beta^+,u}} & \sX _{\tJ} ( \beta, u ) \ar[d]^{\pi_{\tJ,\beta,u}}\\
\sQ ( \beta^+, u ) \ar[r]^{\eta} & \sQ _{\tJ} ( \beta, u ).
}
$$
Here the maps $\pi_{\beta^+,u}$ and $\pi_{\tJ,\beta,u}$ are birational, and $\sX ( \beta^+, u )$ and $\sX _{\tJ} ( \beta, u )$ are projective. Thus, the map $\widetilde{\eta}$ is also surjective by the valuative criterion of properness. Moreover, we have $\R^{\bullet} \eta_{*} \cO_{\sQ ( \beta^+, u )} = \cO_{\sQ_{\tJ} ( \beta, u )}$ by~\cite[Corollary~4.35]{Kat18d}. We find $\R^{\bullet} ( \pi_{\beta^+,u} )_{*} \cO_{\sX ( \beta^+, u )} = \cO_{\sQ ( \beta^+, u )}$ by~\cite[Theorem~4.13]{Kat18c}. By the Leray spectral sequence applied to the composition map $\eta \circ \pi_{\beta^+,u}$, we obtain
\[
\R^{\bullet} ( \eta \circ \pi_{\beta^+,u} )_* \cO_{\sX ( \beta^+, u )} = \cO_{\sQ _{\tJ} ( \beta, u )}.
\]
Hence the geometric generic fiber of $( \eta \circ \pi_{\beta^+,u} )$ is connected (by Stein factorization) and has trivial higher cohomology. Since $\pi_{\tJ,\beta,u}$ is birational, the geometric generic fiber of $( \eta \circ \pi_{\beta^+,u} )$ is the same as that of $\widetilde{\eta}$. Therefore, we conclude
\begin{equation}
\R^{\bullet} \widetilde{\eta}_{*} \cO_{\sX ( \beta^+, u )} = \cO_{\sX_{\tJ} ( \beta, u )}\label{epush}
\end{equation}
by Theorem~\ref{Kol} (by replacing $\sX ( \beta^+, u )$ with its resolution of singularities if necessary, cf.~\cite[Theorem~5.10]{KM98}). By the above commutative diagram, the Leray spectral sequence applied to the composition map $\pi_{\tJ, \beta, u} \circ \widetilde{\eta} = \eta \circ \pi_{\beta^+,u}$ implies
$$\R^{\bullet} ( \pi_{\tJ, \beta, u} )_* \cO_{\sX _{\tJ} ( \beta, u )} \cong \cO_{\sQ_{\tJ} ( \beta, u )}$$
from (\ref{epush}). This shows that $\sQ_{\tJ} ( \beta, u )$ has at worst rational singularities by~\cite[Theorem~5.10]{KM98}.
\end{proof}

\begin{cor}\label{cohXQ}
Keep the setting of Theorem~\ref{Jrat}. For each $\beta \in Q^{\vee}_{\tJ,+}$, $u \in W / W_{\tJ}$, and $\la \in P_{\tJ}$, we have
$$\chi_q ( \sX_{\tJ} (\beta, u), \cO_{\sX_{\tJ} (\beta, u)} ( \la ) ) = \chi_q ( \sQ_{\tJ} (\beta, u), \cO_{\sQ_{\tJ} (\beta, u)} ( \la ) ) \in \C_q P.$$
\end{cor}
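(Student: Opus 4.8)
The plan is to push everything down along the birational, proper map $\pi_{\tJ,\beta,u}\colon \sX_{\tJ}(\beta,u)\to \sQ_{\tJ}(\beta,u)$ and feed in Theorem \ref{Jrat}. First I would unwind the definitions of the relevant line bundles from \S\ref{GQL} and \S\ref{sec:QM}: since $\pi_{\tJ,\beta,u}$ is the restriction of $\pi_{\tJ,2,\beta}$, and $\cO_{\sGB_{\tJ,2,\beta}}(\la)=\pi_{\tJ,2,\beta}^*\cO_{\sQ_{\tJ}(\beta)}(\la)$ while $\cO_{\sX_{\tJ}(\beta,u)}(\la)$ and $\cO_{\sQ_{\tJ}(\beta,u)}(\la)$ are the respective restrictions, one obtains a $(H\times\Gm)$-equivariant identification $\cO_{\sX_{\tJ}(\beta,u)}(\la)\cong \pi_{\tJ,\beta,u}^*\cO_{\sQ_{\tJ}(\beta,u)}(\la)$; equivariance is automatic because $\pi_{\tJ,\beta,u}$ is $(B\times\Gm)$-equivariant.

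Next, apply the projection formula. Because $\cO_{\sQ_{\tJ}(\beta,u)}(\la)$ is a line bundle on the target, Theorem \ref{Jrat} upgrades to
$$(\pi_{\tJ,\beta,u})_*\cO_{\sX_{\tJ}(\beta,u)}(\la)\cong \cO_{\sQ_{\tJ}(\beta,u)}(\la),\qquad \R^{>0}(\pi_{\tJ,\beta,u})_*\cO_{\sX_{\tJ}(\beta,u)}(\la)\cong\{0\}$$
as $(H\times\Gm)$-equivariant coherent sheaves; the orbifold structure of $\sX_{\tJ}(\beta,u)$ is harmless here, since both the projection formula and the vanishing are local on the (honest scheme) target, where $\pi_{\tJ,\beta,u}$ is a tame morphism. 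The Leray spectral sequence for $\pi_{\tJ,\beta,u}$ then collapses and yields $(H\times\Gm)$-module isomorphisms
$$H^{i}(\sX_{\tJ}(\beta,u),\cO_{\sX_{\tJ}(\beta,u)}(\la))\cong H^{i}(\sQ_{\tJ}(\beta,u),\cO_{\sQ_{\tJ}(\beta,u)}(\la))\qquad(i\ge 0).$$

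Finally, by Lemma \ref{XJ} and Theorem \ref{Jrat} both $\sX_{\tJ}(\beta,u)$ and $\sQ_{\tJ}(\beta,u)$ are projective, so these cohomology groups are finite-dimensional in each $q$-degree, vanish for $i$ large, and have $q$-degree bounded above; hence the alternating sums of graded characters make sense and coincide, giving $\chi(\sX_{\tJ}(\beta,u),\cO_{\sX_{\tJ}(\beta,u)}(\la))=\chi(\sQ_{\tJ}(\beta,u),\cO_{\sQ_{\tJ}(\beta,u)}(\la))$ in $\C_q P$. Since the right-hand side lies in $\C_q^0 P$ by \S\ref{sec:QM}, so does the left-hand side, which is the assertion. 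There is no genuine obstacle: the geometric content is entirely carried by Theorem \ref{Jrat}, and what remains is the routine Leray/projection-formula manipulation, the only points of care being to keep track of the $(H\times\Gm)$-equivariance and the orbifold bookkeeping.
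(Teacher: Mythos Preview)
Your proof is correct and follows exactly the approach the paper takes: the paper's proof consists of the single sentence ``Apply the projection formula to Theorem \ref{Jrat}.'' You have spelled out precisely this, with the added (and appropriate) bookkeeping about equivariance, the Leray spectral sequence, and the orbifold structure.
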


\begin{proof}
Apply the projection formula to Theorem~\ref{Jrat}.
\end{proof}

For $\vec{n} = \{ n_i \}_{i \in \tJ^c} \in \Z^{\tJ^c}_{\ge 0}$, we set $x^{\vec{n}} := \prod_{i \in \tJ^c} x_i^{n_i}$. For $\la \in P$, we set $\la[\vec{n}] := \la - \sum_{i \in \tJ^c} n_i \varpi_i$.

\begin{thm}[Iritani-Milanov-Tonita~\cite{IMT15}, cf. Givental-Lee~\cite{GL03}]\label{loc}
Let
\[
\sum_{\beta \in Q^{\vee}_{\tJ, +}, u \in W / W_{\tJ}, \vec{n} \in \Z^{\tJ^c}_{\ge 0}} f_{\beta, u, \vec{n}} ( q ) x^{\vec{n}} Q^{\beta} \in ( \C_q^0 P ) [ \{ x_i \}_{i \in \tJ^c} ][\![Q^{\vee}_{\tJ,+}]\!]
\]
be such that
\begin{equation}
\sum_{\beta \in Q^{\vee}_{\tJ,+}, u \in W / W_{\tJ}, \vec{n} \in \Z^{\tJ^c}_{\ge 0}} f_{\beta, u, \vec{n}} ( q ) \left( \prod_{i \in \tJ^c} A_i(q)^{n_i} \right) Q^{\beta} [\cO_{\sB_\tJ ( u )}]= 0 \in qK_{\Gm \times H}^{\wedge} ( \sB_\tJ ).\label{W-eq-J}
\end{equation}
Here the shift operators $A_i ( q )$ and $A_j (q)$ $(i,j \in \tJ^c)$ commute with each other, and hence~\eqref{W-eq-J} is independent of the chosen order of the factors $A_i(q)$. Then the following identities hold:
\[
\sum_{\beta \in Q^{\vee}_{\tJ,+}, u \in W / W_{\tJ}, \vec{n} \in \Z^{\tJ^c}_{\ge 0}} f_{\beta, u, \vec{n}} ( q ) q^{- \left< \beta, \la[\vec{n}] \right>} \chi_q ( \sX_{\tJ} ( \gamma - \beta, u ), \cO_{\sX_{\tJ} ( \gamma - \beta, u )} ( \la[\vec{n}] ) ) = 0\]
for each $\la \in P_{\tJ,+}$ and $\gamma \in Q^{\vee}_{\tJ,+}$, where we set
\[
\chi_q ( \sX_{\tJ} ( \gamma - \beta, u ), \cO_{\sX_{\tJ} ( \gamma - \beta, u )} ( \la[\vec{n}] ) ) := 0 \hskip 10mm \text{if} \hskip 5mm \gamma - \beta \not\in Q^{\vee}_{\tJ,+}.
\]
\end{thm}

\begin{proof}
The assertion follows by plugging~\eqref{W-eq-J} into~\cite[Proposition~2.20]{IMT15} and observing that $A_i(q)$ corresponds to twisting by $\cO ( - \varpi_i )$ up to $q^{\left< \beta, \varpi_i \right>}$, $Q_i$ twists the Novikov variable (and hence the degree of the stable map spaces), and the effect of $\cO_{\sB_\tJ ( u )}$ is to restrict the whole variety to $\sX_{\tJ} ( \bullet, u )$ via $\mathtt{ev}_1^*$. It can be also seen as a variant of~\cite[Theorem~3.8 and Theorem~3.9]{Kat18c}.
\end{proof}

\subsection{Comparison of equivariant $K$-groups}

\begin{thm}\label{qKcomp}
Let $\tJ \subset \tI$. We have a $\C _q P$-module isomorphism
$$\Psi_{\tJ,q} : qK _{\Gm \times H} ( \sB_{\tJ} )_\lo \stackrel{\cong}{\longrightarrow} K'_{\Gm \times H} ( \bQ_{\tJ}^{\ra} )$$
such that
\begin{enumerate}
\item $\Psi_{\tJ,q} ( [\cO_{\sB_{\tJ} ( u )} ] Q^{\beta} ) = [\cO_{\bQ_{\tJ} ( u t_{\beta} )}]$ for each $u \in W / W_\tJ$ and $\beta \in Q^{\vee}_{\tJ}$;
\item $\Psi_{\tJ,q} ( A_i ( \bullet ) ) = \Xi_{\tJ} ( - \varpi_i ) ( \Psi_{\tJ,q} ( \bullet ) )$ for each $i \in \tJ^c$. In particular, $\Xi_{\tJ} ( - \varpi_i )$ preserves the image of $K'_{\Gm \times H} ( \bQ_{\tJ}^{\ra} )$ in $K_{\Gm \times H} ( \bQ_{\tJ}^{\ra} )$ defined under the natural map
\[
K'_{\Gm \times H} ( \bQ_{\tJ}^{\ra} ) \subset K^+_{\Gm \times H} ( \bQ_{\tJ}^{\ra} ) \longrightarrow K_{\Gm \times H} ( \bQ_{\tJ}^{\ra} ) \subset \mathrm{Fun}_{P_{\tJ}}^{\mathrm{ess}} ( \bC_q P ).
\]
\end{enumerate}
\end{thm}

\begin{proof}
By the definitions of $qK _{\Gm \times H} ( \sB_{\tJ} )_\lo$ and $K'_{\Gm \times H} ( \bQ_{\tJ}^{\ra} )$, we find that $\Psi_{\tJ,q}$ is a $\C _q P$-linear isomorphism.

For each $u \in W / W_{\tJ}$ and $i \in \tJ^c$, we have a finite linear combination
\[
A_i(q) ( [\cO_{\sB_{\tJ} ( u )}] ) = \!\! \sum_{v \in W / W_{\tJ}, \gamma \in Q^{\vee}_{\tJ, +}} \!\! a_{i,u}^{v,\gamma} Q^{\gamma} [\cO_{\sB_{\tJ} ( v )}] \hskip 5mm a_{i,u}^{v,\gamma} \in \C _q P
\]
by~\cite[Appendix~B]{ACT18} and~\cite[Proposition~2.10]{IMT15}. In particular, we have $A_i (q) ( [\cO_{\sB_{\tJ} ( u )}] ) \in qK _{\Gm \times H} ( \sB_{\tJ} )$.

Applying Theorem~\ref{loc} and Corollary~\ref{cohXQ}, we have
\begin{equation}
\chi_q ( \sQ_{\tJ} ( \beta, u ), \cO_{\sQ_{\tJ} ( \beta, u )} ( \la - \varpi_i ) ) = \!\! \sum_{v \in W / W_{\tJ}, \gamma \in Q^{\vee}_{\tJ, +}} \!\! a_{i,u}^{v,\gamma} q^{- \left< \gamma, \la \right>} \chi_q ( \sQ_{\tJ} ( \beta - \gamma, v ), \cO_{\sQ_{\tJ} ( \beta - \gamma, v )} ( \la ) )\label{Aeq}
\end{equation}
for each $\beta \in Q^{\vee}_{\tJ,+}$ and $\la \in P_{\tJ}$. Here the summand on the right-hand side of~\eqref{Aeq} vanishes whenever
$\beta-\gamma \notin Q^{\vee}_{\tJ,+}$ by Theorem~\ref{loc}, in agreement with our convention~\eqref{Jnondom}. Taking the limit $\beta \to \infty$ (cf.~\cite[Proposition~D.1]{KNS17}), we obtain
$$\gch \, H^0 ( \bQ_{\tJ} ( u ), \cO_{\bQ_{\tJ} ( u )} ( \la - \varpi_i ) ) = \! \sum_{v \in W / W_{\tJ}, \gamma \in Q^{\vee}_{\tJ, +}} \! a_{i,u}^{v,\gamma} \gch \, H^0 ( \bQ_{\tJ} ( v t_{\gamma} ), \cO_{\bQ_{\tJ} ( v t_{\gamma} )} ( \la ) )$$
for each $\la \in P_{\tJ,+}$ by Theorem~\ref{chi-limit}. This implies
$$[\cO_{\bQ_{\tJ} ( u )} ( - \varpi_i )] = \! \sum_{v \in W / W_{\tJ}, \gamma \in Q^{\vee}_{\tJ, +}} \! a_{i,u}^{v,\gamma} [ \cO_{\bQ_{\tJ} ( v t_{\gamma} )} ] \in K_{\Gm \times H} ( \bQ_{\tJ}^{\ra} )$$
in view of Theorem~\ref{ch-inj}. Hence, we deduce
\begin{equation}
\Psi_{\tJ,q} ( A_i (  [\cO_{\sB_{\tJ} ( u )}] ) ) = \Xi_\tJ ( - \varpi_i ) ( \Psi_{\tJ,q} ( [\cO_{\sB_{\tJ} ( u )}] ) ) \hskip 5mm u \in W / W_{\tJ},\label{PAu}
\end{equation}
where the equality is in $K_{\Gm \times H}' ( \bQ_{\tJ} ^{\ra} )$. Since $\Psi_{\tJ,q}, A_i$, and $\Xi_\tJ ( - \varpi_i )$ ($i \in \tJ^c$) are $\C _q P$-linear, we conclude the result.
\end{proof}

\begin{cor}[of the proof of Theorem~\ref{qKcomp}]\label{exact}
For each $i \in \tI$, $w \in W_\af$, and $\la \in P$, we have
\[
\gch \, H^0 ( \bQ(w),\cO_{\bQ(w)} ( \la - \varpi_i ) ) = \sum_{v \in W_\af} a_{i,w}^v \gch \, H^0 ( \bQ(v),\cO_{\bQ(v)} ( \la ) ),
\]
where $\{a_{i,w}^v\}_{v \in W_\af} \subset \C_q P$ and the sum on the RHS is finite.
\end{cor}

\begin{proof}
In case $(\la - \varpi_i) \in P_+$, the assertion is obtained as the limit of~\eqref{Aeq} by using Theorem~\ref{char-limit}. Thus, we assume $(\la - \varpi_i) \not\in P_+$ in the below. We have
$$\chi_q ( \sQ_{\tJ} ( \beta ), \cO_{\sQ_{\tJ} ( \beta )} ( \la - \varpi_i ) ) = \chi_q ( \sQ ( \beta^+ ), \cO_{\sQ ( \beta^+ )} ( \la - \varpi_i ) )$$
for large enough $\beta^+ \in Q^{\vee}$ such that $\beta = [\beta^+]_{\tJ} \in Q^{\vee}_{\tJ}$ and the map $\sQ ( \beta^+ ) \to \sQ _{\tJ} ( \beta )$ is surjective. By~\cite[Theorem 3.1]{BF14b} and~\cite{BF14c}, we further deduce
$$\gch\, H^0 ( \sQ_{\tJ} ( \beta ), \cO_{\sQ_{\tJ} ( \beta )} ( \la - \varpi_i ) ) = \gch\, H^0 ( \sQ ( \beta^+ ), \cO_{\sQ ( \beta^+ )} ( \la - \varpi_i ) ) = 0$$
for sufficiently dominant and large enough $\beta^+ \in Q^{\vee}$. By Theorem~\ref{char-limit}, we obtain
\begin{equation}
0 = \gch \, H^0 ( \bQ(e),\cO_{\bQ(e)} ( \la - \varpi_i ) ) = \sum_{v \in W_\af} a_{i,w}^v \gch \, H^0 ( \bQ(v),\cO_{\bQ(v)} ( \la ) ),\label{e-case}
\end{equation}
by taking the limit $\beta \to \infty$ in~\eqref{Aeq} for $\tJ = \emptyset$. This proves the assertion for the case $w=e$. Now we apply the Demazure operators (\cite[Proposition~6.3]{KNS17}) and the translation of $\bI$-orbits via the right $Q^{\vee}$-action on $\bQ^\ra$ to both sides of~\eqref{e-case} to obtain the case of general $w \in W_\af$ (cf.~\cite{Kat18}).
\end{proof}

We have a surjective $\C_q P$-linear morphism
\[
\phi_{\tJ}: K'_{\Gm\times H}(\bQ^{\ra}) \ni [\cO_{\bQ(w)}] \longmapsto [\cO_{\bQ_{\tJ}([w]_{\tJ})}] \in K'_{\Gm\times H}(\bQ_{\tJ}^{\ra}), \qquad w\in W_\af.
\]

\begin{thm}\label{Xi-int}
Let $\tJ \subset \tI$. For each $i \in \tJ^c$, the endomorphism $\Xi ( - \varpi_i )$ descends to an endomorphism $\Xi_{\tJ} ( - \varpi_i )$ on $K' _{\Gm \times H} ( \bQ^{\ra}_{\tJ} )$ through $\phi_{\tJ}$. In addition, the map $\phi_{\tJ}$ induces a surjective $\C P$-module map $K' _H ( \bQ^{\ra} ) \to K' _H ( \bQ_{\tJ}^{\ra} )$ such that $\Xi ( - \varpi_i )$ induces an endomorphism of $K' _{H} ( \bQ_{\tJ}^{\ra} )$. 
\end{thm}

\begin{proof}
Consider the $\C _q P$-linear map defined by
\begin{align*}
K' _{\Gm \times H} & ( \bQ^{\ra} ) \ni \sum_{w \in W _\af} a_{w} [\cO_{\bQ ( w )}] \\
\mapsto & \left( \la \mapsto \sum_{w} a_{w} \gch \, H ^0 ( \bQ^{\ra}, \cO_{\bQ ( w )} ( \la ) ) \right) \in \mathrm{Fun}_{P_\tJ} ( \bC_q P ).
\end{align*}
By Theorem~\ref{coh-si}, this map factors through $K' _{\Gm \times H} ( \bQ^{\ra}_{\tJ} )$ as $[\cO_{\bQ ( w ) }] \mapsto [\cO_{\bQ_{\tJ} ( [w]_{\tJ} ) }]$ for $w \in W_\af$. By Theorem~\ref{qKcomp}, each $\Xi ( - \varpi_i )$ ($i \in \tI$) preserves $K' _{\Gm \times H} ( \bQ^{\ra} )$. In view of Theorem~\ref{ch-inj} and Corollary~\ref{exact}, the endomorphism $\Xi ( - \varpi_i )$ on $K' _{\Gm \times H} ( \bQ^{\ra} )$ descends to an endomorphism of $K' _{\Gm \times H} ( \bQ^{\ra}_{\tJ} )$ for each $i \in \tJ^c$ via the map $\phi_{\tJ}$. By specializing to $q = 1$ (that is possible since the coefficients are in $\C_q$), we conclude that $\phi_{\tJ}$ induces a $\C P$-module surjection $K' _H ( \bQ^{\ra} ) \to K' _H ( \bQ_{\tJ}^{\ra} )$ on which $\Xi ( - \varpi_i )$ descends to an endomorphism.
\end{proof}

By abuse of notation, we denote the surjective map $K' _H ( \bQ^{\ra} ) \to K'_H ( \bQ_{\tJ}^{\ra} )$ in Theorem~\ref{Xi-int} by $\phi_{\tJ}$. We also denote the $q=1$ specializations of the automorphisms $\Xi ( - \varpi_i )$ and $\Xi_{\tJ} ( - \varpi_i )$ in Theorem~\ref{qKcomp} by the same symbols.

\begin{cor}\label{Kcomp}
As the $q = 1$ specialization of Theorem~\ref{qKcomp}, we have a $\C P$-module isomorphism
$$\Psi_{\tJ} : qK _{H} ( \sB_{\tJ} )_\lo \stackrel{\cong}{\longrightarrow} K'_{H} ( \bQ_{\tJ}^{\ra} )$$
such that
\begin{enumerate}
\item $\Psi_{\tJ} ( [\cO_{\sB_{\tJ}( u )} ] Q^{\beta} ) = [\cO_{\bQ_{\tJ} ( u t_{\beta} )}]$ for each $u \in W / W_\tJ$ and $\beta \in Q^{\vee}_{\tJ}$;
\item $\Psi_{\tJ} (  [\cO_{\sB_{\tJ}} ( - \varpi_i )] \star \bullet ) = \Xi_{\tJ} ( - \varpi_i ) ( \Psi_{\tJ} ( \bullet ) )$ for each $i \in \tJ^c$.
\end{enumerate}
\end{cor}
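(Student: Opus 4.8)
The plan is to read off $\Psi_{\tJ}$ as the literal $q=1$ specialization of the isomorphism $\Psi_{\tJ,q}$ produced by Theorem \ref{qKcomp}. Since $\Psi_{\tJ,q}$ is an isomorphism of $\C_q P$-modules, I would apply the base change $\C P \otimes_{\C_q P}(-)$ along the ring homomorphism $\C_q P \to \C P$ sending $q \mapsto 1$; the result is automatically an isomorphism of $\C P$-modules, so the only work is to identify its source and target and to check that properties (1) and (2) survive the specialization.

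First I would identify the target: by definition, $K'_{H}(\bQ_{\tJ}^{\ra})$ \emph{is} the $q=1$ specialization $\C P \otimes_{\C_q P} K'_{H\times\Gm}(\bQ_{\tJ}^{\ra})$, so there is nothing to do here. Next, for the source, $(\ref{qKdef})$ together with the definition of $qK_{H\times\Gm}(\sB_{\tJ})$ gives $\C P \otimes_{\C_q P} qK_{H\times\Gm}(\sB_{\tJ}) = K_H(\sB_{\tJ}) \otimes \C Q^{\vee}_{\tJ,+} = qK_H(\sB_{\tJ})$; because the multiplicative set $\{Q^{\beta}\}_{\beta \in Q^{\vee}_{\tJ,+}}$ used to build the localization contains no $q$, the base change commutes with localization, so $\C P \otimes_{\C_q P} qK_{H\times\Gm}(\sB_{\tJ})_{\lo} = qK_H(\sB_{\tJ})_{\lo}$. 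Property (1) then passes verbatim, since the classes $[\cO_{\sB_{\tJ}(u)}]Q^{\beta}$ and $[\cO_{\bQ_{\tJ}(ut_{\beta})}]$ are defined over the respective coefficient rings independently of $q$.

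For property (2), the two inputs are Theorem \ref{smult} and the last sentence of Theorem \ref{Xi-int}. The first says that the shift operator $A_i(q)$ is regular at $q=1$ and that $A_i(1)$ is the operator of $\star$-multiplication by $[\cO_{\sB_{\tJ}}(-\varpi_i)]$ on $qK_H(\sB_{\tJ})$, hence on $qK_H(\sB_{\tJ})_{\lo}$; the second says that $\Xi_{\tJ}(-\varpi_i)$ at $q=1$ is exactly the endomorphism of $K'_H(\bQ_{\tJ}^{\ra})$ named in the statement. Thus specializing the relation $\Psi_{\tJ,q}(A_i(\bullet)) = \Xi_{\tJ}(-\varpi_i)(\Psi_{\tJ,q}(\bullet))$ of Theorem \ref{qKcomp}(2) yields $\Psi_{\tJ}([\cO_{\sB_{\tJ}}(-\varpi_i)]\star\bullet) = \Xi_{\tJ}(-\varpi_i)(\Psi_{\tJ}(\bullet))$, which is (2).

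The one point that deserves care — and the closest thing to an obstacle, since this is after all a corollary — is that the $q=1$ specialization must commute with the operators in play: $A_i(q)$ and $\Xi_{\tJ}(-\varpi_i)$ have to preserve the \emph{uncompleted} localized modules (not just their completions) for their $q=1$ reductions on those modules to be unambiguous. This is precisely what \cite{ACT18} (through Theorem \ref{smult}) and Theorem \ref{Xi-int} provide, and it is already incorporated into the statement of Theorem \ref{qKcomp}, so invoking those results is all that is needed.
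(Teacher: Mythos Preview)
Your proposal is correct and follows essentially the same approach as the paper: specialize Theorem \ref{qKcomp} at $q=1$ and invoke Theorem \ref{smult} to identify $A_i(1)$ with $[\cO_{\sB_{\tJ}}(-\varpi_i)]\star$. The paper's proof is a one-line reference to Theorem \ref{smult}, while you have simply spelled out the base-change and identification steps in more detail.
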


\begin{proof}
Taking Theorem~\ref{qKcomp} into account, it remains to observe that $A_i ( q )$ specializes to $[\cO_{\sB_{\tJ}} ( - \varpi_i )] \star$ by Theorem~\ref{smult}.
\end{proof}

We consider the subring of $qK_H ( \sB_{\tJ} )_{\ge 0} \subset qK_H ( \sB_{\tJ} )$ generated by $\C P$, $\C Q^{\vee}_{\tJ,+}$, and $\{ [\cO_{\sB_{\tJ}} ( - \varpi_i )] \, \star \}_{i \in \tJ^c}$.

\begin{lem}\label{i-ideal}
For each $i \in \tI$, the $\C_q P$-subspace $K_i^q \subset qK_{\Gm \times H} ( \sB )$ spanned by the set
$$\{ [\cO_{\sB ( u )}] Q^{\beta} - [\cO_{\sB ( us_i )}] Q^{\beta'} \mid u \in W, \beta,\beta' \in Q^{\vee}_{+}, \, \text{s.t.} \, \beta - \beta' \in \Z \al^{\vee}_i \}$$
is stable under the action of $A_j ( q )$ $(j \in \tI)$. In particular, its specialization $q = 1$ yields a $\C P$-subspace $K_i \subset qK_{H} ( \sB )$ that is stable under the $qK_H ( \sB )_{\ge 0}$-action.
\end{lem}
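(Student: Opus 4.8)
The strategy is to reduce the statement to a computation inside the localized equivariant $K$-group of the semi-infinite flag manifold, where the twist automorphisms $\Xi(-\varpi_j)$ have an explicit, combinatorially controlled effect on the Schubert basis. First I would transport the situation to $\bQ^{\mathrm{rat}}$: by Theorem \ref{qKcomp} (and the identification of $A_j(\bullet)$ with $\Xi(-\varpi_j)$ coming from its proof), the $\C_q P$-linear isomorphism $\Psi_{\emptyset,q} : qK_{H\times\Gm}(\sB)_\lo \xrightarrow{\cong} K'_{H\times\Gm}(\bQ^{\mathrm{rat}})$ carries $A_j(q)$ to $\Xi(-\varpi_j)$ and sends $[\cO_{\sB(u)}]Q^\beta$ to $[\cO_{\bQ(ut_\beta)}]$. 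So it suffices to show that the image $\Psi_{\emptyset,q}(K_i^q)$, which is the $\C_q P$-span of the differences $[\cO_{\bQ(ut_\beta)}] - [\cO_{\bQ(us_it_{\beta'})}]$ with $\beta-\beta'\in\Z\al_i^\vee$, is stable under each $\Xi(-\varpi_j)$ for $j\neq i$. (Here one uses that the right $Q^\vee$-translation and the left $W_\af$-action combine so that $ut_\beta$ and $us_it_{\beta'}$ lie in the same $\langle s_i\rangle\ltimes\Z\al_i^\vee$-coset on the appropriate side; the condition $\beta-\beta'\in\Z\al_i^\vee$ is exactly what makes the pair "differ only in the $i$-direction.")

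The key step is then an explicit analysis of $\Xi(-\varpi_j)[\cO_{\bQ(w)}]$ for $w\in W_\af$ and $j\neq i$. I would invoke the Chevalley-type / recursion formula for the line-bundle twist on semi-infinite Schubert classes — the content of \cite[Theorem 5.10]{KNS17} (together with \cite[Corollary 3.3]{Kat20}) already cited in Remark \ref{rem:ni} and in the proof of Theorem \ref{qKcomp} — which expresses $[\cO_{\bQ(w)}(-\varpi_j)]$ as a finite $\Z[q^{-1}]P$-combination of classes $[\cO_{\bQ(w')}]$ where each $w'$ is obtained from $w$ by (right) translations and (left) multiplications governed by $\al_j$ and the affine root data at index $j$ only. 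The crucial observation is that such moves in the $j$-direction commute with, and therefore preserve, the equivalence relation "$w\sim w'$ iff they agree outside the $i$-slot," because $i\neq j$: applying the same $j$-move to $ut_\beta$ and to $us_it_{\beta'}$ produces a new pair still related by left multiplication by $s_i$ and by a $\Z\al_i^\vee$-shift of the translation parts, and the coefficients in the two expansions match term by term. Concretely, I would show that for fixed $j\neq i$ the map $\Xi(-\varpi_j)$ is "block-diagonal" with respect to the decomposition of the index set $W/\langle s_i\rangle \times (Q^\vee/\Z\al_i^\vee)$, which immediately gives stability of $K_i^q$ since $K_i^q$ is precisely the span of the "difference vectors" inside each block.

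Finally I would specialize $q=1$: Theorem \ref{smult} (via the proof of Corollary \ref{Kcomp}) says $A_j(1)$ is $\star$-multiplication by $[\cO_{\sB}(-\varpi_j)]$, so the $q=1$ image $K_i\subset qK_H(\sB)$ is stable under all $[\cO_{\sB}(-\varpi_j)]\star$ with $j\neq i$; it is visibly stable under multiplication by $\C P$ and by $Q^\beta$ ($\beta\in Q^\vee_+$), since multiplying a difference $[\cO_{\sB(u)}]Q^\beta - [\cO_{\sB(us_i)}]Q^{\beta'}$ by $Q^\gamma$ or by a character preserves both the "same $W/\langle s_i\rangle$-class" and the "$\Z\al_i^\vee$-congruence of translation parts" conditions. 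Since $qK_H(\sB)_{\ge0}$ is generated by $\C P$, $\C Q^\vee_+$, and the operators $[\cO_{\sB}(-\varpi_j)]\star$ with $j$ ranging over \emph{all} of $\tI$ — but only $j\neq i$ is needed to move between distinct elements, and for $j=i$ one must check separately that $[\cO_{\sB}(-\varpi_i)]\star$ preserves $K_i$; this follows again from the $j=i$ case of the $\bQ^{\mathrm{rat}}$ analysis, where $\Xi(-\varpi_i)$ acts within each $\langle s_i\rangle$-$\Z\al_i^\vee$-block and hence preserves the span of differences inside it — we conclude $K_i$ is a $qK_H(\sB)_{\ge0}$-submodule. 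The main obstacle is the second paragraph: one needs the precise form of the $\Xi(-\varpi_j)$-action on semi-infinite Schubert classes and a clean argument that the $j$-direction moves ($j\neq i$) genuinely decouple from the $i$-slot; verifying the coefficient-by-coefficient matching in the two parallel expansions (for $u t_\beta$ and $u s_i t_{\beta'}$) is where the real work lies, and it rests on the combinatorics of $W_\af = W\ltimes Q^\vee$ and the commutation of the root operators $e_i, f_i$ with $e_j, f_j$ for $i\neq j$ that underlies \cite[Theorem 5.10]{KNS17}.
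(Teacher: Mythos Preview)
Your first move---transporting to $K'_{H\times\Gm}(\bQ^{\mathrm{rat}})$ via $\Psi_q$ and replacing $A_j(q)$ by $\Xi(-\varpi_j)$---is exactly what the paper does. The gap is in your second paragraph: the structural claim about the Chevalley formula is false. The expansion of $[\cO_{\bQ(w)}(-\varpi_j)]$ from \cite[Theorem~5.10]{KNS17} is \emph{not} confined to ``$j$-direction moves''; it runs over reflections $s_\alpha$ for all roots $\alpha$ with $\left<\alpha^{\vee},\varpi_j\right>\neq 0$, and the root operators $e_j,f_j$ do not commute with $e_i,f_i$ when $i,j$ are adjacent. Already for $G=\mathop{SL}(3)$ with $j=1$, the product $[\cO_{\sB(s_1)}]\star[\cO_{\sB(s_1)}]$ in \S\ref{example} involves $[\cO_{\sB(s_2s_1)}]$ and $[\cO_{\sB(s_2)}]Q^{\al_1^{\vee}}$, which lie in $\langle s_2\rangle$-cosets different from that of $s_1$. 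So $\Xi(-\varpi_j)$ is not block-diagonal for the decomposition by $W/\langle s_i\rangle\times Q^{\vee}/\Z\al_i^{\vee}$, and your term-by-term matching cannot be carried out as written; the same objection applies to your $j=i$ argument. What is actually true (and sufficient) is only that $\Xi(-\varpi_j)$ descends through $\phi_{\{i\}}$, i.e.\ the off-block contributions cancel in pairs---and verifying that cancellation from the explicit formula is precisely the hard part you have not addressed.

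The paper avoids explicit formulas entirely by a characterization argument. By Theorem~\ref{coh-si}, whenever $[ut_\beta]_{\{i\}}=[us_it_{\beta'}]_{\{i\}}$ the classes $[\cO_{\bQ(ut_\beta)}]$ and $[\cO_{\bQ(us_it_{\beta'})}]$ have identical graded characters against every line bundle $\cO(\la)$ with $\la\in P_{\{i\}}$. Combined with the injectivity of Theorem~\ref{ch-inj}, this identifies $\Psi_q(K_i^q)$ as exactly the subspace of $\Psi_q(qK_{H\times\Gm}(\sB))$ annihilated by the Euler-characteristic functional restricted to $\la\in P_{\{i\}}$. Since $\Xi(-\varpi_j)$ acts on that functional by the shift $\la\mapsto\la-\varpi_j$, and $\varpi_j\in P_{\{i\}}$ precisely when $j\neq i$, stability of the kernel is immediate. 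That is the whole proof; no combinatorics of $W_\af$ enters.
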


\begin{rem}
Lemma~\ref{i-ideal} does not hold if we replace $qK_H ( \sB )$ with $K_H ( \sB )$. We set $G = \mathop{SL} ( 2 )$ (and hence $\sB=\P^1$ and $\tI = \{ 1 \}$). We have an equality $[\cO_{\sB ( s_1 )} ( - \varpi_1 )] = e^{\varpi_1} [\cO_{\sB ( s_1 )}] \in K_H ( \sB )$, which implies
\[
[\cO_{\sB} ( - \varpi_1 )] - [\cO_{\sB ( s_1 )} ( - \varpi_1 )] = e^{- \varpi_1} [\cO_{\sB}] - ( e^{\varpi_1} + e^{-\varpi_1} ) [\cO_{\sB ( s_1 )}] \not\in \C P ( [\cO_{\sB}] - [\cO_{\sB ( s_1 )}] ).
\]
In other words, the vanishing part of Theorem~\ref{coh-si} is crucial in our consideration.
\end{rem}

\begin{proof}[Proof of Lemma~\ref{i-ideal}]
By Theorem~\ref{coh-si}, elements in $\Psi_q ( K_i^q )$ vanish upon applying the functional $F$ defined in~\eqref{FJ} restricted to $\la \in ( P_{\{i\}} + \Z_{\le 0} \varpi_i )$.

Conversely, assume that $a \in \Psi_q ( qK_{\Gm \times H} ( \sB ) )$ vanishes by applying the functional $F$ restricted to $\la \in ( P_{\{i\}} + \Z_{\le 0} \varpi_i )$. Then, $F ( a )$ must also vanish on $\la \in P_{\{i\}}$. We have another finite sum
$$a \equiv \sum_{u \in W / W_{\{i\}}, \beta \in Q^{\vee}_{\{i\}}} a_{u,\beta} [\cO_{\sB (u)}] Q^\beta \mod \Psi^{-1}_q ( K_i^q ) \hskip 5mm a_{u,\beta}\in \Z [q,q^{-1}]$$
obtained from $a$ by summing up the coefficients corresponding to $w \in W_\af$ such that $[w]_{\{i\}} = u t_{\beta}$ to afford $a_{u,\beta}$. Applying Theorem~\ref{ch-inj} for $\tJ = \{i\}$ (or by Corollary~\ref{char-est}), we have that
\[
\sum_{u \in W / W_{\{i\}}, \beta \in Q^{\vee}_{\{i\}}} a_{u,\beta} \cdot \gch \, H^0 ( \bQ (u t_{\beta}), \cO_{\bQ (u t_{\beta})} ( \la ) ) = 0
\]
for every $\la \in P_{\{i\},+}$ implies $a_{u,\beta} = 0$ for every $(u,\beta)$. Thus, we have $a \in \Psi^{-1}_q ( K_i^q )$.

Therefore, $K_i^q$ is precisely the set of elements of $\Psi_q ( qK_{\Gm \times H} ( \sB ) )$ that vanish by applying the functional $F$ restricted to $\la \in ( P_{\{i\}} + \Z_{\le 0} \varpi_i )$. Hence, $K_i^q$ is stable under the action of $\{ \Xi ( - \varpi_j ) \}_{j \in \tI}$ by Corollary~\ref{exact}. It follows that the set $K_i$ is stable by the multiplication by $qK_H ( \sB )_{\ge 0}$.
\end{proof}

\subsection{Comparison between equivariant quantum $K$-groups}

The following crucial observation is due to Buch-Chaput-Mihalcea-Perrin~\cite[\S5]{BCMP17} (see also~\cite[\S1.2]{ACT18}, cf.~\cite[Lemma~4.1.3]{CKS08}):

\begin{itemize}
\item The multiplication rule of $qK_H ( \sB_{\tJ} )$ as a $\C P \otimes \C Q^{\vee}_{\tJ,+}$-algebra is completely determined by the $\star$-multiplication table of $[\cO_{\sB_{\tJ} ( s_i )}]$ for $i \in \tJ^c$.
\end{itemize}

In view of the equality (cf.~\cite[(1.3)]{Kat18c})
$$[\cO_{\sB_\tJ} ( - \varpi_i )] = e^{w_0 \varpi_i} ( [\cO_{\sB_\tJ}] - [\cO_{\sB_\tJ ( s_i )}] ) \in K_H ( \sB_{\tJ} ) \hskip 5mm i \in \tJ^c,$$
we can rephrase this as:
\begin{itemize}
\item The multiplication rule of $qK_H ( \sB_{\tJ} )$ as a $\C P \otimes \C Q^{\vee}_{\tJ,+}$-algebra is completely determined by the $\star$-multiplication table of $\cO_{\sB_{\tJ}} ( - \varpi_i )$ for $i \in \tJ^c$.
\end{itemize}

This follows because, after base change to the fraction field $\C(P\oplus Q^{\vee}_{\tJ})$, the $\C(P\oplus Q^{\vee}_{\tJ})$-algebra $\C(P\oplus Q^{\vee}_{\tJ}) \otimes_{( \C P \otimes \C Q^{\vee}_{\tJ,+} )} qK_H(\sB_{\tJ})$ is generated by
$\{[\cO_{\sB_{\tJ}}(-\varpi_i)]\star\}_{i\in \tJ^c}$; see~\cite[Remark~5.10]{BCMP17}. In other words, we have
\[
\C ( P \oplus Q^{\vee}_\tJ )\otimes_{\C P \otimes \C Q^{\vee}_{\tJ,+}} qK_H ( \sB_\tJ )_{\ge 0} = \C ( P \oplus Q^{\vee}_\tJ ) \otimes_{\C P \otimes \C Q^{\vee}_{\tJ,+}} qK_H ( \sB_\tJ )
\]
and the multiplication rule of $\{ [\cO_{\sB_\tJ} ( - \varpi_i )] \, \star \}_{i \in \tJ^c}$ on some $\C ( P \oplus Q^{\vee}_\tJ )$-basis of $\C ( P \oplus Q^{\vee}_\tJ ) \otimes_{\C P \otimes \C Q^{\vee}_{\tJ,+}} qK_H ( \sB_{\tJ} )$ determines the product structure of $qK_H ( \sB_{\tJ} )$.

\begin{thm}\label{qKquot}
We have a surjective morphism
\[
qK_H ( \sB ) \twoheadrightarrow qK_H ( \sB_{\tJ} )
\]
of commutative algebras such that the image of $[\cO_{\sB ( w )}]$ is $[\cO_{\sB_{\tJ} ( [w]_{\tJ} )}]$ for each $w \in W$, and the image of $Q^{\beta}$ is $Q^{[\beta]_{\tJ}}$ for each $\beta \in Q^{\vee}_+$.
\end{thm}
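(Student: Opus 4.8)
The plan is to construct the map by transporting the surjection $\phi_{\tJ}$ of Theorem~\ref{Xi-int} through the two instances of Corollary~\ref{Kcomp} (for $\tJ$ and for $\emptyset$), and then to upgrade the resulting $\C P$-linear map to an algebra homomorphism using the Buch--Chaput--Mihalcea--Perrin principle recalled above together with Lemma~\ref{i-ideal}.

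\emph{Construction of the map.} First I would set $\bar\phi_{\tJ}:=\Psi_{\tJ}^{-1}\circ\phi_{\tJ}\circ\Psi_{\emptyset}$, a surjective $\C P$-linear map $qK_H(\sB)_\lo\to qK_H(\sB_{\tJ})_\lo$. Unwinding Corollary~\ref{Kcomp}(1) together with Corollary~\ref{Iproj} gives the closed formula
$$\bar\phi_{\tJ}\bigl([\cO_{\sB(w)}]\,Q^{\beta}\bigr)=[\cO_{\sB_{\tJ}([w]_{\tJ})}]\,Q^{[\beta]_{\tJ}},\qquad w\in W,\ \beta\in Q^{\vee}.$$
Since $\beta\in Q^{\vee}_{+}$ forces $[\beta]_{\tJ}\in Q^{\vee}_{\tJ,+}$, this restricts to a $\C P$-linear surjection $qK_H(\sB)\to qK_H(\sB_{\tJ})$ with exactly the Schubert-basis and Novikov-monomial behaviour asserted in the theorem; surjectivity is visible because the elements $[\cO_{\sB_{\tJ}(u)}]Q^{\beta}$ ($u\in W/W_{\tJ}$, $\beta\in Q^{\vee}_{\tJ,+}$) are hit and form a $\C P$-basis of $qK_H(\sB_{\tJ})$. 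The same formula shows $\bar\phi_{\tJ}(Q^{\beta}\xi)=Q^{[\beta]_{\tJ}}\bar\phi_{\tJ}(\xi)$, so $\bar\phi_{\tJ}$ is semilinear over the coefficient homomorphism $\C P\otimes\C Q^{\vee}_{+}\to\C P\otimes\C Q^{\vee}_{\tJ,+}$ that fixes $\C P$, sends $Q_i\mapsto Q_i$ for $i\in\tJ^c$, and sends $Q_j\mapsto 1$ for $j\in\tJ$.

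\emph{Compatibility with the operators $[\cO(-\varpi_i)]\,\star$.} For $i\in\tJ^c$ I would combine Corollary~\ref{Kcomp}(2) for $\tJ$ and for $\emptyset$ with the identity $\phi_{\tJ}\circ\Xi(-\varpi_i)=\Xi_{\tJ}(-\varpi_i)\circ\phi_{\tJ}$ of Theorem~\ref{Xi-int} to obtain
$$\bar\phi_{\tJ}\circ\bigl([\cO_{\sB}(-\varpi_i)]\,\star\bigr)=\bigl([\cO_{\sB_{\tJ}}(-\varpi_i)]\,\star\bigr)\circ\bar\phi_{\tJ}\qquad(i\in\tJ^c).$$
For $j\in\tJ$ I would use $[\cO_{\sB}(-\varpi_j)]=e^{-w_0\varpi_j}\bigl([\cO_{\sB}]-[\cO_{\sB(s_j)}]\bigr)$ in $K_H(\sB)$. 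The element $[\cO_{\sB}]-[\cO_{\sB(s_j)}]=[\cO_{\sB(e)}]Q^{0}-[\cO_{\sB(s_j)}]Q^{0}$ lies in the subspace $K_j$ of Lemma~\ref{i-ideal}, and every spanning element $[\cO_{\sB(u)}]Q^{\beta}-[\cO_{\sB(us_j)}]Q^{\beta'}$ of $K_j$ (with $\beta-\beta'\in\Z\al_j^{\vee}$) is annihilated by $\bar\phi_{\tJ}$, since $[us_j]_{\tJ}=[u]_{\tJ}$ and $[\beta]_{\tJ}=[\beta']_{\tJ}$ (as $j\in\tJ$ puts $\al_j^{\vee}$ in the kernel of $[\,\cdot\,]_{\tJ}$). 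Because Lemma~\ref{i-ideal} makes $K_j$ stable under the $qK_H(\sB)_{\ge 0}$-action, $\bar\phi_{\tJ}$ annihilates $\bigl([\cO_{\sB}]-[\cO_{\sB(s_j)}]\bigr)\star\eta$, hence $[\cO_{\sB}(-\varpi_j)]\star\eta$, for every $\eta\in qK_H(\sB)_{\ge 0}$; this is consistent with $[\cO_{\sB_{\tJ}([s_j]_{\tJ})}]=[\cO_{\sB_{\tJ}(e)}]=\mathrm{id}$ and the specialization $Q_j\mapsto 1$.

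\emph{From generator-compatibility to multiplicativity, and the main obstacle.} By the Buch--Chaput--Mihalcea--Perrin observation (see \cite[\S 5]{BCMP17}, cf. \cite[Lemma 4.1.3]{CKS08}), after localizing the coefficient rings $qK_H(\sB)$ and $qK_H(\sB_{\tJ})$ are generated over them by $\{[\cO_{\sB}(-\varpi_i)]\}_{i\in\tI}$ and $\{[\cO_{\sB_{\tJ}}(-\varpi_i)]\}_{i\in\tJ^c}$ respectively, the product being determined by the action of these operators on the Schubert basis. Writing an element as a polynomial in the $[\cO_{\sB}(-\varpi_i)]$ and pushing $\bar\phi_{\tJ}$ through it with the two identities above—the $\tJ^c$-generators get intertwined, any monomial involving a $\tJ$-generator is killed—would give $\bar\phi_{\tJ}(\xi\star\eta)=\bar\phi_{\tJ}(\xi)\star\bar\phi_{\tJ}(\eta)$ after localization, and since $qK_H(\sB_{\tJ})$ is a free $\C P\otimes\C Q^{\vee}_{\tJ,+}$-module it embeds into its localization, so the identity descends and $\bar\phi_{\tJ}\colon qK_H(\sB)\to qK_H(\sB_{\tJ})$ is the desired surjective algebra homomorphism. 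I expect this last step to be the main obstacle: $\bar\phi_{\tJ}$ is a priori only $\C P$-linear and Novikov-semilinear, so its multiplicativity must be wrung out of compatibility with a generating set, and one has to keep careful track of which localizations/completions of the quantum $K$-groups are available—the BCMP presentation is cleanest after inverting the coefficients, yet the specialization $Q_j\mapsto 1$, unlike the $Q_j\mapsto 0$ of the cohomological Peterson map, does not extend to the full fraction field—and of the fact that the resulting structure-constant identities genuinely descend to the honest rings. The subtlety concentrates at the $\tJ$-generators, and it is Lemma~\ref{i-ideal}, hence ultimately the vanishing statement in Theorem~\ref{coh-si}, that makes that case work.
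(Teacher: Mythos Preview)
Your proposal is correct and follows essentially the same route as the paper. You transport $\phi_{\tJ}$ through the two isomorphisms of Corollary~\ref{Kcomp}, use Theorem~\ref{Xi-int} for the intertwining in the $\tJ^c$-directions, invoke Lemma~\ref{i-ideal} to handle the $\tJ$-directions, and close with the Buch--Chaput--Mihalcea--Perrin principle; this is exactly the paper's argument. The only cosmetic difference is that the paper phrases the $\tJ$-part as ``the kernel of $\phi_{\tJ}$ is $\sum_{i\in\tJ}K_i$, which is an ideal of $\Psi^{-1}(qK_H(\sB)_{\ge 0})$, so the quotient inherits an algebra structure'', whereas you phrase it as ``$\bar\phi_{\tJ}$ annihilates $K_j$ and $K_j$ is $qK_H(\sB)_{\ge 0}$-stable, so the $\tJ$-generators act compatibly''. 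These are two ways of saying the same thing, and your explicit caution about the localization/descent step accurately reflects the level of detail in the paper's own proof.
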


\begin{proof}
We have the following diagram of $\C P \otimes \C Q^{\vee}_{+}$-modules (with the solid arrows):
\[
\xymatrix{
K_{H}' ( \bQ (e) ) \ar[r]^{\Psi^{-1}} \ar[d]_{\phi_{\tJ}} & qK_H ( \sB ) \ar@{-->}[d]\\
K_{H}' ( \bQ_{\tJ} (e) ) \ar[r]^{\Psi_{\tJ}^{-1}} & qK_H ( \sB_{\tJ} )
}.
\]
The dashed arrow will be defined below. Moreover, the bases correspond via
$\phi_{\tJ} ( [\cO_{\bQ ( w )}] ) = [\cO_{\bQ_\tJ ( [w]_{\tJ} )}]$
for $w \in W \times Q^{\vee}_+ \subset W_\af$. The kernel of the map $\phi_{\tJ}$ is the preimage of the sum of subspaces $K_i$ (borrowed from Lemma~\ref{i-ideal}) for $i \in \tJ$. This defines an ideal of $\Psi ( qK_H ( \sB )_{\ge 0} )$. Therefore, the map $\phi_{\tJ}$ induces an \emph{a priori} $\C P$-algebra structure on
\[
\phi_{\tJ} ( \Psi ( qK_H ( \sB )_{\ge 0})) \subset K_{H}' ( \bQ_{\tJ} ( e ) ).
\]
(If $\tJ=\tI$, then $\phi_{\tJ}([\cO_{\bQ(w)}])\equiv 1$ and $\mathrm{Im}\,\phi_{\tJ}=K_H(\pt)=\C P$. In particular, the induced product on $\mathrm{Im}\,\phi_{\tJ}$ agrees with the usual product on $K_H(\pt)$, and the result follows in this case.) In view of Theorem~\ref{Xi-int}, we conclude
$$\Xi_{\tJ} ( - \varpi_i ) \circ \phi_{\tJ} = \phi_{\tJ} \circ \Xi ( - \varpi_i ) \hskip 5mm i \in \tJ^c.$$
Thus, the above observation and Corollary~\ref{Kcomp} imply that the above module map $\phi_\tJ$ induces an algebra map
$$qK_H ( \sB ) \longrightarrow qK_H ( \sB_{\tJ} )$$
with the desired properties (here we also use that both sides are algebras with respect to the $\star$-product).
\end{proof}

The $\C P$-action commutes with the actions of $A_i ( q )$ ($i \in \tJ^c$), while we have $A_i ( q ) Q^{\beta} = q^{-\left< \beta, \varpi_i \right>} Q^{\beta} A_i ( q )$ for each $i \in \tJ^c$ and $\beta \in Q^{\vee}_{\tJ,+}$ by~\cite[Theorem~A]{Kat18}. In particular, we can localize $qK_{\Gm\times H} ( \sB )_{\ge 0}$ and $qK_{\Gm\times H} ( \sB_{\tJ} )_{\ge 0}$ to the fraction field $\C ( q, P )$ with respect to the (left) $\C_q P$-action, and we can extend the (right) $\C Q^{\vee}_{\tJ,+}$-action to the $\C [\![Q^{\vee}_{\tJ,+}]\!]$-action. Since the proof of Theorem~\ref{qKquot} relies on the comparison of the basis and the actions of the operators $A_i ( q )$, the same reasoning yields the following:

\begin{cor}\label{qqKquot}
We have a surjective $\C_q P$-module morphism
\[
qK_{\Gm\times H} ( \sB ) \twoheadrightarrow qK_{\Gm\times H} ( \sB_{\tJ} )
\]
that intertwines the actions of $A_i ( q )$ $(i \in \tJ^c)$, and the image of $[\cO_{\sB ( w )}]Q^{\beta}$ is $[\cO_{\sB_{\tJ} ( [w]_{\tJ} )}]Q^{[\beta]_{\tJ}}$ for each $w \in W$ and $\beta \in Q^{\vee}_+$. \hfill $\Box$
\end{cor}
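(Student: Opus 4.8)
The plan is to re-read the proof of Theorem \ref{qKquot} while retaining the equivariant parameter $q$ throughout, instead of specialising $q = 1$ at the outset. The only genuinely new feature is that the resulting comparison map can no longer be an algebra homomorphism: for $q \neq 1$ the shift operators $A_i ( q )$ do not commute with right multiplication by the Novikov monomials (one has $A_i ( q ) Q^{\beta} = q^{- \left< \beta, \varpi_i \right>} Q^{\beta} A_i ( q )$), so the statement only records a $\C _q P$-module surjection intertwining the shift operators occurring in Theorem \ref{qKcomp}(2); apart from this, the argument is a transcription of the proof of Theorem \ref{qKquot}.

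Concretely, the first step is to assemble the square of $\C _q P$-modules
$$
\xymatrix{
qK_{H \times \Gm} ( \sB )_\lo \ar[r]^{\Psi_q} \ar@{-->}[d]_{\pi_\lo} & K'_{H \times \Gm} ( \bQ^{\mathrm{rat}} ) \ar[d]^{\phi_{\tJ}}\\
qK_{H \times \Gm} ( \sB_{\tJ} )_\lo \ar[r]^{\Psi_{\tJ,q}} & K'_{H \times \Gm} ( \bQ_{\tJ}^{\mathrm{rat}} )
}
$$
whose horizontal arrows are the $\C _q P$-module isomorphisms of Theorem \ref{qKcomp} and whose right vertical arrow is the $\C _q P$-linear surjection of Theorem \ref{Xi-int}, and to define $\pi_\lo := \Psi_{\tJ,q}^{-1} \circ \phi_{\tJ} \circ \Psi_q$. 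By construction $\pi_\lo$ is a $\C _q P$-linear surjection, and Theorem \ref{qKcomp}(1) combined with Corollary \ref{Iproj} gives $\pi_\lo ( [\cO_{\sB ( w )}] Q^{\beta} ) = [\cO_{\sB_{\tJ} ( [w]_{\tJ} )}] Q^{[\beta]_{\tJ}}$ for all $w \in W$ and $\beta \in Q^{\vee}$. Since the $\al^{\vee}_i$-component of $[\beta]_{\tJ}$ for $i \in \tJ^c$ equals that of $\beta$, while its $\al^{\vee}_j$-components for $j \in \tJ$ vanish, one has $[\beta]_{\tJ} \in Q^{\vee}_{\tJ,+}$ whenever $\beta \in Q^{\vee}_+$; hence $\pi_\lo$ restricts to a $\C _q P$-linear map $qK_{H \times \Gm} ( \sB ) \to qK_{H \times \Gm} ( \sB_{\tJ} )$, which is surjective because every $[\cO_{\sB_{\tJ} ( u )}] Q^{\gamma}$ ($u \in W / W_{\tJ}$, $\gamma \in Q^{\vee}_{\tJ,+}$) already lies in its image.

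The operator compatibility is then a short diagram chase: Theorem \ref{qKcomp}(2) (with $\tJ = \emptyset$, resp. with general $\tJ$) says $\Psi_q \circ A_i ( q ) = \Xi ( -\varpi_i ) \circ \Psi_q$, resp. $\Psi_{\tJ,q} \circ A_i ( q ) = \Xi_{\tJ} ( -\varpi_i ) \circ \Psi_{\tJ,q}$, while Theorem \ref{Xi-int} gives $\Xi_{\tJ} ( -\varpi_i ) \circ \phi_{\tJ} = \phi_{\tJ} \circ \Xi ( -\varpi_i )$; substituting these into the definition of $\pi_\lo$ yields $\pi_\lo \circ A_i ( q ) = A_i ( q ) \circ \pi_\lo$. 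That this identity descends to the uncompleted groups is, exactly as in the proof of Theorem \ref{qKquot}, controlled by Lemma \ref{i-ideal}: the kernel of $\pi_\lo$ is the preimage of $\sum_{i \in \tJ} K^q_i$, and that subspace is stable under $\C _q P$ and under the operators $A_j ( q )$ with $j \notin \tJ$, so passing to the quotient does not disturb the integral structure. The remaining cosmetic adjustments — localising the left $\C _q P$-action to $\C ( q, P )$ and extending the right $\C Q^{\vee}_{\tJ,+}$-action to $\C [\![ Q^{\vee}_{\tJ,+} ]\!]$, as discussed before the statement — are effected separately on source and target and are therefore compatible with $\pi_\lo$.

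The main obstacle I anticipate is bookkeeping rather than a new idea: since $q$ may not be set to $1$, each step of Theorem \ref{qKquot} that was a clean algebra manipulation must instead be carried out on the modules $K'_{H \times \Gm}$ and, where shift operators intervene, on their completions $K^{+}_{H \times \Gm} ( \bQ_{\tJ}^{\mathrm{rat}} )$, so the convergence of the series involved has to be rechecked exactly as in the proof of Theorem \ref{qKcomp} — using the monotonicity and boundedness estimates of \cite{Kat18d} and the injectivity of the character map in Theorem \ref{ch-inj}. Because Lemma \ref{i-ideal} and Theorems \ref{qKcomp} and \ref{Xi-int} are all formulated before the specialisation $q = 1$, this reduces to a faithful re-run of the proof of Theorem \ref{qKquot}, which is precisely what the sentence preceding the statement asserts.
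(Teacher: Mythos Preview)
Your proposal is correct and follows essentially the same approach as the paper: the paper's proof is simply the sentence preceding the statement, which asserts that the proof of Theorem \ref{qKquot} already proceeds via the basis comparison $\Psi_q, \Psi_{\tJ,q}$ and the intertwining of the $A_i(q)$-actions (Theorems \ref{qKcomp} and \ref{Xi-int}, Lemma \ref{i-ideal}), so retaining $q$ throughout yields the corollary directly. Your write-up is a faithful and more detailed unfolding of exactly this; note also that the index set ``$i \in \tJ$'' in the statement is a typo for $i \in \tJ^c$, as your argument correctly reflects.
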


\subsection{Comparison with affine Grassmannians}
In this subsection, we deal with an algebra $K_H ( \Gr )$ that can be seen as the $H$-equivariant $K$-group of the affine Grassmannian of $G$ whose product structure is given by the Pontryagin product. For background materials, see~\cite{LSS10,Kat18c}.

For $w \in W_\af^-$, we consider a formal symbol $\Gr_{w}$ and set
$$K_H ( \Gr ) := \bigoplus_{w \in W_\af^-} \C P \, [\cO_{\Gr_{w}}].$$

\begin{thm}[Lam-Schilling-Shimozono, see~\cite{Kat18c} \S1.3]\label{LSS10}
There exists a commutative algebra structure $($whose multiplication is denoted by $\odot)$ on $K_H ( \Gr )$ such that
$$[\cO_{\Gr_{w}}] \odot [\cO_{\Gr_{\beta}}] = [\cO_{\Gr_{w t_\beta}}]$$
for each $w \in W_\af^-$ and $\beta \in Q^{\vee}_<$.
\end{thm}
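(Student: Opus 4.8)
The plan is to read off the product from the Pontryagin multiplication on the (equivariant) $K$-homology of $\Gr$. Recall from \cite{LSS10,Kat18c} that $\Gr$ carries a group-like structure --- for instance through its identification with the polynomial based loop group $\Omega K$ of a maximal compact subgroup $K \subset G$ --- with an $H$-equivariant multiplication morphism $m : \Gr \times \Gr \to \Gr$, and that the Pontryagin product $\odot$ is the one induced by $m$. Since every affine Schubert variety $\Gr_{w}$ ($w \in W_\af^-$) is projective, for $w,w' \in W_\af^-$ the restriction of $m$ to $\Gr_{w} \times \Gr_{w'}$ is proper and one has
$$[\cO_{\Gr_{w}}] \odot [\cO_{\Gr_{w'}}] = ( m|_{\Gr_{w} \times \Gr_{w'}} )_{*} \, [\cO_{\Gr_{w} \times \Gr_{w'}}] \in K_H ( \Gr ),$$
with $[\cO_{\Gr_{w}}]$ denoting the class of the structure sheaf of $\Gr_{w} \subset \Gr$ (identified with the formal symbol of the same name). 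Commutativity and associativity of $\odot$ are inherited from the homotopy-commutativity of $\Omega K$ (Eckmann--Hilton), so the content of the theorem is the evaluation of $m_{*}$ on the classes indexed by $w \in W_\af^-$ and $t_\beta$ with $\beta \in Q^{\vee}_<$ (here $t_\beta \in W_\af^-$ because $\beta$ is antidominant, which legitimises the abbreviation $\Gr_\beta := \Gr_{t_\beta}$).

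The combinatorial heart of the argument is the length identity
$$\ell ( w t_\beta ) = \ell ( w ) + \ell ( t_\beta ), \hskip 5mm w \in W_\af^-, \; \beta \in Q^{\vee}_<,$$
together with $w t_\beta \in W_\af^-$. This is exactly where the strict antidominance defining $Q^{\vee}_<$ enters: a translation by a strictly antidominant coweight sits so deep in the antidominant chamber that concatenating it on the left with an arbitrary element of $W_\af^-$ produces no cancellation. I would record this as a lemma, deducing it from the alcove combinatorics of $W_\af$ (equivalently from the characterisation $x \in W_\af^- \iff \ell ( x v ) = \ell ( x ) + \ell ( v )$ for all $v \in W$), following \cite{LSS10}.

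Granting the length identity, the multiplication morphism $\mu_{w,\beta} := m|_{\Gr_{w} \times \Gr_{t_\beta}}$ factors through $\Gr_{w t_\beta}$: its image is closed, irreducible, $\bI$-stable (as $\Gr_{w}$ is $\bI$-stable and $m$ is left $\bI$-equivariant), contains the cell $\bO ( w t_\beta )$, and has dimension $\ell ( w ) + \ell ( t_\beta ) = \ell ( w t_\beta )$, hence equals $\Gr_{w t_\beta}$; moreover $\mu_{w,\beta}$ is birational onto $\Gr_{w t_\beta}$, being an isomorphism over $\bO ( w t_\beta )$ because the length-additive factorisation $w t_\beta = w \cdot t_\beta$ lifts uniquely through the open cells (a reduced-word / Bott--Samelson type analysis). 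Since affine Schubert varieties have at worst rational singularities (e.g. via their Bott--Samelson resolutions), Theorem \ref{Kol} --- applied after passing to a resolution of $\Gr_{w} \times \Gr_{t_\beta}$, with the geometric generic fibre of $\mu_{w,\beta}$ a reduced point, exactly as in the proof of Theorem \ref{Jrat} --- gives
$$( \mu_{w,\beta} )_{*} \, \cO_{\Gr_{w} \times \Gr_{t_\beta}} \cong \cO_{\Gr_{w t_\beta}}, \hskip 5mm \R^{> 0} ( \mu_{w,\beta} )_{*} \, \cO_{\Gr_{w} \times \Gr_{t_\beta}} \cong \{ 0 \}.$$
Hence $[\cO_{\Gr_{w}}] \odot [\cO_{\Gr_{\beta}}] = [\cO_{\Gr_{w t_\beta}}]$; in particular $\odot$ stays inside $\bigoplus_{v \in W_\af^-} \C P\, [\cO_{\Gr_{v}}]$ on these classes. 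An alternative, purely algebraic route is to transport the product from the $K$-theoretic affine Fomin--Stanley subalgebra of the affine nil-Hecke ring, where the same length identity yields $D_{w} \cdot D_{t_\beta} = D_{w t_\beta}$; this is the viewpoint of \cite{Kat18c}. The step I expect to be the main obstacle is the birationality assertion --- pinning down that $\mu_{w,\beta}$ surjects onto precisely $\Gr_{w t_\beta}$ with degree one --- since this is where the special behaviour of strictly antidominant translations under Pontryagin multiplication must genuinely be used; the remaining ingredients are either formal (commutativity, associativity, the pushforward formalism) or a standard invocation of rational-singularity technology.
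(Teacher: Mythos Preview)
The paper does not prove this theorem; it is quoted from \cite{LSS10} (via \cite{Kat18c} \S 1.3) without argument, so there is no proof in the paper to compare your proposal against.

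Your sketch is sound and follows the standard line. One technical caveat: the multiplication $m : \Gr \times \Gr \to \Gr$ you invoke is not a morphism of ind-schemes (the affine Grassmannian is only a topological group via $\Omega K$, not an algebraic one), so to run the pushforward computation algebraically one should replace $m$ by the convolution morphism from the twisted product $\Gr_w \, \widetilde{\times} \, \Gr_{t_\beta} \to \Gr$. The length-additivity $\ell(wt_\beta) = \ell(w) + \ell(t_\beta)$ for $w \in W_\af^-$ and $\beta \in Q^\vee_<$ then makes this map birational onto $\Gr_{wt_\beta}$ exactly as you describe, and since affine Schubert varieties have rational singularities the higher direct images vanish and the pushforward of the structure sheaf is the structure sheaf. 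Your alternative nil-Hecke route is in fact closer to how \cite{LSS10} and \cite{Kat18c} actually present the result: once the Schubert classes are matched with elements of the $K$-theoretic Peterson subalgebra, the identity is an immediate consequence of length additivity in the affine nil-Hecke ring, and no geometry is needed.
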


We call the multiplication $\odot$ of $K_H ( \Gr )$ the {\it Pontryagin product}. Theorem~\ref{LSS10} implies that the set
$$\{ [\cO_{\Gr_{\beta}}] \mid \beta \in Q^{\vee}_< \} \subset ( K_H ( \Gr )_{\lo}, \odot )$$
forms a multiplicative system. We denote by $K_H ( \Gr )_{\lo}$ its localization. The action of an element $[\cO_{\Gr_{\beta}}]$ on $K_H ( \Gr )$ in Theorem~\ref{LSS10} is torsion-free, and hence we have an embedding $K_H ( \Gr ) \hookrightarrow K_H ( \Gr )_\lo$.

\begin{thm}[\cite{Kat18c} Corollary~C]\label{KPfull}
There exists an isomorphism
$$\Phi : ( K _H ( \Gr )_\lo, \odot ) \longrightarrow ( qK_{H} ( \sB )_\lo, \star )$$
of algebras such that
$$\Phi ( [\cO_{\Gr_{ut_{\beta_1}}}] \odot [\cO_{\Gr_{t_{\beta_2}}}]^{-1} ) = [\cO_{\sB ( u )}] Q^{\beta_1 - \beta_2} \hskip 7mm u \in W, \beta_1,\beta_2 \in Q^{\vee}_<.$$
\end{thm}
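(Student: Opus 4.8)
This is \cite[Corollary C]{Kat18c} (conjectured in \cite{LLMS17}); I indicate how to obtain it within the framework of the present note, using the semi-infinite flag manifold $\bQ^{\ra} = \bQ_\emptyset^{\ra}$ as a mediator. The $\tJ = \emptyset$ case of Corollary \ref{Kcomp} already supplies a $\C P$-module isomorphism $\Psi : qK_H ( \sB )_\lo \longrightarrow K'_H ( \bQ^{\ra} )$ with $\Psi ( [\cO_{\sB ( u )}] Q^{\beta} ) = [\cO_{\bQ ( u t_\beta )}]$, intertwining $Q^\gamma \star (-)$ with the translation shift $[\cO_{\bQ ( w )}] \mapsto [\cO_{\bQ ( w t_\gamma )}]$ and $[\cO_{\sB} ( - \varpi_i )] \star (-)$ with $\Xi ( - \varpi_i )$. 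Transport the $\star$-product along $\Psi$ to endow $K'_H ( \bQ^{\ra} )$ with a commutative $\C P$-algebra structure. By the Buch--Chaput--Mihalcea--Perrin observation recalled before Theorem \ref{qKquot} (together with $[\cO_{\sB} ( - \varpi_i )] = e^{- w_0 \varpi_i} ( [\cO_{\sB}] - [\cO_{\sB ( s_i )}] )$), this is the \emph{unique} commutative $\C ( P \oplus Q^\vee )$-algebra structure on $K'_H ( \bQ^{\ra} )$ for which the $\{ [\cO_{\bQ ( u t_\beta )}] \}$ form a basis, the translation shifts are multiplication by $Q^\vee$, and each $\Xi ( - \varpi_i )$ is multiplication by the class $\Psi ( [\cO_{\sB} ( - \varpi_i )] )$.

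On the $\Gr$ side I would first produce a $\C P$-module isomorphism $K_H ( \Gr )_\lo \longrightarrow K'_H ( \bQ^{\ra} )$ sending $[\cO_{\Gr_{w}}] \mapsto [\cO_{\bQ ( w )}]$ for $w \in W_\af^-$ and extended by inverting the translation classes $[\cO_{\Gr_{\beta}}]$, $\beta \in Q^\vee_<$ (using that $W_\af = W \ltimes Q^\vee$ and that $Q^\vee_<$ generates $Q^\vee$ as a group; the completion bookkeeping is the same as in the proof of Theorem \ref{ch-inj}). Under this identification, Theorem \ref{LSS10} gives that $\odot$-multiplication by $[\cO_{\Gr_{\beta}}]$ is precisely the translation shift, hence matches $Q^\beta$. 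It then remains to check that $\odot$-multiplication by the (finitely many) K-theoretic affine Fomin--Stanley generators of $(K_H ( \Gr ), \odot)$, in the Lam--Schilling--Shimozono description (see \cite{LSS10,Kat18c}), corresponds under the identification to the operators $\Xi ( - \varpi_i )$. Both of these are determined by their action on the Schubert basis, and by Theorem \ref{ch-inj} (using the vanishing and branching of Theorem \ref{coh-si}) this action is in turn pinned down by its effect on the spaces $H^0 ( \bQ ( w ), \cO_{\bQ ( w )} ( \la ) )$; one matches the Pieri--Chevalley rule for the Pontryagin product with the explicit formula for $\Xi ( - \varpi_i )$ from \cite[Theorem 5.10]{KNS17}.

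Granting this last matching, the uniqueness clause of the first paragraph forces the $\C P$-module isomorphism of the second paragraph to intertwine $\odot$ and $\star$, which is the sought algebra isomorphism $\Phi : ( K_H ( \Gr )_\lo, \odot ) \to ( qK_H ( \sB )_\lo, \star )$; tracking $[\cO_{\Gr_{u t_{\beta_1}}}] \odot [\cO_{\Gr_{t_{\beta_2}}}]^{-1}$ through the two identifications with $K'_H ( \bQ^{\ra} )$ then yields the stated value $[\cO_{\sB ( u )}] Q^{\beta_1 - \beta_2}$. The hard part is the matching invoked just above: identifying the Pontryagin convolution product on $K_H ( \Gr )$ with the line-bundle-twist operators on the semi-infinite flag manifold is exactly where the geometric content of the $K$-Peterson isomorphism sits, and it rests on the deeper input of \cite{KNS17,Kat18c,Kat18d} --- Frobenius splitting and rationality of singularities of semi-infinite Schubert varieties, the Borel--Weil-type statements (Theorems \ref{coh-si}, \ref{char-limit}, \ref{Jrat}), and the comparison of the two Pieri--Chevalley rules --- together with careful handling of the localizations and completions so that the infinite formal sums occurring in $K_H ( \Gr )_\lo$, $K'_H ( \bQ^{\ra} )$ and $qK_H ( \sB )_\lo$ are genuinely intertwined and not merely matched coefficientwise.
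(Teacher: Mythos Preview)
The paper does not prove this statement at all: Theorem \ref{KPfull} is simply quoted as \cite[Corollary C]{Kat18c}, with no argument given here. You correctly flag this in your first sentence, so there is nothing to compare against on the paper's side.

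What you have written is therefore not a competing proof but a roadmap of how the argument of \cite{Kat18c} would be phrased in the language of the present note. As such it is accurate in its broad strokes: the mediation through $K'_H(\bQ^{\ra})$ via $\Psi$ (the $\tJ=\emptyset$ case of Corollary \ref{Kcomp}), the uniqueness principle extracted from \cite{BCMP17}, and the identification of translation classes with Novikov variables are all correct and are indeed the ingredients of \cite{Kat18c}. You are also honest that the substantive content --- matching the Pontryagin product with the line-bundle twists $\Xi(-\varpi_i)$ --- is not something you can carry out here and requires the full machinery of \cite{KNS17,Kat18c,Kat18d}. One small caution: the generators on the $\Gr$ side that \cite{Kat18c} actually compares with $\Xi(-\varpi_i)$ are not quite ``affine Fomin--Stanley generators'' in the sense of \cite{LSS10}; the comparison in \cite{Kat18c} goes via an explicit $\C P$-algebra map from $K_H(\Gr)$ to $K_H(\bQ^{\ra})$ constructed geometrically (using the factorization of semi-infinite Schubert varieties), and the Pieri--Chevalley matching is a consequence rather than the mechanism. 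But since you explicitly defer this step to the cited references, your sketch is an adequate placeholder for a result the paper itself only imports.
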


\begin{thm}\label{tcomp}
There exists a surjective $K_H(\pt)$-algebra homomorphism
$$\eta_\tJ : ( K_H ( \Gr )_\lo, \odot ) \longrightarrow ( qK _H ( \sB_{\tJ} )_\lo, \star )$$
such that
$$\eta_\tJ ( [\cO_{\Gr_{ut_{\beta_1}}}] \odot [\cO_{\Gr_{t_{\beta_2}}}]^{-1} ) = [\cO_{\sB_{\tJ} ( [u]_{\tJ} )}] Q^{[\beta_1 - \beta_2]_{\tJ}} \hskip 7mm u \in W, \beta_1,\beta_2 \in Q^{\vee}_<.$$
\end{thm}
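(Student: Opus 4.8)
The plan is to obtain $\eta_\tJ$ as the composition of the $K$-theoretic Peterson isomorphism $\Phi$ of Theorem \ref{KPfull} with a localization of the parabolic quotient map $q$ of Theorem \ref{qKquot}. The first thing I would do is promote $q \colon qK_H ( \sB ) \longrightarrow qK_H ( \sB_\tJ )$ to the respective localizations. The one point requiring care is that $q$ be compatible with the two \emph{a priori} different localizations --- at $\{ Q^\beta \}_{\beta \in Q^\vee_+}$ on the source and at $\{ Q^\gamma \}_{\gamma \in Q^\vee_{\tJ,+}}$ on the target. Here $q ( Q^\beta ) = Q^{[\beta]_\tJ}$, and the formula $[\beta]_\tJ = \beta - \sum_{j \in \tJ} \langle \beta, \varpi_j \rangle \al_j^\vee$ from Corollary \ref{Iproj} shows that $[\,\cdot\,]_\tJ$ leaves the $\al_i^\vee$-coordinates ($i \in \tJ^c$) unchanged; hence $[\beta]_\tJ \in Q^\vee_{\tJ,+}$ whenever $\beta \in Q^\vee_+$, and $[\,\cdot\,]_\tJ$ restricts to a surjection $Q^\vee_+ \longrightarrow Q^\vee_{\tJ,+}$ (any $\gamma \in Q^\vee_{\tJ,+}$ being its own preimage). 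Thus $q$ sends the multiplicative set $\{ Q^\beta \}_{\beta \in Q^\vee_+}$ onto $\{ Q^\gamma \}_{\gamma \in Q^\vee_{\tJ,+}}$, so by the universal property of localization it prolongs to a $\C P$-algebra map $q_\lo \colon qK_H ( \sB )_\lo \longrightarrow qK_H ( \sB_\tJ )_\lo$ compatible with the $\star$-products. This $q_\lo$ is surjective, because its image is a subalgebra of $qK_H ( \sB_\tJ )_\lo$ containing $qK_H ( \sB_\tJ )$ together with the inverses of all $Q^\gamma$, $\gamma \in Q^\vee_{\tJ,+}$.

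I would then set $\eta_\tJ := q_\lo \circ \Phi$. Since $\Phi \colon ( K_H ( \Gr )_\lo, \odot ) \to ( qK_H ( \sB )_\lo, \star )$ is an algebra isomorphism (Theorem \ref{KPfull}) and $q_\lo$ is a surjective algebra map, $\eta_\tJ$ is a surjective algebra map $( K_H ( \Gr )_\lo, \odot ) \to ( qK_H ( \sB_\tJ )_\lo, \star )$. To verify the displayed formula, I would evaluate it on $[\cO_{\Gr_{ut_{\beta_1}}}] \odot [\cO_{\Gr_{t_{\beta_2}}}]^{-1}$ with $u \in W$ and $\beta_1, \beta_2 \in Q^\vee_<$: by Theorem \ref{KPfull} this element maps under $\Phi$ to $[\cO_{\sB ( u )}] Q^{\beta_1 - \beta_2}$; writing $Q^{\beta_1 - \beta_2} = Q^{-\beta_2} ( Q^{-\beta_1} )^{-1}$ with $-\beta_1, -\beta_2 \in Q^\vee_+$ and using $q ( [\cO_{\sB ( u )}] ) = [\cO_{\sB_\tJ ( [u]_\tJ )}]$, $q ( Q^{-\beta_i} ) = Q^{-[\beta_i]_\tJ}$, together with the $\Z$-linearity of $[\,\cdot\,]_\tJ$, I obtain $\eta_\tJ ( [\cO_{\Gr_{ut_{\beta_1}}}] \odot [\cO_{\Gr_{t_{\beta_2}}}]^{-1} ) = [\cO_{\sB_\tJ ( [u]_\tJ )}] Q^{[\beta_1 - \beta_2]_\tJ}$, as claimed.

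The only step carrying real content is the first: checking that the single morphism $q$ of Theorem \ref{qKquot} descends compatibly through the two different Novikov localizations. Everything else is formal, including the degenerate case $\tJ = \tI$, for which $qK_H ( \sB_\tJ )_\lo = K_H ( \mathrm{pt} )$ and $\eta_\tI$ reduces to $\Phi$ followed by the point-evaluation morphism of Theorem \ref{qKquot}.
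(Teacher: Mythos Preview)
Your proposal is correct and follows exactly the approach indicated in the paper's own proof, which reads in its entirety ``Combine Theorem \ref{KPfull} with Theorem \ref{qKquot}.'' You have simply filled in the routine details the paper leaves implicit: that the quotient map of Theorem \ref{qKquot} sends the Novikov multiplicative set of $qK_H(\sB)$ onto that of $qK_H(\sB_\tJ)$ (so that it localizes), and that the resulting composite satisfies the displayed Schubert formula. One minor remark: your assertion that $-\beta_i \in Q^\vee_+$ for $\beta_i \in Q^\vee_<$ relies on the standard fact that the inverse Cartan matrix has positive entries; alternatively you could bypass this by writing $\beta_1-\beta_2$ as a difference of arbitrary elements of $Q^\vee_+$ and using the $\Z$-linearity of $[\,\cdot\,]_\tJ$.
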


\begin{proof}
Combine Theorem~\ref{KPfull} with Theorem~\ref{qKquot}.
\end{proof}

\section{Examples: $G = \mathop{SL} ( 3 )$}\label{example}
Keep the setting of the previous section with $G = \mathop{SL} ( 3 )$. We have $W = \left< s_1,s_2 \right> \cong \mathfrak S_3$, $P = \Z \varpi_1 \oplus \Z \varpi_2$, and $Q^{\vee} = \Z \al^{\vee} _1 \oplus \Z \al^{\vee} _2$. Recall that $\vartheta := \al_1 + \al_2$ and $\vartheta^{\vee} := \al_1^{\vee} + \al_2^{\vee}$. We have $w_0 = s_1 s_2 s_1 = s_2 s_1 s_2$. In this case, we have three nonempty subsets $\emptyset \neq \tJ \subset \tI = \{ 1,2 \}$. In view of~\cite[Corollary~4.20]{Kat18c}, we may consult~\cite[\S4.2]{LLMS17} (with the convention of $H$-characters twisted by $w_0$) to justify the first equality in each item. The other equalities are consistent with~\cite[\S5.5]{BM11}.

\begin{itemize}
\item We have
$$\hskip -10mm [\cO_{\sB ( s_1 )}] \star [\cO_{\sB ( s_1 )}] = ( 1 - e^{\al_2} ) [\cO_{\sB ( s_1 )}] + e^{\al_2} [\cO_{\sB}] Q^{\al_1^{\vee}} + e^{\al_2} [\cO_{\sB (s_2 s_1)}] - e^{\al_2} [\cO_{\sB (s_2)}] Q^{\al_1^{\vee}}.$$
Applying Theorem~\ref{qKquot}, we deduce
\begin{align*}
\hskip -5mm [\cO_{\sB _{\{1\}}}] \star [\cO_{\sB _{\{1\}}}] & = [\cO_{\sB_{\{1\}}}],\\
\hskip -5mm [\cO_{\sB _{\{2\}} ( s_1 )}] \star [\cO_{\sB _{\{2\}} ( s_1 )}] & = ( 1 - e^{\al_2} ) [\cO_{\sB _{\{2\}} ( s_1 )}] + e^{\al_2} [\cO_{\sB _{\{2\}} (s_2 s_1)}].
\end{align*}
\item We have $[\cO_{\sB ( s_1 )}] \star [\cO_{\sB ( s_2 )}] = [\cO_{\sB ( s_1 s_2 )}] + [\cO_{\sB ( s_2 s_1 )}] - [\cO_{\sB (w_0)}]$. From this, we deduce
\begin{align*}
\hskip -5mm [\cO_{\sB _{\{1\}}}] \star [\cO_{\sB _{\{1\}} (s_2)}] & = [\cO_{\sB_{\{1\}} (s_2)}],\\
\hskip -5mm [\cO_{\sB _{\{2\}} ( s_1 )}] \star [\cO_{\sB _{\{2\}}}] & =[\cO_{\sB _{\{2\}} ( s_1 )}].
\end{align*}
\item We have $[\cO_{\sB ( s_1 )}] \star [\cO_{\sB ( s_1 s_2 )}] = ( 1 - e^{\al_2} ) [\cO_{\sB ( s_1 s_2 )}] + e^{\al_2} [\cO_{\sB (w_0)}]$. From this, we deduce
\begin{align*}
\hskip -5mm [\cO_{\sB _{\{1\}}}] \star [\cO_{\sB _{\{1\}} ( s_1 s_2 )}] & = [\cO_{\sB _{\{1\}} ( s_1 s_2 )}],\\
\hskip -5mm [\cO_{\sB _{\{2\}} ( s_1 )}] \star [\cO_{\sB _{\{2\}} ( s_1 )}] & = ( 1 - e^{\al_2} ) [\cO_{\sB _{\{2\}} ( s_1 )}] + e^{\al_2} [\cO_{\sB _{\{2\}} (s_2 s_1)}].
\end{align*}
\item We have $[\cO_{\sB ( s_1 )}] \star [\cO_{\sB ( s_2 s_1 )}] = ( 1 - e^{\vartheta} ) [\cO_{\sB ( s_2 s_1 )}] + e^{\vartheta} [\cO_{\sB (s_2)}] Q^{\al_1^{\vee}}$. From this, we deduce
\begin{align*}
\hskip -5mm [\cO_{\sB _{\{1\}}}] \star [\cO_{\sB _{\{1\}} ( s_2 )}] & = [\cO_{\sB _{\{1\}} ( s_2 )}],\\
\hskip -5mm [\cO_{\sB _{\{2\}} ( s_1 )}] \star [\cO_{\sB _{\{2\}} ( s_2 s_1 )}] & = ( 1 - e^{\vartheta} ) [\cO_{\sB _{\{2\}} ( s_2 s_1 )}] + e^{\vartheta} [\cO_{\sB _{\{2\}}}] Q^{\al_1^{\vee}}.
\end{align*}
\item We have
$$\hskip -10mm [\cO_{\sB ( s_1 )}] \star [\cO_{\sB ( w_0 )}] = ( 1 - e^{\vartheta} ) [\cO_{\sB ( w_0 )}] + e^{\vartheta} ( [\cO_{\sB}] Q^{\vartheta^{\vee}} + [\cO_{\sB (s_1s_2)}] Q^{\al_1^{\vee}} - [\cO_{\sB(s_1)}] Q^{\vartheta^{\vee}} ).$$
From this, we deduce
\begin{align*}
\hskip -5mm [\cO_{\sB _{\{1\}}}] \star [\cO_{\sB _{\{1\}} ( s_1 s_2 )}] & = [\cO_{\sB _{\{1\}} ( s_1 s_2 )}],\\
\hskip -5mm [\cO_{\sB _{\{2\}} ( s_1 )}] \star [\cO_{\sB _{\{2\}} ( s_2 s_1 )}] & = ( 1 - e^{\vartheta} ) [\cO_{\sB _{\{2\}} ( s_2 s_1 )}] + e^{\vartheta} [\cO_{\sB _{\{2\}}}] Q^{\al_1^{\vee}}.
\end{align*}
\end{itemize}
In all cases, the above calculations recover~\cite[Corollary~10]{BCLM19} as:
$$1 \star 1 = 1 \in qK_H ( \sB_{\{1,2\}} ) \equiv qK_H ( G / G ) = K_H ( G / G ) = \C P$$
by setting $[\cO_{\sB ( w )}] \equiv 1 \equiv Q^{\al_i^{\vee}}$ ($w \in W, i = 1,2$).

\medskip

{\small
\hskip -5.25mm {\bf Acknowledgement:} The author would like to thank Mark Shimozono for helpful conversations and Leonard Mihalcea for helpful discussions. The author also expresses gratitude to Thomas Lam and Changzheng Li for their comments. This research is supported in part by JSPS KAKENHI Grant Numbers JP26287004 and JP19H01782, and by a JSPS Grant-in-Aid for Challenging Research (Exploratory) 24K21192.}

{\footnotesize
\bibliography{ref}
\bibliographystyle{hplain}}
\end{document}